\documentclass[a4paper,10pt]{elsarticle}

\usepackage{latexsym}
\usepackage{amssymb}
\usepackage{amsthm}
\usepackage{amsfonts}
\usepackage{amsmath}
\usepackage{indentfirst}
\usepackage{graphicx,subfigure}
\usepackage{mathtools}
\usepackage[usenames,dvipsnames]{xcolor}

\usepackage{tikz}
\usetikzlibrary{intersections,patterns,arrows,decorations.pathreplacing}
\usepackage{hyperref}
\usepackage{epic, eepic}

\newtheorem{theorem}{Theorem}[section]

\newtheorem{corollary}[theorem]{Corollary}
\newtheorem{lemma}[theorem]{Lemma}
\newtheorem{proposition}[theorem]{Proposition}
\newtheorem{conjecture}[theorem]{Conjecture}
\newtheorem{problem}[theorem]{Problem}

\theoremstyle{definition}
\newtheorem{definition}[theorem]{Definition}
\newtheorem{remark}[theorem]{Remark}
\newtheorem{question}[theorem]{Question}
\newtheorem{example}[theorem]{Example}
\newtheorem{algorithm}[theorem]{Algorithm}

\numberwithin{equation}{section}
\numberwithin{figure}{section}

%%%%%%%%%%%%%%%%%%%%%%%%%%%%%%%%%%%%%%%%%%%%%%%%%%%%%%%%%%%%%%%%%%%
%%% BEGIN our macros
%%%%%%%%%%%%%%%%%%%%%%%%%%%%%%%%%%%%%%%%%%%%%%%%%%%%%%%%%%%%%%%%%%%

\newcommand\clf{\mathcal{F}}
\newcommand\cle{\mathcal{E}}
\newcommand\cln{\mathcal{N}}
\newcommand\clo{\mathcal{O}}
\newcommand\cll{\mathcal{L}}
\newcommand\clt{\mathcal{T}}
\newcommand\clp{\mathcal{P}}
\newcommand\cls{\mathcal{S}}
\newcommand\clsn{\mathcal{S}_n}
\newcommand\ra{\rightarrow}

\DeclareMathOperator{\cycr}{\mathcal{R}}
\DeclareMathOperator{\donwshift}{\mathcal{S}}

\newcommand{\rlab}[1]{\textcolor{red}{\scriptsize $#1$}}
\newcommand{\blab}[1]{\textcolor{blue}{\scriptsize $#1$}}
\newcommand{\lab}[1]{{\scriptsize $#1$}}

\DeclareMathOperator{\ewpmap}{\Psi}
\DeclareMathOperator{\newpmap}{\Psi}
\DeclareMathOperator{\ewlemap}{\Delta}
\DeclareMathOperator{\leewmap}{\Delta^{-1}}

\DeclareMathOperator{\newlemap}{\Gamma}

\DeclareMathOperator{\treelemap}{T}
\DeclareMathOperator{\letreemap}{\Lambda}

\DeclareMathOperator{\desexc}{\mathrm{DE}}
\DeclareMathOperator{\tr}{\mathrm{Tr}}

\definecolor{darkgreen}{rgb}{0,0.5,0}

\newcommand\ew{{\sc\small EW}}
\newcommand\new{{\sc\small NEW}}
\newcommand\pnew{{\sc\small (N)EW}}
\newcommand\ewt{{\sc\small EW}-tableau}
\newcommand\ewts{{\sc\small EW}-tableaux}
\newcommand\lt{{\sc\small Le}-tableau}
\newcommand\lts{{\sc\small Le}-tableaux}
\newcommand\ltx{{\sc\small Le}-tableaux}
\newcommand\lets{{\sc\small Le}-tableaux}
\newcommand\newt{{\sc\small NEW}-tableau}
\newcommand\newts{{\sc\small NEW}-tableaux}
\newcommand\tlt{tree-tableau}
\newcommand\tlts{tree-tableaux}

\newcommand{\az}{all-$0$}

\newcommand{\ao}{all-$1$}

\newcommand{\EWT}{\mbox{\sc{ewt}}}
\newcommand{\TT}{\mbox{\sc{tlt}}}
\newcommand{\MRC}{\mathrm{Rec}^m}

%%%%%%%%%%%%%%%%%%%%%%%%%%%%%%%%%%%%%%%%%%%%%%%%%%%%%%%%%%%%%%%%%%%
%%%% END our macros
%%%%%%%%%%%%%%%%%%%%%%%%%%%%%%%%%%%%%%%%%%%%%%%%%%%%%%%%%%%%%%%%%%%

\begin{document}

\begin{frontmatter}

% \title{EW-tableaux and their relation to other permutation
% tableaux and the Abelian sandpile model}

\title{EW-tableaux, Le-tableaux, tree-like tableaux and the Abelian sandpile model}

\author[add]{Thomas Selig\fnref{epsrc}}
\ead{thomas.selig@strath.ac.uk}
\ead[url]{https://personal.cis.strath.ac.uk/thomas.selig}

\author[add]{Jason P. Smith\fnref{epsrc2}}
\ead{jason.p.smith@strath.ac.uk}
\ead[url]{https://personal.cis.strath.ac.uk/jason.p.smith}

\author[add]{Einar Steingr\'imsson\fnref{epsrc}}
\ead{einar@alum.mit.edu}
\ead[url]{https://personal.cis.strath.ac.uk/einar.steingrimsson}

\fntext[epsrc]{This work was supported by grant EP/M015874/1 from The Engineering and Physical Sciences Research Council.}

\fntext[epsrc2]{This work was supported by grant EP/M027147/1 from The Engineering and Physical Sciences Research Council.}

\address[add]{Department of Computer and Information Sciences, University of Strathclyde, Glasgow G1 1XH, UK}

%%%%%%%%%%%%%%%%%%%%%%%%%%%%%%%%%
\begin{abstract}
  A EW-tableau is a certain 0/1-filling of a Ferrers diagram, corresponding uniquely to an acyclic orientation, with a unique sink, of a certain bipartite graph called a Ferrers graph.   We give a bijective proof of a result of Ehrenborg and van Willigenburg showing that EW-tableaux of a given shape are equinumerous with permutations with a given set of excedances.  This leads to an explicit bijection between EW-tableaux and the much studied Le-tableaux, as well as the tree-like tableaux introduced by Aval, Boussicault and Nadeau.

We show that the set of EW-tableaux on a given Ferrers diagram are in 1-1 correspondence with the minimal recurrent configurations of the Abelian sandpile model on the corresponding Ferrers graph.

 Another bijection between EW-tableaux and tree-like tableaux, via spanning trees on the corresponding Ferrers graphs, connects the tree-like tableaux to the minimal recurrent configurations of the Abelian sandpile model on these graphs.  We introduce a variation on the EW-tableaux, which we call NEW-tableaux, and present bijections from these to Le-tableaux and tree-like tableaux.  We also present results on various properties of and statistics on EW-tableaux and NEW-tableaux, as well as some open problems on these.
\end{abstract}

\begin{keyword}
Permutation tableaux, EW-tableaux, Le-tableaux, tree-like tableaux, NEW-tableaux, Abelian sandpile model, permutation statistics.
\MSC[2010] primary 05A19; secondary 05A05 \sep 05A15 \sep 60J10.
\end{keyword}

\end{frontmatter}

%%%%%%%%%%%%%%%%%%%%%%%%%%%%%%%
\section{Introduction and preliminaries}\label{sec-intro}

This paper presents a series of results on properties of the previously little studied \emph{\ewts} described by Ehrenborg and van Willigenburg~\cite{ew-ferrers}. Most of these results arise from a bijection we introduce between these tableaux and permutations.  As a consequence of that bijection we also give a bijection between \ewts\ and the much studied \lts, as well as to the \emph{tree-like tableaux} of Aval, Boussicault and Nadeau~\cite{aval-al-tree-tableaux}.  We refer to the latter simply as \emph{\tlts}.  Another
bijection that we present between \ewts\ and \tlts\ leads to a bijection between \tlts\ 
and the minimal recurrent configurations of the Abelian sandpile model on the Ferrers graphs described in~\cite{ew-ferrers}, via acyclic orientations with a unique sink on these bipartite graphs (as these orientations are in one-to-one
correspondence with the \ewts).  That bijection uses the bijection of Cori and Le Borgne~\cite{CLB} between the spanning trees of a graph and the recurrent configurations of the Abelian sandpile model on the graph.

We also present a variation on the \ewts, which we call {\newts}. In addition, we give a number of results on how properties of these two types of tableaux manifest themselves in properties of their corresponding permutations and in properties of orientations on the Ferrers graphs they are in a natural bijection with.

In \cite[Cor.~4.5]{ew-ferrers}, Ehrenborg and van Willigenburg described a set of 2-colorings of the cells of Ferrers diagrams satisfying certain conditions (a Ferrers diagram is a two-dimensional left-aligned array of boxes, weakly decreasing in the number of boxes when going down, see Figure~\ref{fig-first}).  These colorings  have interesting properties that the authors studied in connection with other structures related to such diagrams.  In particular, the set of colorings they define are in one-to-one correspondence with acyclic orientations, with a unique sink, of certain bipartite graphs, which they call Ferrers graphs.  We define here a set of 0/1-fillings of Ferrers diagrams that are essentially equivalent to the definition of Ehrenborg and van Willigenburg, with one minor difference that we explain in Remark~\ref{remark1} below.  We refer to the tableaux arising thus by the first letters of the last names of Ehrenborg and van Willigenburg.

\begin{definition}\label{def:ewt}  A \emph{\ewt} $\clt$ is a 0/1-filling of a Ferrers diagram with the following properties:
  \begin{enumerate}
  \item\label{ewt1} The top row of $\clt$ has a $1$ in every cell.
  \item\label{ewt2} Every other row has at least one cell containing a $0$.
  \item\label{ewt3}  No four cells of $\clt$ that form the corners of a rectangle have 0s in two diagonally opposite corners and 1s in the other two.
  \end{enumerate}
  The \emph{size} of a \ewt\ is one less than the sum of its number of rows and number of columns.
\end{definition}
An example of a \ewt\ is shown in Figure~\ref{fig-first}.

\begin{remark}\label{remark1}
Ehrenborg and van Willigenburg considered colorings corresponding to tableaux that in our definition would have a unique fixed, but arbitrary, row of all 1s, and no column with all 0s.  As all our results naturally restrict to tableaux of a fixed shape, it is irrelevant where this fixed all-1s row is chosen, and as it facilitates the presentation of our
results, we choose to always make this the top row.  That implies, of course, that no column has all 0s.  The choice of this row being arbitrary is directly linked to the fact, 
proved by Greene and Zaslavsky \cite[Thm. 7.3]{greene-zaslavsky-orientations}, that the number of acyclic orientations, with a unique sink, of a graph is independent of which vertex is chosen as the unique sink. In fact, it is easy to show that given an acyclic orientation with a unique sink, one can reverse the direction of every edge in every directed path from an arbitrary other vertex $v$ to the sink to obtain an acyclic orientation where $v$ is the unique sink.
\end{remark}

We label rows and columns on the southeast border of Ferrers diagrams underlying \ewts\ with the numbers $0,1,\ldots,n$, where the top row gets label $0$ and the successive border edges get the remaining numbers in order, as shown in Figure~\ref{fig-first}.

In \cite[Cor. 4.5]{ew-ferrers}, Ehrenborg and van Willigenburg proved the following theorem, in a different but equivalent form.  That these are equivalent follows from Lemma
2.2 in \cite{ehr-stein-excset}. The \emph{excedance bottoms set} of an $n$-permutation $a_1a_2\ldots a_n$ is the set of indices $i$ such that $a_i>i$. An $n$-permutation is a permutation of the integers $[n]=\{1,2,\ldots,n\}$.

\begin{theorem}\label{numtab}
  The \ewts\ of size $n$ with row labels $0,e_1,e_2,\ldots,e_k$ are equinumerous with $n$-permutations with excedance bottoms set $\{e_1,e_2,\ldots,e_k\}$.
\end{theorem}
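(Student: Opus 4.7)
The approach will be bijective: I will construct an explicit map $\Phi$ from the set of \ewts\ of size $n$ with row labels $\{0, e_1, \ldots, e_k\}$ to the set of $n$-permutations with excedance bottoms set $\{e_1, \ldots, e_k\}$, together with an inverse $\Phi^{-1}$. Since Ehrenborg and van Willigenburg's original proof is non-bijective, the work lies in giving an explicit combinatorial correspondence.

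The first step is to unpack condition~(\ref{ewt3}) of Definition~\ref{def:ewt}. For any two rows $r, r'$ and any two columns $c, c'$ common to both, the forbidden $2 \times 2$ diagonal $0$-$1$ pattern forces the $1$-supports of the two rows to be nested along their common columns (and symmetrically for columns). This chain-like structure on rows and columns is the structural backbone of the bijection and should be proved as a preliminary lemma before defining $\Phi$.

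The map $\Phi$ would then process the tableau in a canonical order (say, column by column from right to left, or cell by cell from the bottom-right corner). Each row label $e_i \neq 0$ would be sent to a column label larger than $e_i$, determined by a canonical ``transitional'' cell of row $e_i$ given by the chain structure; this ensures $e_i$ becomes an excedance bottom of $\pi = \Phi(\clt)$. Each column label $c$ would be sent, symmetrically, to a value at most $c$, yielding a non-excedance position. The inverse $\Phi^{-1}$ would construct the Ferrers diagram dictated by the excedance bottoms set of a given $\pi$ and fill each cell $(r, c)$ with $0$ or $1$ according to the relative positions of $r$, $c$, $\pi(r)$, and $\pi^{-1}(c)$ using the same rule in reverse.

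The main obstacle is to show that $\Phi$ yields an actual permutation---that is, that the assignment uses each label in $[n]$ exactly once---and that $\Phi^{-1}$ produces a filling satisfying condition~(\ref{ewt3}). Both directions rely on the chain property established in the first step and require careful case analysis over $2\times 2$ subgrids to exclude the forbidden pattern after reconstruction. Once these two claims are in hand, verifying that $\Phi$ and $\Phi^{-1}$ are mutually inverse follows by induction on $n$, by peeling off the last row or column processed by $\Phi$ and checking compatibility with the corresponding deletion-insertion operation on permutations.
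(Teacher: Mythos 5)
Your plan has a genuine structural flaw, beyond the fact that the bijection is never actually constructed. You stipulate that each row label $e_i\neq 0$ should be \emph{sent to a column label} larger than $e_i$, and each column label to a value at most itself. But the target set consists of \emph{all} $n$-permutations with excedance bottoms set $\{e_1,\ldots,e_k\}$, and in such permutations the value at an excedance bottom need only exceed the position; it is very often another row label. Concretely, take $n=3$ with row labels $0,1,2$ and the single column labeled $3$ (the one-column Ferrers diagram with three cells). The unique permutation with excedance bottoms set $\{1,2\}$ is $231$, where position $1$ carries the value $2$, a row label. Indeed, whenever the diagram has more non-top rows than columns ($k>n-k$), a pigeonhole argument shows no injective assignment of distinct column labels to the $k$ row positions exists at all, so your scheme cannot even produce a permutation, let alone a surjection. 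The correct invariant is only $\pi(e_i)>e_i$, and any bijection must allow row positions to receive row-label values. Your preliminary ``nesting'' lemma is true (it is equivalent to the absence of directed $4$-cycles in the associated orientation, which the paper also exploits), but it does not by itself yield a map: the ``canonical transitional cell'' is left undefined, and the three acknowledged obstacles (that the assignment is a permutation, that the reconstruction satisfies condition~\ref{ewt3} of Definition~\ref{def:ewt}, and that the two maps are mutually inverse) are exactly the content of the theorem, not routine checks.

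For comparison, the paper does not prove the counting statement by a one-shot local rule. It cites the non-bijective proof of Ehrenborg and van Willigenburg and then gives a bijection in two stages: the peeling algorithm $\ewpmap$ of Definition~\ref{phidef} (alternately reading $1$-free columns in increasing order and $0$-free rows in decreasing order, well defined by acyclicity), whose key property (Lemma~\ref{desbot-lemma}) is that the row labels are precisely the \emph{descent} bottoms of $\ewpmap(\clt)$; this is then composed with the descent-to-excedance bijection $\desexc$ of Proposition~\ref{de-bij}. Note also that the paper's inverse description (Corollary~\ref{invbij}) fills the cell $(r,c)$ according to whether $r$ precedes $c$ in the intermediate permutation $\ewpmap(\clt)$ --- global order information --- not according to a local comparison of $r$, $c$, $\pi(r)$, $\pi^{-1}(c)$ in the final permutation, which is further evidence that the local rule you sketch is unlikely to be repairable as stated. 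If you want a self-contained bijective proof, the value-at-position encoding should be obtained indirectly (via descents and a Foata-type transformation, as in the paper) rather than imposed cell-by-cell.
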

The proof in \cite[Cor. 4.5]{ew-ferrers} was, however, not bijective.  One of the main results of the present paper is a bijection proving this, which leads to many new results on these tableaux, and their connections to other kinds of tableaux and other combinatorial objects. In particular, this leads to a bijection with the \lts, first introduced by Postnikov~\cite{postnikov-webs}, and then studied in a number of papers by various authors (see, for example, \cite{cort-mandel-will-koorn} and \cite{cor-nad} for
references). The \lts\ have been shown \cite{corteel-williams1, corteel-williams2} to encode the PASEP, a much studied one-dimensional lattice model in statistical mechanics, and to provide a refinement of statistics on the steady state distribution of that model.

\begin{definition}\label{def:let}
  A \emph{\lt} $\clt$ is a 0/1-filling of a Ferrers diagram, some of whose bottommost
 rows may be empty, satisfying the following properties:
  \begin{enumerate}
  \item Every column of $\clt$ has a 1 in some cell.
  \item If a cell has a 1 above it in the same column and a 1 to its left in the same row then it has a 1.
  \end{enumerate}
The \emph{size} of a \lt\ is the sum of its number of rows and number of columns.
\end{definition}
The difference in the definitions of size for \lts\ compared to \ewts\ is due to the fact that as defined both correspond to permutations whose length equals the size of the tableaux in question.

A bijection from \lts\ to permutations is essentially defined in \cite{postnikov-webs}, and explicitly described in \cite{stein-williams}, where various statistics preserved by this bijection are treated.  In particular, as was shown in \cite{postnikov-webs}, rows in a \lt\ correspond to \emph{weak excedances} in the corresponding permutation, that is, to indices $i$ in a permutation $a_1a_2\ldots a_n$ such that $a_i\ge i$.

A \emph{Ferrers graph} (see~\cite{ew-ferrers}) is a bipartite graph whose vertices of each part are labeled $t_0,t_1,\ldots,t_k$ and $b_1,b_2,\ldots,b_m$, respectively, satisfying the following conditions:
\begin{enumerate}
\item If $(t_i,b_j)$ is an edge and $r\le i$ and $s\le j$, then $(t_r,b_s)$ is also an edge.
\item Both $(t_0,b_m)$ and  $(t_k,b_1)$ are edges.
\end{enumerate}
Given a Ferrers diagram with rows labeled from top to bottom with $t_0,t_1,\ldots,t_k$ and
columns labeled with $b_1,b_2,\ldots,b_m$ from left to right,
there is a unique Ferrers graph whose vertices are labeled with the $t_i$ and $b_j$ and where $(t_i,b_j)$ is an edge if and only if the diagram has a cell in row $t_i$ and column $b_j$.  This correspondence is clearly one-to-one.

\begin{lemma}\label{4-cycle}
  If an orientation of a Ferrers graph contains a cycle, then it contains a 4-cycle.
\end{lemma}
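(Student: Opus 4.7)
My plan is to argue by contradiction using a shortest directed cycle. Let $C\colon v_0 \to v_1 \to \cdots \to v_{N-1} \to v_0$ be a directed cycle of minimum length in the orientation. Since the Ferrers graph is bipartite, $N$ must be even and at least $4$; I would assume $N \geq 6$ and derive a contradiction by producing a strictly shorter directed cycle.

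The first step is to find a convenient extremal vertex of $C$. Among all bottom vertices appearing on $C$, I would let $v_j = b_c$ be one whose column index $c$ is minimum. The Ferrers property (every edge ``shifts down-left'') then forces $(t_{a'}, b_c)$ to be an edge of the Ferrers graph for every top vertex $t_{a'}$ lying on $C$: indeed, $t_{a'}$ is adjacent in $C$ to some $b_{c''}$, so $(t_{a'}, b_{c''})$ is an edge of the graph, and since $c \leq c''$ by the minimality of $c$, condition~(1) of the definition of a Ferrers graph yields $(t_{a'}, b_c)$.

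Next, I would pick a top vertex $v_k$ of $C$ which is not one of the two cycle-neighbours of $v_j$; such a $v_k$ exists because $N \geq 6$ forces at least three top vertices to appear on $C$. By the previous step, $(v_k, v_j)$ is an edge of the Ferrers graph, and since it is not an edge of $C$, it is a genuine chord. Whichever way this chord is oriented, it combines with one of the two directed subpaths of $C$ joining $v_j$ and $v_k$ to form a directed cycle; because $v_k$ is not adjacent to $v_j$ in $C$, each such subpath has length at most $N-2$, so the resulting directed cycle has length at most $N-1 < N$, contradicting the minimality of $C$. Hence $N = 4$.

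The main obstacle is the chord-finding step: the Ferrers condition is one-sided, guaranteeing only the NW corner of any pair of edges, so I need to make a careful extremal choice to ensure that every top vertex of $C$ is connected to my pivot. Choosing $v_j$ to be the bottom vertex of minimum column index on $C$ is the key trick; once that is in place, the rest is a standard ``shortest cycle has no chord'' argument adapted to directed cycles in a bipartite orientation.
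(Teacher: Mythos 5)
Your proof is correct and rests on the same key mechanism as the paper's: the Ferrers closure condition supplies a chord of any directed cycle of length exceeding four, and a chord of a directed cycle yields a shorter directed cycle whichever way it is oriented. The only differences are cosmetic --- you phrase it as a shortest-cycle contradiction pivoting on the bottom vertex of minimum index and an arbitrary non-adjacent top vertex, while the paper iteratively shortens the cycle by two using the top vertex of maximum index and a specific chord --- so this counts as essentially the paper's argument.
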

\begin{proof}
Let $G$ be a Ferrers graph with top vertices $t_0,t_1,\ldots,t_k$ and bottom
vertices $b_1,b_2,\ldots,b_m$. Suppose an orientation $\clo$ of $G$
contains a cycle of length $2n$ for some $n>2$. Write this cycle as
$C=t_{i_1},b_{j_1},\ldots,t_{i_n},b_{j_n},t_{i_1}$ where the notation means
that edges are oriented $t_{i_q} \rightarrow b_{j_q}$ and $b_{j_q} \rightarrow t_{i_{q+1}}$, with the convention $i_{n+1} = i_1$.
Without loss of generality, we may assume that  $i_1 = \max \{ 0 \leq i \leq k 
; \, t_i \in C \}$.

Now since $\left( t_{i_1},b_{j_n} \right) $ is an edge of $G$ and $i_2 \leq     
i_1$ it follows that $\left(t_{i_2},b_{j_n}\right)$ is also an edge of     
$G$. There are two possibilities for its orientation in $\clo$.

\begin{enumerate}
\item If $\left(t_{i_2},b_{j_n}\right)$ is oriented $t_{i_2} \rightarrow b_{j_n}$, then                     
$t_{i_1},b_{j_1},t_{i_2},b_{j_n},t_{i_1}$ is a 4-cycle, and the proof is
complete.
\item If $\left(t_{i_2},b_{j_n}\right)$ is oriented $b_{j_n} \rightarrow t_{i_2}$,
 then  $C':=  t_{i_2},b_{j_2},\ldots,t_{i_n},b_{j_n},t_{i_2}$ is a cycle of length $2n-2$.
\end{enumerate}

In case 2, we simply iterate the reasoning on the cycle $C'$ until we reach a
cycle of length 4, as desired. 
\end{proof}

Since a cycle in an orientation of a Ferrers graph implies the existence of a 4-cycle, it follows that there is a one-to-one correspondence between acyclic orientations of a Ferrers graph, with a unique sink $t_0$, and \ewts\ on the corresponding Ferrers diagram, as pointed out in~\cite{ew-ferrers}.  Namely, an edge $(t_i,b_j)$ is oriented from $t_i$ to $b_j$ if and only if the cell in row $i$ and column $j$ has a 0. Thus, conditions \ref{ewt1} and \ref{ewt2} in Definition~\ref{def:ewt} correspond to $t_0$ being the unique sink, and condition \ref{ewt3} corresponds to there being no directed 4-cycles (and therefore no directed cycles at all).

Since we have a bijection from \ewts\ to permutations, and there exists a bijection between permutations and \lts, this induces a bijection between the two kinds of tableaux. However, the bijection we present here, to be described explicitly later, is constructed by first translating the permutation obtained from a \ewt, in two steps, into a different permutation.  The first translation is by the following bijection, which is a variation on the \emph{transformation fondamentale} of Foata and Sch\"utzenberger \cite{foata-schutz}.  This variation was described in the Appendix in \cite{es-indexed} (the author was not aware of this being a variation on the TF). The \emph{descent tops set} of a permutation $a_1a_2\ldots a_n$ is the set of those letters $a_i$ such that $a_i>a_{i+1}$ and the \emph{descent bottoms set} consists of those $a_{i+1}$ satisfying this.  The excedance bottoms set was defined before to be the set of indices $i$ such that $a_i>i$ and the \emph{excedance tops set} is the set of letters $a_i$ satisfying this.
\begin{proposition}\label{de-bij}
Let $\pi=a_1a_2\ldots a_n$ be an $n$-permutation.  The following defines a bijection\footnote{The name $\desexc$ refers to the map taking descent tops/bottoms to excedance tops/bottoms.} $\desexc:\clsn\ra\clsn$ such that $\desexc(\pi)=b_1b_2\ldots b_n$, where each $b_i$ is determined as follows:  Let $a_0=0$.  
\begin{enumerate}
\item If $a_i>a_j$ for some $j>i$ then $b_{a_{i+1}}=a_i$, that is, $a_i$ is then in place $a_{i+1}$ in $\desexc(\pi)$. 
\item If $a_i<a_j$ for all $j>i$, find the rightmost letter smaller than $a_i$.  If this letter is $a_k$ then $b_{a_{k+1}}=a_i$, that is, then $a_i$ is in place $a_{k+1}$ in $\desexc(\pi)$.
\end{enumerate}
The excedance bottoms set of  $\desexc(\pi)$ equals the descent bottoms set of $\pi$.
\end{proposition}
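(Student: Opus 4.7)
The plan is to establish three things in turn: that the rules unambiguously determine a permutation $\desexc(\pi) \in \clsn$, that $\desexc$ is a bijection, and that it sends descent bottoms to excedance bottoms. First I would interpret the two rules as defining a function $\tau : [n] \to [n]$ by $\tau(a_i) :=$ the position assigned to $a_i$, so that $\desexc(\pi) = \tau^{-1}$ once $\tau$ is known to be a bijection. Observe that rule 2 applies at $i$ exactly when $a_i$ is a right-to-left minimum of $\pi$, and rule 1 applies otherwise.

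Let $m_1 < m_2 < \cdots < m_s = n$ be the right-to-left minima positions of $\pi$, with conventions $m_0 = 0$ and $a_0 = 0$. The key structural claim is that for rule 2 applied at $m_j$, the rightmost $k < m_j$ with $a_k < a_{m_j}$ is $k = m_{j-1}$; indeed, no index $k' \in (m_{j-1}, m_j)$ can satisfy $a_{k'} < a_{m_j}$, since otherwise the minimum such $a_{k'}$ would itself be a right-to-left minimum strictly between $m_{j-1}$ and $m_j$, contradicting consecutivity. Combined with rule 1's assignment $\tau(a_i) = a_{i+1}$ for $i$ not a right-to-left minimum, this shows $\tau$ acts on each \emph{block} $B_j := (a_{m_{j-1}+1}, \ldots, a_{m_j})$ as the cyclic permutation $a_{m_{j-1}+1} \mapsto a_{m_{j-1}+2} \mapsto \cdots \mapsto a_{m_j} \mapsto a_{m_{j-1}+1}$. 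Since the blocks partition $\{a_1,\ldots,a_n\} = [n]$, $\tau$ is a bijection and $\desexc(\pi) := \tau^{-1}$ is a well defined permutation.

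Bijectivity follows by reading off an inverse directly from this cyclic block description. Given $\beta \in \clsn$, decompose $\beta^{-1}$ into disjoint cycles; sort them by their minima in increasing order as $c_1, c_2, \ldots, c_s$; for each $c_j$, start at its minimum $m(c_j)$ and successively apply $\beta^{-1}$ until returning, obtaining a sequence whose entries after the initial $m(c_j)$ form a block $B_j$ ending with $m(c_j)$; then concatenate $B_1 B_2 \cdots B_s$ to form $\pi$. The structural analysis above ensures that the right-to-left minima of this $\pi$ are exactly the block endpoints and that its blocks correspond to the cycles of $\beta^{-1}$ in the required rotation, so $\desexc(\pi) = \beta$; the two constructions are easily verified to be mutual inverses.

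For the statistic identity, I would first show that no placement made by rule 2 can produce an excedance: if $k < i$ is rightmost with $a_k < a_i$, then every $a_{k'}$ with $k < k' < i$ satisfies $a_{k'} > a_i$, and in particular $a_{k+1} > a_i$, so the placement $b_{a_{k+1}} = a_i$ lies strictly below its position. Hence every excedance of $\desexc(\pi)$ comes from rule 1 as $b_{a_{i+1}} = a_i > a_{i+1}$, corresponding to a descent of $\pi$ at position $i$ with descent bottom $a_{i+1}$. Conversely, every descent of $\pi$ at $i$ forces rule 1 (since $a_{i+1}$ itself witnesses a smaller later value), yielding the excedance bottom $a_{i+1}$ in $\desexc(\pi)$. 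The two sets therefore coincide. The main obstacle throughout is the structural lemma identifying $k$ with $m_{j-1}$ in rule 2 at $m_j$: once this is established, both bijectivity and the statistic correspondence follow with little additional effort.
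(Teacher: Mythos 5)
Your argument is correct. Note that the paper does not actually prove Proposition~\ref{de-bij}: it presents the map as a variation of the \emph{transformation fondamentale} of Foata and Sch\"utzenberger and defers the proof to the cited references. Your route is exactly the classical argument for that transformation: rule~2 at a right-to-left minimum sends it to the first letter of its block (the stretch since the previous right-to-left minimum), so $\tau$, and hence $\desexc(\pi)=\tau^{-1}$, is the permutation whose cycles are these blocks, from which well-definedness, the explicit inverse (cycles written starting after their minima, sorted by minima), and the descent-bottom/excedance-bottom correspondence all follow as you describe. One small imprecision: when the rightmost letter smaller than $a_i$ is $a_{i-1}$ (that is, $k+1=i$), the range $k<k'<i$ is empty and rule~2 places $a_i$ at position $a_i$, a fixed point rather than a value strictly below its position; since fixed points are not excedances, your conclusion that rule~2 never produces an excedance is unaffected.
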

As an example, let $\pi=361542$.  Then $\desexc(\pi)=641523$: Since 3 has a smaller letter  to its right 3 goes to the place determined by the letter following~3, namely~6.  Since 1 has no smaller letter to its right we find the rightmost letter smaller than it, which is defined to be $a_0=0$, so 1 goes to place $a_1=3$, etc.

The second transformation, which we apply to the image of $\desexc$, is the cyclic right shift $\cycr$, that is, the bijection that maps  $a_1a_2\ldots a_n$ to $a_na_1a_2\ldots a_{n-1}$. The reason for this is that we can associate various properties of a \ewt~$\clt$ to properties of $\sigma=\desexc(\ewpmap(\clt))$, where $\ewpmap$ is the bijection taking \ewts\ to permutations, to be defined in the next section.  By right shifting $\sigma$ we turn excedances into weak excedances, in addition to 1 always becoming a weak excedance bottom when we move $a_n$, never an excedance top, to the first place, so we get exactly one more weak excedance in $\cycr(\pi)$ than we have excedances in $\pi$.

The paper is organized as follows:  In Section~\ref{sect-bij-ew-perm} we describe our bijection from \ewts\ to permutations.  In Section~\ref{sec-newt} we define \newts\ and describe some properties of the permutations obtained from those tableaux when applying to them the bijection from \ewts\ to permutations. We describe a bijection between \ewts\ and \lts\ in Section~\ref{sec:bij}, a bijection between \newts\ and \lts\ in Section~\ref{sect-bij-new-le} and a bijection between \ewts\ and \tlts\ in Section~\ref{sec-bij-new-tree}, the last of which can be easily modified to become a bijection between \newts\ and \tlts. In Section~\ref{sec:tableaux_trees_ASM} we present a connection between \tlts\ and certain spanning trees of the underlying Ferrers graph, and a connection between \ewts\ and minimal recurrent configurations of the Abelian sandpile model, and use this to give a new bijection between \tlts\ and \ewts, which is different from the bijection in Section~\ref{sec-bij-new-tree}.  Finally, in Section~\ref{sect-stats} we give several results on properties and statistics of \ew- and \newts\ and their corresponding permutations, and a few open problems on these. 

In a forthcoming paper \cite{dsss-rec-configs-asm}, we describe a certain ``decoration'' of the \ewts\ and their corresponding permutations.  There is a one-to-one correspondence between these decorated objects and all recurrent configurations of the Abelian sandpile model on Ferrers graphs. This complements and extends the work of Dukes and Le Borgne \cite{dukes-leborgne-polyominoes} (see also \cite{aval-dukes-al-operators}) on the sandpile model on the complete bipartite graph, which is the special case of Ferrers graphs corresponding to rectangular Ferrers diagrams.

\section{The bijection from \ewts\ to permutations}\label{sect-bij-ew-perm}

We now define a map $\ewpmap$ from \ewts\ of size $n$ to $n$-permutations, which we then show to be well defined, and subsequently that it is a bijection.  See Figure~\ref{fig-first} and Example~\ref{ex-ewt-perm} for an example of this.

\begin{definition}\label{phidef}
  Given a \ewt\ $\clt$, delete all entries from its top row and disregard the label 0 in what follows.  We construct a permutation $\pi=\ewpmap(\clt)$ by repeating the following two steps until all entries of~$\clt$ have been deleted:
\begin{enumerate}
\item\label{phi1} Write down labels of columns with no 1s, one after the other from right to left, and delete the entries from each of these columns.
\item\label{phi2} Write down labels of rows with no 0s, one after the other from bottom to top, and delete the entries from each of these rows.
\end{enumerate}
\end{definition}

\begin{figure}[h]
  \centering
  \begin{tikzpicture}[scale=0.35]
\begin{scope}[shift={(-2,0)}]
    \draw[step=2cm,thick] (0,2) grid (8,8);
    \draw[step=2cm,thick] (6,6) grid (10,8);
    \draw[step=2cm,thick] (0,0) grid (2,2);
    \node at (1,7) {$1$};
    \node at (3,7) {$1$};
    \node at (5,7) {$1$};
    \node at (7,7) {$1$};
    \node at (9,7) {$1$};
    \node at (1,5) {$0$};
    \node at (3,5) {$1$};
    \node at (5,5) {$0$};
    \node at (7,5) {$0$};
    \node at (1,3) {$0$};
    \node at (3,3) {$1$};
    \node at (5,3) {$0$};
    \node at (7,3) {$1$};
    \node at (1,1) {$0$};
   \node at (10.5,7) {\lab{0}};
    \node at (9,5.5) {\lab{1}};
    \node at (8.5,5) {\lab{2}};
    \node at (8.5,3) {\lab{3}};
    \node at (7,1.5) {\lab{4}};
    \node at (5.1,1.5) {\lab{5}};
    \node at (3.1,1.5) {\lab{6}};
    \node at (2.5,.7) {\lab{7}};
    \node at (1,-.5) {\lab{8}};
  \end{scope}    

\begin{scope}[shift={(11,0)}]
    \draw[step=2cm,thick] (0,2) grid (6,8);
    \draw[step=2cm,thick] (6,6) grid (8,8);
    \draw[thick] (0,0) -- (0,2);
    \node at (1,7) {$0$};
    \node at (3,7) {$1$};
    \node at (5,7) {$0$};
    \node at (7,7) {$1$};
    \node at (1,5) {$0$};
    \node at (3,5) {$0$};
    \node at (5,5) {$1$};
    \node at (1,3) {$1$};
    \node at (3,3) {$1$};
    \node at (5,3) {$1$};
    \node at (8.5,7) {\lab{1}};
    \node at (7,5.5) {\lab{2}};
    \node at (6.5,5) {\lab{3}};
    \node at (6.5,3) {\lab{4}};
    \node at (5,1.3) {\lab{5}};
    \node at (3,1.3) {\lab{6}};
    \node at (1,1.3) {\lab{7}};
    \node at (.5,.7) {\lab{8}};
    \node at (-.5,7) {\lab{1}};
    \node at (7,8.5) {\lab{2}};
    \node at (-.5,5) {\lab{3}};
    \node at (-.5,3) {\lab{4}};
    \node at (5,8.5) {\lab{5}};
    \node at (3,8.5) {\lab{6}};
    \node at (1,8.5) {\lab{7}};
    \node at (-.5,.7) {\lab{8}};
{\color{red}
\draw[thin] (-.3,7) -- (3,7)  -- (3,3)  -- (5,3)  -- (5,1.6) ;
    \draw[thin] (5,8.2)  -- (5,4.9)  -- (6.3,4.9) ; 
}
\end{scope}    
    
\begin{scope}[shift={(22,0)}]
    \draw[step=2cm,thick] (0,2) grid (6,8);
    \draw[step=2cm,thick] (6,6) grid (8,8);
    \draw[thick] (0,0) -- (0,2);
    \node at (1,7) {$1$};
    \node at (3,7) {$0$};
    \node at (5,7) {$0$};
    \node at (7,7) {$1$};
    \node at (1,5) {$1$};
    \node at (3,5) {$1$};
    \node at (5,5) {$0$};
    \node at (1,3) {$1$};
    \node at (3,3) {$1$};
    \node at (5,3) {$1$};
    \node at (8.5,7) {\lab{1}};
    \node at (7,5.5) {\lab{2}};
    \node at (6.5,5) {\lab{3}};
    \node at (6.5,3) {\lab{4}};
    \node at (5,1.5) {\lab{5}};
    \node at (3,1.5) {\lab{6}};
    \node at (1,1.5) {\lab{7}};
    \node at (.5,.7) {\lab{8}};
\end{scope}    
\node[] at (3, -2) {$15873426$};
\node[] at (15, -2) {$51473268$};
\node[] at (26, -2) {$84536127$};
\end{tikzpicture}
  \caption{Example of a \ewt\ (left), a \lt\ (center) and a \newt, with their corresponding permutations. Shown (in red) are two of the paths in the \lt\ determining its corresponding permutation (5 in the first place, 3 in the fifth, etc.), see \cite{stein-williams}. These paths are determined as follows:  From a label $\ell$ on the top or left border, head into the tableau.  When you hit a 1, turn right if you are heading down, down if you are heading right. The label you hit when exiting the tableau is the letter in place $\ell$ in the permutation. \label{fig-first}}
\end{figure}
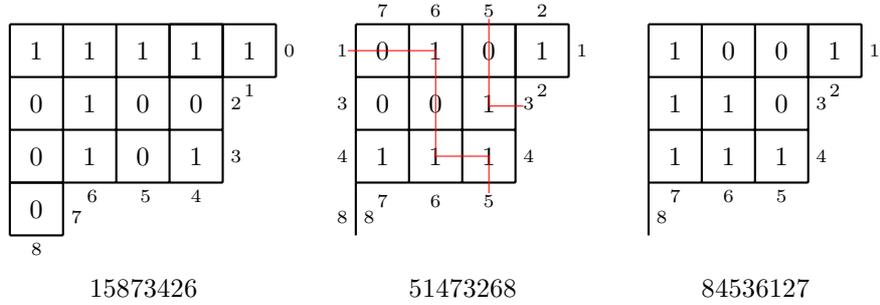

\begin{example}\label{ex-ewt-perm}
Figure~\ref{fig-first} shows an example of the bijection $\Psi$ in Definition~\ref{phidef} for the \ewt\ (on the left). Delete the entries in the top row. We first read the labels of columns with no~1s remaining, in increasing order: $1,5,8$; and delete the entries in these columns.  Then the labels of rows with no remaining~0s, in decreasing order, namely $7$ and $3$, deleting the entries in these rows, and then repeat.  The same algorithm is applied to the \newt\ (on the right), see Section~\ref{sec-newt}, except that we don't delete the top row.  In this case, however, there will be no columns without a 1, so we first read the rows 8 and 4, then column 5 and so on.
\end{example}

\begin{lemma}\label{well-def}
At each stage of the process in Definition \ref{phidef} there is an unread column with no 1s or an unread row with no 0s.
\end{lemma}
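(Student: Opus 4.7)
The plan is to translate the claim into the language of acyclic orientations of Ferrers graphs, via the correspondence recalled just after Lemma~\ref{4-cycle}. Let $\clt$ be a \ewt\ and let $\clo$ be the corresponding acyclic orientation of the Ferrers graph $G$, in which the edge $(t_i,b_j)$ is oriented $t_i \to b_j$ exactly when cell $(i,j)$ of $\clt$ contains a $0$ (and $b_j \to t_i$ when it contains a $1$), with $t_0$ the unique sink. Deleting the top row at the start of the algorithm corresponds to removing the vertex $t_0$, and each subsequent deletion of an unread column $b_j$ or unread row $t_i$ (for $i>0$) removes the corresponding vertex from the current graph. At any point in the algorithm, the unread rows and columns, together with the cells at their pairwise intersections, are exactly the induced subgraph $G'$ of $G$ on the unread vertices, equipped with the inherited orientation $\clo'$.

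The cornerstone observation is the cell-to-edge dictionary. An unread column $b_j$ has no $1$s in its remaining cells if and only if every edge of $\clo'$ incident to $b_j$ is oriented into $b_j$, i.e., $b_j$ is a sink of $\clo'$. Symmetrically, an unread row $t_i$ (with $i>0$) has no $0$s in its remaining cells if and only if $t_i$ is a sink of $\clo'$. Thus the lemma is equivalent to the following purely graph-theoretic statement: whenever the algorithm has not yet terminated, $\clo'$ has a sink.

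To finish, I would invoke the standard fact that every non-empty finite directed acyclic graph has a sink. Since $\clo$ is acyclic and $\clo'$ is obtained from $\clo$ by deleting vertices (an operation that cannot create new directed cycles), $\clo'$ is still a DAG. If $G'$ is non-empty, it therefore possesses a sink, which must be either some bottom vertex $b_j$ (yielding a column with no $1$s) or some top vertex $t_i$ with $i>0$ (yielding a row with no $0$s). This is precisely the alternative asserted by the lemma.

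I do not foresee a real obstacle: the whole argument is a single application of the DAG-sink fact once the cell-to-edge dictionary has been set up. The only point that needs brief care is the very first step, where we must note that the unique sink $t_0$ of $\clo$ is exactly what is being removed by deleting the top row (this is the reason the label $0$ is treated as exceptional in Definition~\ref{phidef}); after that initial removal, we are in the uniform situation in which every sink of the current $\clo'$ gives exactly one of the two objects the lemma asks for.
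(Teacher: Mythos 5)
Your proof is correct and is essentially the paper's argument in different packaging: both hinge on the acyclicity of the orientation associated to the \ewt\ (via Lemma~\ref{4-cycle}), the paper performing the ``follow a 0 to a 1 to a 0, ad infinitum'' walk inline to force a directed cycle, while you phrase the same content as the induced sub-orientation on unread vertices being a nonempty DAG and hence having a sink. So this matches the paper's proof in substance, with your cell-to-edge dictionary making the tableau/orientation translation explicit.
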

\begin{proof}
  If all columns have been read then no row has a 0, and if all rows have been read then no column has a 1. When there are both rows and columns unread, if each of those rows has a 0 and each of those columns has a 1, then we can go from a 0 in a row to a 1 in that 0's column, then to a 0 in that 1's row, ad infinitum, which would imply the corresponding
Ferrers graph had a directed cycle, contradicting Lemma~\ref{4-cycle} and its subsequent paragraph.
\end{proof}

Lemma \ref{well-def} shows that performing the process described in Definition \ref{phidef} we will read all the labels of rows and columns of $\clt$, and thus eventually produce an $n$-permutation.  The map $\ewpmap$ is therefore well defined.  We now show that it is a bijection, but we first need a lemma.

\begin{lemma}\label{desbot-lemma}
  When constructing the permutation $\pi=\ewpmap(\clt)$ of a \ewt\ $\clt$ by the algorithm described in Definition~\ref{phidef}, we first record a nonempty sequence of column labels in increasing order, then a nonempty sequence of row labels in decreasing order and so on, alternatingly.  Moreover, a row label is always preceded by a larger letter in $\pi$ and a column label is never preceded by a larger letter.
\end{lemma}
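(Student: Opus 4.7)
The plan is to decompose the statement into three parts: the alternation and nonemptiness of column-label and row-label \emph{blocks} (a \emph{block} being a maximal consecutive run of labels produced by a single application of Step (\ref{phi1}) or Step (\ref{phi2}) of Definition \ref{phidef}), the monotone ordering inside each block, and the inequalities at block boundaries. The alternation follows quickly from Lemma \ref{well-def}: after deleting the top row, Definition \ref{def:ewt}(\ref{ewt2}) ensures every remaining row has a $0$, so the first nonempty step must be Step (\ref{phi1}); and thereafter, once Step (\ref{phi1}) exhausts the columns having no $1$s, Lemma \ref{well-def} guarantees (if the tableau is nonempty) a row with no $0$s, and symmetrically after Step (\ref{phi2}). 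The ordering within a block is immediate from the algorithm's right-to-left and bottom-to-top conventions.

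For the boundary inequalities I would exploit the geometric observation that, for $r>0$, column $c$ has a cell in row $r$ if and only if $c > r$, since the bottom border edge of $c$ lies clockwise after the right border edge of $r$ precisely when $c$ reaches row $r$. Let $C$ be a column block immediately followed by a row block $R$, and let $l_i$ be the largest (first) row label in $R$. Row $l_i$ originally contains a $0$ by Definition \ref{def:ewt}(\ref{ewt2}), and since $l_i$ was not consumed in an earlier row block this $0$ survives to the start of $C$; hence it lies in some column $c_j \in C$, giving $c_j > l_i$, so the largest column in $C$ (the letter of $\pi$ immediately before $l_i$) exceeds $l_i$. Symmetrically, let $R$ be a row block immediately followed by a non-initial column block $C$ and let $c_j$ be the smallest (first) column label of $C$. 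Since $c_j$ is absent from every previous column block, column $c_j$ still carries a $1$ at the start of $R$; since $c_j \in C$, that $1$ is killed during $R$, placing it in some cell $(l_{i'}, c_j)$ with $l_{i'} \in R$. Thus $c_j > l_{i'} \ge l_i$, where $l_i$ is the smallest (last) row in $R$ and hence the letter of $\pi$ immediately before $c_j$.

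Together with the monotone orderings inside blocks (so that every row label within a block is immediately preceded by a larger row label, and every column label by a smaller column label), these two boundary inequalities immediately give the claim for every position in $\pi$. The main obstacle is the bookkeeping at boundaries: one must argue that when a row becomes all-ones at stage $R$, some $0$ in that row was killed precisely during the immediately preceding column block $C$ (and dually for a column becoming all-zeros during $R$), so that the witness cell exists in the right block for the geometric equivalence to yield the needed inequality.
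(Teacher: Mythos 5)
Your proposal is correct and follows essentially the same route as the paper's proof: alternation and nonemptiness of blocks via Lemma~\ref{well-def}, monotonicity within blocks from the reading conventions, and the boundary inequalities by locating a witness $0$ (resp.\ $1$) that must be deleted during the immediately adjacent column (resp.\ row) block, combined with the fact that a cell in row $r>0$ and column $c$ exists iff $c>r$ --- which the paper phrases as the row ``not being long enough'' to meet a column with a smaller label. Your explicit statement of that geometric fact and your written-out dual case (which the paper dismisses as ``analogous'') are only presentational differences.
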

\begin{proof}
  After having deleted all entries from the top row of a \ewt\ $\clt$, since no other row can have all 1s and thus be 0-free, there must be a 1-free column, by Lemma \ref{well-def}.  On the first 
pass of part~\ref{phi1} in Definition \ref{phidef}, we read all the 1-free columns, in order of increasing labels and delete their entries. Having done that, there must now be a 0-free row, and we read the labels of all the 0-free rows, in decreasing order, according to \ref{phi2} in Definition \ref{phidef}, so that if a row label $\ell$ is read immediately after another row label $m$
we must have $m>\ell$.  

If a row label $\ell$ is immediately preceded by a column label $c$, then $c$ was the last column label read on that pass of column labels, and therefore the largest of those labels.  The label $\ell$ read just after that must belong to a row that had a 0 removed on the last pass of column labels read.  Therefore, $\ell$ cannot be larger than $c$, for then it would be larger than all the column labels read on the preceding pass, implying that the row in question wasn't long enough to have had a column with a label of $c$ or smaller.  Thus, a row label is always preceded by a larger letter in $\pi=\ewpmap(\clt)$.

An argument analogous to the above shows that, except for the first column label read, a column label is always preceded by a smaller letter in $\pi=\ewpmap(\clt)$, proving the claim.
\end{proof}

The following is an immediate consequence of Lemma \ref{desbot-lemma}.
\begin{proposition}\label{prop-rows-desbot}
  The labels of rows of a \ewt\ $\clt$ are exactly all the descent bottoms in $\pi=\ewpmap(\clt)$, and thus exactly all the excedance bottoms of~\,$\desexc(\pi)$.
\end{proposition}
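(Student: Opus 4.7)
The plan is to read off Proposition~\ref{prop-rows-desbot} almost directly from Lemma~\ref{desbot-lemma}, then invoke Proposition~\ref{de-bij} for the second half.

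First I would partition the letters of $\pi = \ewpmap(\clt)$ into two classes according to whether they were recorded in step~\ref{phi1} (column labels) or step~\ref{phi2} (step-\ref{phi2}) of Definition~\ref{phidef}. Every label of $\clt$ belongs to exactly one of these classes, so it suffices to show that the step-\ref{phi2} letters (row labels) are precisely the descent bottoms of $\pi$. By the second sentence of Lemma~\ref{desbot-lemma}, every row label is immediately preceded in $\pi$ by a larger letter, so every row label is a descent bottom. Conversely, the same lemma states that no column label is immediately preceded by a larger letter (except possibly the very first column label, which, being the first letter of $\pi$, has no predecessor and therefore is trivially not a descent bottom). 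So no column label is a descent bottom, and the claimed equality of sets follows.

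For the second assertion, I would simply quote Proposition~\ref{de-bij}, which asserts that the excedance bottoms set of $\desexc(\pi)$ coincides with the descent bottoms set of $\pi$. Combining this with the first part gives that the row labels of $\clt$ are exactly the excedance bottoms of $\desexc(\pi)$.

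No serious obstacle is expected; the statement really is an immediate consequence of Lemma~\ref{desbot-lemma} together with Proposition~\ref{de-bij}. The only mildly delicate point is to notice that the very first letter of $\pi$ is a column label (since, as pointed out in the proof of Lemma~\ref{desbot-lemma}, after deleting the top row there must exist a 1-free column, so the algorithm always begins with step~\ref{phi1}) and to handle it correctly in the converse direction described above.
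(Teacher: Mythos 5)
Your proposal is correct and follows the same route as the paper, which simply states the proposition as an immediate consequence of Lemma~\ref{desbot-lemma} (row labels are always preceded by a larger letter, column labels never are) combined with the equality of descent bottoms of $\pi$ and excedance bottoms of $\desexc(\pi)$ from Proposition~\ref{de-bij}. Your careful handling of the first letter of $\pi$ (the initial column label, which has no predecessor) is a fine point the paper leaves implicit but is consistent with its argument.
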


\begin{proposition}\label{prop-bij}
The map $\ewpmap$ is a bijection.
\end{proposition}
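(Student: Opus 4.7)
The plan is to construct an explicit inverse $\ewpmap^{-1}$. By Lemma \ref{desbot-lemma}, any permutation $\pi = a_1 a_2 \ldots a_n$ in the image of $\ewpmap$ decomposes canonically into alternating blocks $B_1, B_2, \ldots, B_t$, with $B_{2i-1}$ a nonempty ascending run of column labels and $B_{2i}$ a nonempty descending run of row labels; this decomposition is recoverable from $\pi$ alone because its descent bottoms are exactly the letters appearing in row blocks. Writing $b(\ell)$ for the block index of the letter $\ell$ in $\pi$, I would define $\ewpmap^{-1}(\pi)$ to be the 0/1-filling of the Ferrers diagram whose non-top row labels are the descent bottoms of $\pi$ and whose column labels are the remaining elements of $\{1,\ldots,n\}$, with the top row all $1$s and with the cell at row $\ell_r$ and column $\ell_c$ equal to $1$ if $b(\ell_r) < b(\ell_c)$, and $0$ otherwise.

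Next I would verify that this candidate is a valid \ewt. Condition \ref{ewt1} of Definition \ref{def:ewt} is immediate by construction. For condition \ref{ewt2}, a non-top row label $\ell_r$ is a descent bottom, so following its row block back to its first letter, that first letter is preceded in $\pi$ by a column label $\ell_c$ with $\ell_c > \ell_r$; the cell $(\ell_r, \ell_c)$ exists (the Ferrers shape contains a cell whenever the row label is less than the column label) and is $0$. For condition \ref{ewt3}, a forbidden rectangle having $1$s at $(r_1, c_1), (r_2, c_2)$ and $0$s at $(r_1, c_2), (r_2, c_1)$ forces the strict cycle $\pi^{-1}(\ell_{c_1}) < \pi^{-1}(\ell_{r_2}) < \pi^{-1}(\ell_{c_2}) < \pi^{-1}(\ell_{r_1}) < \pi^{-1}(\ell_{c_1})$ of positions in $\pi$, a contradiction. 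I would then show that $\ewpmap(\ewpmap^{-1}(\pi)) = \pi$ by induction on the block index: the first pass of step \ref{phi1} extracts exactly the column labels in $B_1$ (those with no earlier, smaller-valued row label in $\pi$), the subsequent first pass of step \ref{phi2} extracts $B_2$, and so on down the blocks.

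For the converse direction $\ewpmap^{-1}(\ewpmap(\clt)) = \clt$, I would apply Proposition \ref{prop-rows-desbot} to identify the row labels (and hence the shape) of both sides, after which the common cell formula pins down each entry. As a cleaner alternative, Theorem \ref{numtab} together with the bijection $\desexc$ of Proposition \ref{de-bij} and Proposition \ref{prop-rows-desbot} already shows that the two sides of $\ewpmap$ have equal cardinalities within each shape class, so injectivity alone would close the argument. I anticipate that verifying condition \ref{ewt3} will be the main obstacle; the closed position-chain above handles it, but relies essentially on the structural dichotomy from Lemma \ref{desbot-lemma} separating the letters of $\pi$ into block-monotone row labels and column labels.
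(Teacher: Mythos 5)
Your proposal is correct, but it takes a genuinely different route from the paper. The paper proves injectivity directly (two tableaux mapping to the same permutation must have the same shape by Proposition~\ref{prop-rows-desbot}, then the same 1-free columns, the same 0-free rows, and so on inductively over the passes of the algorithm), and then obtains surjectivity for free from the enumeration in Theorem~\ref{numtab}, i.e.\ from Ehrenborg and van Willigenburg's non-bijective count applied to each shape class --- which is exactly the ``cleaner alternative'' you mention at the end. Your main route instead builds an explicit two-sided inverse: fill the diagram determined by the descent bottoms according to the rule ``cell $(r,c)=1$ iff $r$ precedes $c$ in $\pi$'' (your block-index comparison is equivalent, since row and column labels never share a block), check that this filling satisfies the three conditions of Definition~\ref{def:ewt} --- your cyclic chain of positions for condition~\ref{ewt3} is exactly right --- and verify both round-trip identities by induction on the blocks. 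This is more work, but it buys something real: it does not lean on Theorem~\ref{numtab} at all, so it is self-contained and directly yields the bijective proof of that theorem, and it anticipates what the paper only states afterwards as Proposition~\ref{pro:perm_to_ewt} and Corollary~\ref{invbij}. One step you assert rather than prove is that the original tableau $\clt$ itself obeys the cell rule (needed for $\ewpmap^{-1}(\ewpmap(\clt))=\clt$); this should be spelled out, though it is quick: a row label is written only once all 0s in its row have been deleted, so every column in which it has a 0 precedes it, and a column label is written only once all 1s in its column have been deleted, so every row in which it has a 1 precedes it. Similarly, in the verification that the first column pass reads exactly $B_1$, you should note explicitly that a column label outside $B_1$ is preceded by a \emph{smaller} row label (the last letter of the row block preceding its own block), which is the same observation that drives Lemma~\ref{desbot-lemma}. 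With those two points made explicit, your argument is complete.
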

\begin{proof}
  We show first that $\ewpmap$ is an injection.  If two tableaux $\clt$ and $\cls$ yield the same permutation then they must have the same shape, by Proposition~\ref{prop-rows-desbot}, and the same set of 1-free columns (after clearing the top row).  Once the entries of these columns have been deleted in both, $\clt$ and $\cls$ must have the same 0-free rows, since the labels of the same rows will be read on the next pass of the algorithm in Definition \ref{phidef}. But that implies that these rows were identical in $\clt$ and $\cls$, since both had 0s removed from all the same columns on the first pass.  By induction this must then apply to all columns and rows.

By Lemma~\ref{numtab}, applied to all possible sets of row labels, the set of all \ewts\ of size $n$ is equinumerous with $n$-permutations,
and thus injectivity of $\ewpmap$ implies that it is also surjective.
\end{proof}

It is straightforward, from the algorithm in Definition~\ref{phidef}, to verify the following.

\begin{proposition}\label{pro:perm_to_ewt}
  The following describes how to construct the inverse image~$\clt$ under $\ewpmap$ of a permutation $\pi=a_1a_2\ldots a_n$:

The set of descent bottoms of $\pi$, together with $n$,  determines the shape of $\clt$ and the labels of rows and columns. Fill the top row of $\clt$ with 1s. Now, since the letter $a_1$ is the first letter of the permutation and thus not a descent bottom, it is necessarily the label of a column; fill the empty cells of that column with~0s.  If $a_2$ labels a column, fill the empty cells of that column with 0s, otherwise fill the empty cells of the row labeled by $a_2$ with 1s.  Now repeat this for each~$a_i$.
\end{proposition}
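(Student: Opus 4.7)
The plan is to verify the proposition in two stages: first establish that the shape of $\clt$ and its row/column labels are determined by $\pi$, and then show by induction that the described fill-in procedure correctly reproduces each cell of $\clt=\ewpmap^{-1}(\pi)$.

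For the shape, the plan is to invoke Proposition~\ref{prop-rows-desbot} directly: the row labels of $\clt$ other than $0$ are exactly the descent bottoms of $\pi$. Since the size $n$ equals the number of non-top rows plus the number of columns, the labels $\{1,\ldots,n\}$ partition into row labels and column labels, so specifying the row labels forces the column labels to be the complementary subset. The labeling convention along the SE border then determines the Ferrers diagram uniquely from which labels correspond to rows and which to columns.

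For the filling, the plan is to argue by induction on $i$ that, after processing $a_1,\ldots,a_i$, the cells filled by the reconstruction are precisely those deleted during the first $i$ reading steps of $\ewpmap$ applied to $\clt$, with values agreeing with those in $\clt$. The base case is the top row of $1$s, which matches the initial clearing of the top row in Definition~\ref{phidef}. For the inductive step, suppose $a_i$ is a column label; in the forward algorithm, when column $a_i$ is read it contains no $1$s, so every cell of column $a_i$ whose value in $\clt$ is $1$ must have its row label appearing earlier than $a_i$ in $\pi$. By the inductive hypothesis those $1$-cells are already filled, so the still-empty cells of column $a_i$ are exactly those whose value in $\clt$ is $0$, and filling them with $0$ is correct. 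The case where $a_i$ is a row label is entirely symmetric, using the fact that $1$s leave the tableau only via row deletions while $0$s leave only via column deletions.

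After $n$ steps every cell has been filled, producing a tableau whose image under $\ewpmap$ is $\pi$; by the injectivity of $\ewpmap$ established in Proposition~\ref{prop-bij}, this tableau is $\clt$. The main point to be careful about is that the sequence of column/row choices dictated by $\pi$ in the inverse construction coincides with the phases enforced by Definition~\ref{phidef}, which is exactly what Lemma~\ref{desbot-lemma} guarantees by characterising when $a_i$ is a row label versus a column label from the descent structure of $\pi$. Beyond this bookkeeping, no genuine obstacle is anticipated.
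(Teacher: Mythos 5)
Your argument is correct and is exactly the ``straightforward verification from the algorithm in Definition~\ref{phidef}'' that the paper leaves implicit: the shape comes from Proposition~\ref{prop-rows-desbot}, and the step-by-step induction matching the reconstruction's fills against the cells deleted in the forward algorithm is the intended check. One small remark: since your induction compares the constructed filling directly with $\clt=\ewpmap^{-1}(\pi)$ cell by cell, the closing appeal to injectivity of $\ewpmap$ is redundant (though harmless).
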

This, in turn, obviously leads to the following description of the inverse of~$\ewpmap$:
\begin{corollary}\label{invbij}
  Let $\pi=a_1a_2\ldots a_n=\ewpmap(\clt)$.  If $r$ is the label of a row and $c$ the label of a column in $\clt$ then the cell in row $r$ and column $c$ has a 1 if and only if~$r$ precedes $c$ in~$\pi$.
\end{corollary}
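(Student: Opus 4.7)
The plan is to read Corollary \ref{invbij} off directly from the inverse construction in Proposition \ref{pro:perm_to_ewt}. The single key observation I would make explicit is that in that construction each cell of $\clt$ is filled exactly once: either as part of the initial all-$1$s top row (label $0$), or at the unique step $i$ at which $a_i$ first matches the cell's row or column label. Each step of the procedure only touches cells that are still empty, so the value a cell ultimately carries is determined by whichever of its row label $r$ or column label $c$ is encountered first when scanning $\pi$ from left to right.

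Given this, I would fix a cell $(r,c)$ with $r$ a row label and $c$ a column label in $\clt$, and let $i$ be minimal with $a_i \in \{r,c\}$. If $a_i = r$, then row $r$ is processed before column $c$, so column $c$ has not yet been read and the cell is still empty; Proposition \ref{pro:perm_to_ewt} then places a $1$ there. If $a_i = c$, the symmetric argument places a $0$ there. Hence the cell $(r,c)$ contains a $1$ if and only if $r$ precedes $c$ in $\pi$, which is exactly the statement of the corollary.

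The one mildly anomalous case is $r = 0$, since $0$ does not literally occur in $\pi$. This is handled by the natural convention, already implicit in Proposition \ref{pro:perm_to_ewt}, that the top row is processed before any of the letters $a_1,\ldots,a_n$, so that $0$ vacuously precedes every column label $c$; consistent with this, every cell of the top row contains a $1$. I do not anticipate any substantive obstacle: the corollary is essentially a repackaging of the inverse map, and the only care required is in justifying the ``filled exactly once'' observation above, which is immediate from the wording of Proposition \ref{pro:perm_to_ewt}.
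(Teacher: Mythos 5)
Your proof is correct and takes essentially the same route as the paper, which presents Corollary~\ref{invbij} as an immediate consequence of the inverse construction in Proposition~\ref{pro:perm_to_ewt}; your explicit observation that each cell is filled exactly once, by whichever of its row or column label occurs first in $\pi$, is precisely the step the paper leaves implicit in the word ``obviously''. Your handling of the top row ($r=0$, processed before any letter of $\pi$) is a sensible way to cover the only boundary case.
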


An \emph{up-down run} in an $n$-permutation $\pi$ is a maximal sequence of consecutive letters in $\pi$ that is either increasing or decreasing, when we prepend 0 to~$\pi$ so that the first run is always increasing. For example, $\pi=51368427$ has up-down runs 05, 51, 1368, 842, 27.  The \emph{run-decomposition} of a permutation is its sequence of
up-down runs where the first letter of each run has been removed. Thus, $\pi$ has run-decomposition 5-1-368-42-7.  The number of up-down runs in~$\pi$ is thus one greater than the sum of the numbers of peaks and valleys in $0\pi$.  The distribution of the numbers of up-down runs in permutations is found in sequence A186370 in~\cite{oeis}.  
The following lemma is now straightforward to prove, the key observation being from Lemma~\ref{desbot-lemma} that the first row label read after reading a sequence of column labels must be smaller than the last column label read, and that the first column label read after reading row labels must be greater than the last row label read.

\begin{lemma}\label{run-decomp}
The blocks in the run-decomposition $R$ of a permutation $\pi=\ewpmap(\clt)$, for a \ewt\ $\clt$, correspond to the alternating sequences of column and row labels as they are read in constructing $\pi$ from $\clt$.
\end{lemma}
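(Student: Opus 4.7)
The plan is to read the lemma off directly from Lemma~\ref{desbot-lemma}, since that lemma already identifies the alternating block structure of $\pi = \ewpmap(\clt)$ and determines the order relations across every block boundary. The only work is to match this structure against the definition of the run-decomposition; no new combinatorial content is needed.

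First, I would fix notation by writing $\pi = C_1 R_1 C_2 R_2 \ldots$, where each $C_i$ is the block of column labels read on the $i$-th execution of part~(\ref{phi1}) of Definition~\ref{phidef}, listed in increasing order, and each $R_i$ is the block of row labels read on the $i$-th execution of part~(\ref{phi2}), listed in decreasing order. By Lemma~\ref{desbot-lemma}, each of these blocks is nonempty (except possibly the last) and they alternate as written.

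Next, I would extract the two boundary inequalities explicitly from Lemma~\ref{desbot-lemma}. At every transition $C_i \to R_i$, the first letter of $R_i$ is strictly smaller than the last letter of $C_i$, because a row label is always preceded by a larger letter. At every transition $R_i \to C_{i+1}$, the first letter of $C_{i+1}$ is strictly larger than the last letter of $R_i$, because a column label is never preceded by a larger letter. Together with the fact that $C_i$ is internally increasing and $R_i$ is internally decreasing, this shows that each $C_i$ lies inside an increasing run of $0\pi$, each $R_i$ lies inside a decreasing run, the peaks of $0\pi$ are exactly the last letters of the $C_i$ blocks, and the valleys are exactly the last letters of the $R_i$ blocks (except at the very end of $\pi$).

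Finally, I would compare with the definition of the run-decomposition. Since $0$ is less than every letter of $\pi$, the first up-down run of $0\pi$ is $0$ followed by $C_1$, and removing its first letter yields $C_1$. Each subsequent up-down run consists of the shared peak or valley (the last letter of the previous block) followed by the current block, so deleting its first letter recovers exactly that block. Hence the run-decomposition of $\pi$ is precisely the sequence $C_1, R_1, C_2, R_2, \ldots$, as claimed. The only subtlety is bookkeeping for the final block, which may be either a $C_i$ or an $R_i$ and whose last letter is not a peak or valley of $0\pi$; this is easily handled since the final run simply terminates at the end of $\pi$ rather than at a peak or valley, and no letters are lost in the run-decomposition beyond the first of each run.
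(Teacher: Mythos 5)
Your proof is correct and follows essentially the same route the paper intends: the paper only sketches this lemma, pointing to Lemma~\ref{desbot-lemma} and the two boundary inequalities (first row label after a column block is smaller than the last column label; first column label after a row block is larger than the last row label), which is exactly what you extract and then match against the definition of the run-decomposition of $0\pi$. Your handling of the initial run $0C_1$ and of the final block is the right bookkeeping and introduces no gap.
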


\section{\newts}\label{sec-newt}

If we reflect a \ewt\ along the NW-SE diagonal and turn 0s into 1s and vice versa, we get a tableau whose leftmost column is all 0. If we remove that leftmost column, but keep all its rows, the bottom few of which may now be empty, we get a tableau satisfying the conditions in the following definition.  Note that, by definition, every non-top row in a \ewt\ has a~0, which is reflected in condition~\ref{newt1} in the following definition.

\begin{definition}\label{def-new}
  A \emph{\newt}\footnote{{\sc\small{NEW}} stands for New \ewt.} $\clt$ is a 0/1-filling of a Ferrers diagram, possibly with some empty bottom rows, satisfying the following conditions:
  \begin{enumerate}
  \item\label{newt1} Every column of $\clt$ has a 1.
  \item\label{newt2}  No four cells of $\clt$ that form the corners of a rectangle have 0s in two diagonally opposite corners and 1s in the other two.
  \end{enumerate}
The \emph{size} of a \newt\ is the sum of its number of rows and number of columns.
\end{definition}

It is easy to see that Lemma~\ref{well-def} also applies to \newts, and therefore we can apply the map $\ewpmap$ to such a tableau $\clt$, yielding a permutation whose length equals the size of $\clt$.  The proof of Proposition~\ref{prop-bij} also goes through when~$\ewpmap$ is applied to a \newt, with the only change that we observe, for the last paragraph of the proof, that \newts\ of size $n$ are equinumerous with \ewts\ of size $n$, by the discussion preceding Definition~\ref{def-new}, and therefore with $n$-permutations.

The properties of a permutation $\pi=\ewpmap(\clt)$ for a \newt\ $\clt$ are slightly different from the case of \ewts.  Note that Lemma~\ref{well-def} ensures that some row in any \newt\ $\clt$ is all 1s, since every column of $\clt$ has a 1.

\begin{theorem}\label{props-new}
  Let $\pi=\ewpmap(\clt)$ for a \newt\ $\clt$.  Then $\pi$ and $\clt$ have the following properties:
  \begin{enumerate}
  \item The label of the lowest (possibly empty) 0-free
  row of $\clt$ equals the first letter in $\pi$, and thus is the place of the letter 1
   in $\desexc(\pi)$.
  \item The labels of other rows in $\clt$ are descent bottoms in $\pi$, and thus excedance bottoms of $\desexc(\pi)$.
\item If $r$ is the label of a row and $c$ the label of a column in $\clt$ then the cell in row $r$ and column $c$ has a 1 if and only if $r$ precedes $c$ in $\pi$.
  \end{enumerate}
\end{theorem}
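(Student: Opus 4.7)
The plan is to mimic the proofs of Lemma~\ref{desbot-lemma}, Proposition~\ref{prop-rows-desbot} and Corollary~\ref{invbij}, adjusting for the two structural differences between \ewts\ and \newts: a \newt\ has no mandatory all-$1$s top row and, by condition~\ref{newt1} of Definition~\ref{def-new}, every column contains a~$1$. The latter fact is decisive, because it forces the very first pass of the algorithm in Definition~\ref{phidef} to be a row pass: step~\ref{phi1} yields no $1$-free columns to begin with, while Lemma~\ref{well-def}, whose proof carries over verbatim to \newts, guarantees that some $0$-free row is present. Since step~\ref{phi2} reads $0$-free rows from bottom to top, the first letter $a_1$ of $\pi$ is the label of the lowest (possibly empty) $0$-free row of $\clt$, which is half of Part~1.

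Beyond this initial pass, row passes and column passes alternate exactly as for \ewts, each nonempty by Lemma~\ref{well-def}. I would then reproduce the argument of Lemma~\ref{desbot-lemma} essentially word for word: in every pass subsequent to the first, a row label is preceded in $\pi$ by a strictly larger letter (the last, hence largest, label of the previous column pass), and every column label read after a row pass is preceded in $\pi$ by a strictly smaller letter. It follows that every row label of $\clt$ except $a_1$ is a descent bottom of $\pi$, including any further rows that happen to be $0$-free already in the initial pass, since within that pass the labels are read in strictly decreasing order. This gives Part~2; the identification of descent bottoms of $\pi$ with excedance bottoms of $\desexc(\pi)$ is then immediate from Proposition~\ref{de-bij}. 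To complete Part~1, locate $a_\ell=1$ in $\pi$: since no letter of $\pi$ is smaller than $1$, the letter $a_\ell$ has no smaller letter to its right, so case~2 of Proposition~\ref{de-bij} applies; the unique letter smaller than $1$ in the augmented sequence $a_0 a_1 \cdots a_n$ is $a_0=0$, so the rule sends $1$ to position $a_1$ of $\desexc(\pi)$, as required.

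Part~3 follows by a direct port of the bookkeeping underlying Corollary~\ref{invbij}: during the algorithm, the cell at position $(r,c)$ has its contents removed on whichever of the two passes reaches it first, and that content is a $1$ if the row pass for $r$ comes first and a $0$ if the column pass for $c$ comes first; hence the cell contains a $1$ precisely when $r$ precedes $c$ in $\pi$. The only real obstacle is ensuring that the exceptional first row pass does not disturb the description of descent and excedance bottoms; once $a_1$ is handled separately as above, everything else is a direct transcription of the \ewt\ arguments.
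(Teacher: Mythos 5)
Your proposal is correct and follows essentially the same route as the paper's proof: it uses the fact that every column of a \newt\ contains a~$1$ to force the first pass of the algorithm in Definition~\ref{phidef} to read the lowest $0$-free (possibly empty) row, and then ports Lemma~\ref{well-def}, Lemma~\ref{desbot-lemma} and Corollary~\ref{invbij} with that single adjustment. Your write-up merely spells out in more detail (e.g.\ the case-2 argument from Proposition~\ref{de-bij} for why the first letter is the place of~$1$, and the bookkeeping for Part~3) what the paper leaves as references to the earlier results.
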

\begin{proof}
  Since every column in $\clt$ has a 1, we will inevitably first read the label of a row when performing the algorithm described in Definition~\ref{phidef}.  Since row labels are read from bottom to top, we first read the label of the lowest row with no 0s (and the first letter in a permutation $\pi$ is always the place of 1 in $\desexc(\pi)$).  Apart from this, the argument in the proof of Lemma~\ref{desbot-lemma} applies here. 

Since the bijection $\ewpmap$ is defined in the same way for \newts\ as for \ewts, Proposition~\ref{pro:perm_to_ewt} and Corollary~\ref{invbij}
 apply here, except that we don't start by filling the top row with 1s, and the first letter of $\pi$ is always the label of a row.
\end{proof}

%%%%%%%%%%%%%%%%%%%%%%%%%%%%%%%%%%%%%%%%%%%%%%%%%%%%%%

\section{A bijection between \ew- and \lts}\label{sec:bij}

Both \ew- and \lts\ of size $n$ map bijectively to $n$-permutations.  Moreover, the shape of a \ewt\ $\cle$ is determined by the excedance bottoms set of the permutation $\desexc(\ewpmap(\cle))$, and the shape of a \lt\ $\clt$ is determined by the weak excedance bottoms of the permutation $\Phi(\clt)$ described in~\cite{stein-williams} (see the caption of Figure~\ref{fig-first}).  
As pointed out before, by cyclically right shifting a permutation $\pi=a_1a_2\ldots a_n$ to obtain a permutation $\pi'=\cycr(\pi)$, the set of excedance tops in $\pi$ becomes the set of weak excedance tops in $\pi'$, except that $a_n$, which is never an excedance top goes to the first place in $\pi'$, adding one weak excedance top in $\pi'$. Thus, composing these bijections we get a bijection $\ewlemap$ from the set of \ewts\ of size $n$ to the set of \lts\ of size $n$, namely,
\begin{equation}\label{deltadef}
  \ewlemap = \Phi^{-1}\circ\cycr\circ\desexc\circ\ewpmap.
\end{equation}
Since all of the bijections in this composition have well understood inverses (see \cite{es-thesis} for the inverse of $\desexc$), it is clear how to define $\leewmap$.
We now describe how to effect the bijection $\leewmap$ directly from \lts\ to \ewts, and later give a direct description of $\ewlemap$.

Let $\cll$ be a \lt. We construct the corresponding \ewt\ $\cle=\leewmap(\cll)$.
The shape of $\cle$ is obtained by taking the shape of $\cll$ and adding a column to its left border, of the same length as that border, which may have empty rows at the bottom. We label the rows and columns of $\cll$ and $\cle$ in the following ways:

Starting at the top right corner of $\cll$, label the border edges increasingly, beginning with $1$, as you follow the path down to the bottom left. We call these labels the \emph{$r$-labels} of $\cll$. Repeat the same process for $\cle$ but begin with $0$. Label the left and top edges of $\cll$ by $e-1$, where $e$ is the $r$-label at the other end of that row or column, replace $0$ by $n$, and call these the \emph{$\ell$-labels} of $\cll$. See Figure~\ref{fig:bij} for an example of this labeling.  

The paths referred to in the description below are essentially the same paths as in \cite[Section~2]{stein-williams}, except that our paths here go in the ``backwards'' direction compared to \cite{stein-williams}.  Also, instead of having our paths go east or south on their last leg they end by going west or north, that is, our paths terminate at the opposite end of the row or column compared to the paths in \cite{stein-williams}, as shown in Figure~\ref{fig:bij}.  

\begin{figure}[h]
  \centering
  \begin{tikzpicture}[scale=0.45]
\begin{scope}[shift={(1,0)}]
    \draw[step=2cm,thick] (0,2) grid (6,8);
    \draw[step=2cm,thick] (6,6) grid (8,8);
    \draw[step=2cm,thick] (0,0) grid (4,2);
    \node at (1,7) {$0$};
    \node at (3,7) {$1$};
    \node at (5,7) {$1$};
    \node at (7,7) {$1$};
    \node at (1,5) {$1$};
    \node at (3,5) {$1$};
    \node at (5,5) {$1$};
    \node at (1,3) {$0$};
    \node at (3,3) {$0$};
    \node at (5,3) {$0$};
    \node at (1,1) {$0$};
    \node at (3,1) {$1$};
    \node at (8.5,7) {\rlab{1}};
    \node at (7,5.5) {\rlab{2}};
    \node at (6.5,5) {\rlab{3}};
    \node at (6.5,3) {\rlab{4}};
    \node at (5,1.5) {\rlab{5}};
    \node at (4.5,1) {\rlab{6}};
    \node at (3,-.5) {\rlab{7}};
    \node at (1,-.5) {\rlab{8}};
    \node at (1,8.5) {\blab{7}};
    \node at (3,8.5) {\blab{6}};
    \node at (5,8.5) {\blab{4}};
    \node at (7,8.5) {\blab{1}};
    \node at (-.5,7) {\blab{8}};
    \node at (-.5,5) {\blab{2}};
    \node at (-.5,3) {\blab{3}};
    \node at (-.5,1) {\blab{5}};

{\color{red}
  \draw[thin] (7,8.2)  -- (7,7.1)  -- (8.25,7.1); 
  \color{green}
  \draw[thin] (5,8.2)  -- (5,6.9)  -- (7.0,6.9) -- (7.0,5.8); 
  \color{blue}
  \draw[thin] (6.25,3) -- (-0.25,3) ;
}
\end{scope}    
    
\begin{scope}[shift={(14.5,0)}]
    \def\x{10}
    \draw[step=2cm,thick] (0,2) grid (8,8);
    \draw[step=2cm,thick] (6,6) grid (10,8);
    \draw[step=2cm,thick] (0,0) grid (6,2);
    \node at (1,7) {$1$};
    \node at (3,7) {$1$};
    \node at (5,7) {$1$};
    \node at (7,7) {$1$};
    \node at (9,7) {$1$};
    \node at (1,5) {$0$};
    \node at (3,5) {$0$};
    \node at (5,5) {$0$};
    \node at (7,5) {$0$};
    \node at (1,3) {$1$};
    \node at (3,3) {$1$};
    \node at (5,3) {$1$};
    \node at (7,3) {$0$};
    \node at (1,1) {$1$};
    \node at (3,1) {$0$};
    \node at (5,1) {$0$};
    \node at (10.5,7) {\rlab{0}};
    \node at (9,5.5) {\rlab{1}};
    \node at (8.5,5) {\rlab{2}};
    \node at (8.5,3) {\rlab{3}};
    \node at (7,1.5) {\rlab{4}};
    \node at (6.5,1) {\rlab{5}};
    \node at (5,-.5) {\rlab{6}};
    \node at (3,-.5) {\rlab{7}};
    \node at (1,-.5) {\rlab{8}};
  \end{scope}    
\node[] at (4, -2) {$51842736$};
\node[] at (11, -2) {$18427365$};
\node[] at (19.5, -2) {$14367582$};

  \end{tikzpicture}
  \caption{An example of a \lt\ (left) and corresponding \ewt\ (right) along with their $r$-labels (red) and $\ell$-labels (blue), used in Algorithm~\ref{leewalgo}.  Also shown are their corresponding permutations, and the intermediate permutation $\desexc(14367582)$, which is a cyclic left shift of 51842736. In the \lt, the first three paths considered by Algorithm~\ref{leewalgo}, starting at $r$-labels 1, 2 and 4, respectively, are shown in red, green and blue, respectively. \label{fig:bij}}
\end{figure}
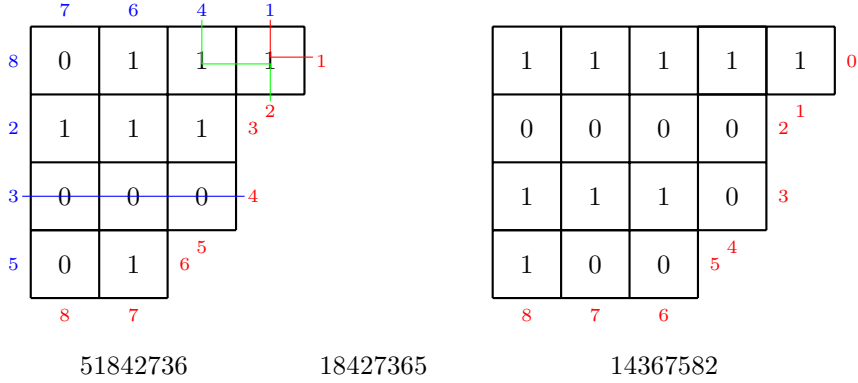

We now present the method for filling $\cle$ based on the filling of $\cll$. This also returns the permutation $\pi=\ewpmap(\cle)$.

\begin{algorithm}\label{leewalgo}
  \begin{enumerate}
\item Set all $r$-labels of $\cll$ as unvisited, set $\pi$ as the empty word and fill the top row of $\cle$ with $1$s.
\item\label{selecti} Let $i$ be the smallest unvisited $r$-label of $\cll$. If there are no unvisited $r$-labels left then stop.
\item\label{marki} Mark $i$ as visited and follow the path from the label $i$ until you hit an $\ell$-label, call it $j$. This is the path where when you hit a $1$ you turn west if you were going north and north if you were going west.
\item\label{step4} If $j$ is a column $r$-label of $\cle$ fill any empty cells of $\cle$ in that column with~$0$s. If $j$ is a row $r$-label of $\cle$ fill the empty cells in that row with~$1$s.
\item Add $j$ to the end of $\pi$. If $j$ is an unvisited $r$-label of $\cll$, then set $i=j$ and go to Step \ref{marki}, otherwise go to Step \ref{selecti}.
\end{enumerate}
\end{algorithm}
A tedious but straightforward argument, using the above algorithm for constructing $\cle=\leewmap(\cll)$, leads to the following description of how a \lt\ $\cll=\ewlemap(\cle)$ is constructed directly from the permutation $\pi=\ewpmap(\cle)$.  That in turn leads to the following direct construction of $\cll$ from $\cle$, since $\pi$ simply records the reading of columns and rows of $\cle$.

Let $m(i)$ be the position of the letter $i$ in $\desexc(\ewpmap(\cle))$.
Equivalently, $m(i)$ can be read directly from $\pi=\ewpmap(\cle)$, after we prepend 0 to $\pi$, as follows: Recall that the right-to-left minima of $\pi$ are the letters with no smaller letters to their right.  If the letter $i\ge1$ in $\pi$ is a right-to-left minimum let $m(i)$ be the letter immediately following the next right-to-left minimum to the left of $i$ in $\pi$, otherwise let $m(i)$ be the letter immediately to the right of $i$ in $\pi$.

The shape of $\cll$ is obtained by deleting the leftmost column of $\cle$, but keeping the possibly empty bottom rows resulting from that. We fill $\cll$ as follows, with~$i$ initialized to $1$, $n$ the size of $\cll$, and if $r=1$ we let $r-1=n$:

\begin{algorithm}\label{ewlealgo}
\begin{enumerate}
\item Enter $\cll$ from the $r$-label $i$, let $d\in\{N,W\}$ be the direction of travel and~$(r,c)$ the current cell, where $r$ and $c$ are the row and column~$r$-labels of that cell.
\item If $(r,c)$ is filled, then go to Step 4. If $(r,c)$ is empty, then fill it with $f(r,c)$ as follows:
\begin{itemize}
\item When $m(i)=r-1$, if $d=W$ then $f(r,c)=0$, else $f(r,c)=1$.
\item When $m(i)=c-1$, if $d=N$ then $f(r,c)=0$, else $f(r,c)=1$.
\item When $d=W$, $m(i)\ne r-1$  and there is a $1$ in the column above $(r,c)$, then $f(r,c)=1$.
\item When $d=W$, $m(i)\ne c-1$ and all cells in the column above $(r,c)$ are 0s, then $f(r,c)=0$.
\end{itemize}
\item If $(r,c)$ contains a $0$ move to the next cell in direction $d$ and if $(r,c)$ contains a $1$ move to the next cell in the ``opposite'' direction to $d$, that is, north if $d=W$ and west if $d=N$.
\item If we haven't reached the edge of $\cll$ go to Step 2. If we reach the edge of $\cll$, then increase $i$ by $1$, and if $i\le n$ go to Step 1, otherwise stop.
\end{enumerate}
\end{algorithm}

\begin{figure}[h]
  \centering
  \begin{tikzpicture}[scale=0.45]
  \begin{scope}[shift={(6,0)}]
    \draw[step=2cm,thick] (0,2) grid (8,8);
    \draw[step=2cm,thick] (6,6) grid (10,8);
    \draw[step=2cm,thick] (0,0) grid (6,2);
    \node at (1,7) {$1$};
    \node at (3,7) {$1$};
    \node at (5,7) {$1$};
    \node at (7,7) {$1$};
    \node at (9,7) {$1$};
    \node at (1,5) {$0$};
    \node at (3,5) {$0$};
    \node at (5,5) {$0$};
    \node at (7,5) {$0$};
    \node at (1,3) {$1$};
    \node at (3,3) {$1$};
    \node at (5,3) {$1$};
    \node at (7,3) {$0$};
    \node at (1,1) {$1$};
    \node at (3,1) {$0$};
    \node at (5,1) {$0$};
    \node at (10.5,7) {\rlab{0}};
    \node at (9,5.5) {\rlab{1}};
    \node at (8.5,5) {\rlab{2}};
    \node at (8.5,3) {\rlab{3}};
    \node at (7,1.5) {\rlab{4}};
    \node at (6.5,1) {\rlab{5}};
    \node at (5,-.5) {\rlab{6}};
    \node at (3,-.5) {\rlab{7}};
    \node at (1,-.5) {\rlab{8}};

\node[] at (4, -2) {$14367582$};
  \end{scope}  
  
 \node[] at (17, -2) {$18427365$};
  
\begin{scope}[shift={(20,0)}]
    \draw[step=2cm,thick] (0,2) grid (6,8);
    \draw[step=2cm,thick] (6,6) grid (8,8);
    \draw[step=2cm,thick] (0,0) grid (4,2);
    \node at (1,7) {};
    \node at (3,7) {$\textcolor{blue}{1}$};
    \node at (5,7) {$\textcolor{Green}{1}$};
    \node at (7,7) {$\textcolor{Brown}{1}$};
    \node at (1,5) {};
    \node at (3,5) {};
    \node at (5,5) {$\textcolor{blue}{1}$};
    \node at (1,3) {$\textcolor{YellowOrange}{0}$};
    \node at (3,3) {$\textcolor{YellowOrange}{0}$};
    \node at (5,3) {$\textcolor{YellowOrange}{0}$};
    \node at (1,1) {};
    \node at (3,1) {};
    \node at (8.5,7) {\textcolor{Brown}{\scriptsize $1$}};
    \node at (7,5.5) {\textcolor{Green}{\scriptsize $2$}};
    \node at (6.5,5) {\textcolor{blue}{\scriptsize $3$}};
    \node at (6.5,3) {\textcolor{YellowOrange}{\scriptsize $4$}};
    \node at (5,1.5) {\rlab{5}};
    \node at (4.5,1) {\rlab{6}};
    \node at (3,-.5) {\rlab{7}};
    \node at (1,-.5) {\rlab{8}};
    \draw[thin,Brown] (8.25,7) -- (7.25,7); \draw[thin,Brown] (7,7.4) -- (7,8);
    \draw[thin,Green] (7,5.75) -- (7,6.6);  \draw[thin,Green] (6.75,7) -- (5.25,7);\draw[thin,Green] (5,7.4) -- (5,8);
    \draw[thin,blue] (6.25,5) -- (5.25,5);  \draw[thin,blue] (5,5.4) -- (5,6.6);
    \draw[thin,blue] (4.75,7) -- (3.25,7);\draw[thin,blue] (3,7.4) -- (3,8);
    \draw[thin,YellowOrange] (6.25,3) -- (5.25,3);\draw[thin,YellowOrange] (4.75,3) -- (3.25,3);
    \draw[thin,YellowOrange] (2.75,3) -- (1.25,3);\draw[thin,YellowOrange] (0.75,3) -- (0,3);
\end{scope}    
  \end{tikzpicture}
  \caption{The \ewt\ $\clt$, and its associated permutations $\ewpmap(\clt)$ (left) and $\desexc(\ewpmap(\clt))$ (right), from Figure \ref{fig:bij} and the corresponding \lt\ partially filled by applying Algorithm \ref{ewlealgo}.\label{fig:bij2}}
\end{figure}

\section{A bijection between \new- and \lts}\label{sect-bij-new-le}

It is, of course, possible to define a bijection between \newts\ and \lts\ via permutations, in the exact same way as we did in the case of \ewts\ and \lts.  However, a variation on that turns out to have nicer properties, namely to preserve shape exactly.  As noted in Theorem~\ref{props-new}, the row labels of a \newt\ $\cln$ consist of the (number 
of the) place of~1 in the permutation $\pi=\desexc(\tau)$ and the excedance bottoms of $\pi$, where $\tau$ is the permutation read from $\cln$ according to Definition~\ref{phidef}. We call this set of excedance bottoms together with the place of 1 the \emph{augmented excedance bottoms set} of $\pi$.

If $n$ is the length of a permutation $\pi$ then the \emph{cyclic down-shift} $\donwshift$ subtracting~1 from each entry of $\pi$ and changing the resulting $0$ to $n$ takes the augmented excedance bottoms set of $\pi$ to the set of weak excedance bottoms of $\pi$. As
an example, $3164275\mapsto2753164$, where $\{1,2,3,6\}$ is the augmented excedance bottoms set of the first permutation and therefore the set of weak excedance bottoms of the second.

Thus, according to Theorem~\ref{props-new}, and the fact that the row labels of a \ewt\ $\clt$ are the excedance bottoms of the corresponding permutation $\desexc(\ewpmap(\clt))$,
we can define a bijection from \newts\ to \lts\ as the following composition (cf. Equation~\ref{deltadef}):
\begin{equation}
  \newlemap = \Phi^{-1}\circ\donwshift\circ\desexc\circ\ewpmap.
\end{equation}

With one modification the algorithm in~\ref{leewalgo} goes through to give a direct description of the above bijection.  Namely, instead of shifting the labels on the top and left border of a \lt\ $\cll$, we apply the transformation $\donwshift$ described above to the labels on the right of rows and bottom of columns.  We show an example of this map in Figure~\ref{fig-le-new}.

\begin{figure}[h]
  \centering
  \begin{tikzpicture}[scale=0.35]
\begin{scope}[shift={(-3,0)}]
    \draw[step=2cm,thick] (0,2) grid (6,8);
    \draw[step=2cm,thick] (6,6) grid (8,8);
    \draw[step=2cm,thick] (0,0) grid (4,2);
    \node at (1,7) {$0$};
    \node at (3,7) {$1$};
    \node at (5,7) {$1$};
    \node at (7,7) {$1$};
    \node at (1,5) {$1$};
    \node at (3,5) {$1$};
    \node at (5,5) {$1$};
    \node at (1,3) {$0$};
    \node at (3,3) {$0$};
    \node at (5,3) {$0$};
    \node at (1,1) {$0$};
    \node at (3,1) {$1$};
    \node at (8.5,7) {\rlab{2}};
    \node at (7,5.5) {\rlab{3}};
    \node at (6.5,5) {\rlab{4}};
    \node at (6.5,3) {\rlab{5}};
    \node at (5,1.5) {\rlab{6}};
    \node at (4.5,1) {\rlab{7}};
    \node at (3,-.5) {\rlab{8}};
    \node at (1,-.5) {\rlab{1}};

    \node at (1,8.5) {\blab{8}};
    \node at (3,8.5) {\blab{7}};
    \node at (5,8.5) {\blab{5}};
    \node at (7,8.5) {\blab{2}};
    \node at (-.5,7) {\blab{1}};
    \node at (-.5,5) {\blab{3}};
    \node at (-.5,3) {\blab{4}};
    \node at (-.5,1) {\blab{6}}; 
\end{scope}
   
\begin{scope}[shift={(6,0)}]
    \def\x{10}
    \draw[step=2cm,thick] (0+\x,2) grid (6+\x,8);
    \draw[step=2cm,thick] (6+\x,6) grid (8+\x,8);
    \draw[step=2cm,thick] (0+\x,0) grid (4+\x,2);
    \node at (1+\x,7) {$0$};
    \node at (3+\x,7) {$0$};
    \node at (5+\x,7) {$0$};
    \node at (7+\x,7) {$1$};

    \node at (1+\x,5) {$1$};
    \node at (3+\x,5) {$1$};
    \node at (5+\x,5) {$1$};

    \node at (1+\x,3) {$1$};
    \node at (3+\x,3) {$1$};
    \node at (5+\x,3) {$0$};

    \node at (1+\x,1) {$0$};
    \node at (3+\x,1) {$0$};

    \node at (8.5+\x,7) {\rlab{1}};
    \node at (7+\x,5.5) {\rlab{2}};
    \node at (6.5+\x,5) {\rlab{3}};
    \node at (6.5+\x,3) {\rlab{4}};
    \node at (5+\x,1.5) {\rlab{5}};
    \node at (4.5+\x,1) {\rlab{6}};
    \node at (3+\x,-.5) {\rlab{7}};
    \node at (1+\x,-.5) {\rlab{8}};
\end{scope}

\node[] at (0, -2) {$51842736$};
\node[] at (10, -2) {$62153847$};
\node[] at (20, -2) {$35478612$};

\end{tikzpicture}
\caption{An example of a \lt\ (left) and corresponding \newt\ (right) along with their $r$-labels (red) and $\ell$-labels (blue).  Also shown are their corresponding permutations, and the intermediate permutation $\desexc(35478612)$, which is a cyclic up-shift of 51842736. \label{fig-le-new}}
\end{figure}

\kern-4mm
\section{A bijection between \pnew- and  tree-tableaux}\label{sec-bij-new-tree}

In \cite{aval-al-tree-tableaux}, Aval, Boussicault and Nadeau introduced what they call \emph{tree-like tableaux}, Ferrers diagrams where some cells have a dot and others are empty.  We will here refer to these tableaux simply as \emph{tree-tableaux}.  

\begin{definition}\label{def-treetab}
  A \emph{tree-tableau} is a filling of a Ferrers diagram with the following properties:
\begin{enumerate}
\item The top left cell has a dot.
\item Every other cell has a dot above it (in the same column) or left of it (in the same row), but not both.
\item Every column and every row has a dot.
\end{enumerate}
The \emph{size} of a tree-tableau is one less than the sum of its number of rows and number of columns.
\end{definition}

Tree-tableaux have some very nice properties and strong connections to \ltx\ and the \emph{alternative tableaux} defined by Viennot~\cite{viennot-alt-tableaux}.  A bijection between tree-tableaux and \ltx\ can be inferred from  \cite[Prop.~3]{aval-al-tree-tableaux}.  We describe this bijection below, and how to translate it into a bijection to \ew- and \newts\ (see Figure~\ref{fig-tree-ew}) but first we introduce some notation. 

Define a dot in a tree-tableau as \emph{left-free} if it has no dots to its left in its row and \emph{up-free} if it has no dots above it in its column. By the definition of a tree-tableau every dot is either up-free or left-free, except the top left cell, which is both. Moreover, every row contains exactly one left-free dot and every column contains exactly one up-free dot. A cell has a dot \emph{weakly above} it if it  has a dot or there is a dot above it in the same column. Similarly a cell has a dot \emph{weakly right} of it if it has a dot or there is a dot to its right in the same row. A $0$ in a \lt\ is \emph{restricted} if it has a $1$ above it in the same column (see \cite{corteel-williams2}). 

The map $\treelemap$ from a tree-tableau $\clt$ to a \lt\ can be deduced from \cite[Prop.~3]{aval-al-tree-tableaux} and described as follows: Change each left-free dot to a $0$ and fill all cells to its left with $0$s. Change every up-free dot to a $1$ and fill all cells above it with $0$s. Finally, delete the leftmost column of $\clt$ and fill all remaining empty cells with $1$s.

The inverse map $\letreemap$, from a \lt\ $\cll$ to a tree-tableau, can be described as follows:  Add a column to the left of $\cll$ with a $1$ in the top cell and all $0$s below. 
Turn every $1$ that is highest in its column into a dot, turn the rightmost restricted $0$
in each row to a dot and then leave all remaining cells empty.

\def\fmap{\mathrm{F}}
We now define a map $\fmap$ from tree-tableau to \lt, which we show to be just a simpler description of $\treelemap$. Let $\clt$ be a tree-tableau and make the following changes:
\begin{itemize}
\item If a cell has a dot to its left and a dot weakly above, fill it with a $1$.
\item Otherwise fill the cell with a $0$.
\item Finally, delete the leftmost column to obtain $\fmap(\clt)$. 
\end{itemize}

\begin{lemma}
Given any tree-tableau $\clt$ we have $\treelemap(\clt)=\fmap(\clt)$.
\end{lemma}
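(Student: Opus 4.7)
The plan is to verify the equality cell by cell. Because both $\treelemap$ and $\fmap$ delete the leftmost column of $\clt$ as their last step, it suffices to show that the two fillings agree at every cell outside the leftmost column.

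The engine of the argument is the observation that in every row of $\clt$ the unique left-free dot is simply the \emph{leftmost} dot of that row: each row contains a dot by Definition~\ref{def-treetab}(3), and the left-free dot is by definition the one with no dot strictly to its left, so it must be the leftmost dot. Dually, the unique up-free dot in any column is the topmost dot of that column. Write $L(r)$ for the column of the left-free dot in row $r$ and $U(c)$ for the row of the up-free dot in column $c$. By Definition~\ref{def-treetab}(2), every non-top-left dot is exactly one of left-free or up-free (never both).

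I would then split a cell $(r,c)$ outside the leftmost column into three cases. If $(r,c)$ is a left-free dot, then $\treelemap$ writes $0$ directly, and $\fmap$ writes $0$ because the cell has a dot weakly above (itself) but, being left-free, no dot strictly to its left. If $(r,c)$ is an up-free dot (note it cannot be the top-left dot, which lies in the deleted column), then $\treelemap$ writes $1$ directly, while $\fmap$ writes $1$ because the cell has a dot weakly above (itself) and, not being left-free, has a dot strictly to its left. If $(r,c)$ is empty, then by the structural observation it has a dot strictly to its left iff $c>L(r)$ and a dot strictly above iff $r>U(c)$. Hence $\fmap$ writes $0$ at $(r,c)$ precisely when $c<L(r)$ or $r<U(c)$, and these are exactly the conditions under which $\treelemap$ writes $0$ via its ``fill cells to the left of a left-free dot'' and ``fill cells above an up-free dot'' steps; otherwise the cell is among the remaining empty cells that $\treelemap$ fills with~$1$.

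I do not anticipate a serious obstacle; the argument is essentially bookkeeping once the identification of left-free with leftmost (and up-free with topmost) is in place. The only subtle point worth mentioning is that the two ``fill with $0$'' steps of $\treelemap$ only touch cells that are empty in $\clt$, because $L(r)$ is the leftmost dot of row $r$ and $U(c)$ is the topmost dot of column $c$; so there is no ambiguity in applying $\treelemap$ to cells outside the leftmost column, and the case analysis above gives $\treelemap(\clt)=\fmap(\clt)$.
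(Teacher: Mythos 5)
Your proof is correct and follows essentially the same route as the paper's: a cell-by-cell verification that the two fillings agree, based on the observation that the left-free dot of a row is its leftmost dot and the up-free dot of a column its topmost. The paper merely organizes the cases by the $\fmap$-conditions (no dot to the left; dot to the left but none weakly above; both) rather than by the cell's content, which amounts to the same bookkeeping.
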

\begin{proof}
Consider a cell $c$ in $\clt$ and let $\treelemap(c)$ and $\fmap(c)$ be the corresponding cells in $\treelemap(\clt)$ and $\fmap(\clt)$, respectively. We show that $\treelemap(c)$ and $\fmap(c)$ are filled with the same value for every cell in $\clt$.

Firstly suppose $c$ has no dot to the left, so $\fmap(c)$ contains a $0$. There is a left-free dot in cell $a$ that is weakly right of $c$, as every row has exactly one left-free dot.
Therefore, $\treelemap(a)$  and all cells to the left are filled with $0$s, so $\treelemap(c)$ contains~$0$. Next suppose $c$ has a dot to the left but no dot weakly above, so~$\fmap(c)$ contains a $0$. So there is an up-free dot (strictly) below $c$ in $\clt$, which implies $\treelemap(c)$ contains a $0$.

 Finally suppose $c$ has a dot to the left 
 and a dot weakly above, 
 so $\fmap(c)$ contains a $1$. If $c$ contains a dot, then the dot must be up-free because it is not left-free as there is a cell to the left with a dot, which implies~$\treelemap(c)$ contains a $1$. If $c$ does not contain a dot then the left-free dot of that row must be left of $c$, and the up-free dot of that column must be above $c$, which implies~$\treelemap(c)$ is one of the remaining empty cells that get filled with a~$1$.
\end{proof}

In Section \ref{sec:bij} we defined a bijection $\leewmap$ from \lets- to \ewts. Composing this bijection with the bijection between tree-tableaux and \lets\ gives a bijection from tree-tableaux to \ewts. The map from a \lt\ $\mathcal{L}$ to a \ewt\ $\cle$ is given by Algorithm~\ref{leewalgo} by tracing paths through $\cll$, from an $r$-label to an $\ell$-label. We can define a path through a tree-tableau which changes direction in a cell $c$ if $\fmap(c)$ contains a $1$. So the paths through the tree-tableau are identical to the paths through the corresponding \lt, apart from an additional horizontal step through the cell in the extra leftmost column, regardless of its content.  A path through a tree-tableau thus changes direction in a cell $c$ according to the following rule:
\begin{equation}\label{eq:rule-tree-ew}
  \text{\emph{Change direction if and only if $c$ has a dot to the left and a dot weakly above.}}
\end{equation}

The path through a tree-tableau beginning at $r$-label $i$ thus ends at the same $\ell$-label as the path through the corresponding \lt.  
We can therefore apply Algorithm~\ref{leewalgo} to trace these paths through a tree-tableau to get a map from tree-tableaux to \ewts, see Figure~\ref{fig-tree-ew}. By the last paragraph of Section~\ref{sect-bij-new-le}, we can thus also use Algorithm~\ref{leewalgo} to produce a bijection from tree-tableaux to \newts.

\begin{figure}[h]
  \centering
  \begin{tikzpicture}[scale=0.3]
\begin{scope}[shift={(-3,0)}]
    \draw[step=2cm,thick] (0,0) grid (4,10);
    \draw[step=2cm,thick] (4,4) grid (8,10);
    \draw[step=2cm,thick] (8,6) grid (10,10);
    \node at (10.5,9) {\rlab{1}};
    \node at (10.5,7) {\rlab{2}};
    \node at (9,5.5) {\rlab{3}};
    \node at (8.5,5) {\rlab{4}};
    \node at (7,3.5) {\rlab{5}};
    \node at (5,3.5) {\rlab{6}};
    \node at (4.5,3) {\rlab{7}};
    \node at (4.5,1) {\rlab{8}};
    \node at (3,-.5) {\rlab{9}};

    \node at (3,10.5) {\blab{8}};
    \node at (5,10.5) {\blab{5}};
    \node at (7,10.5) {\blab{4}};
    \node at (9,10.5) {\blab{2}};
    \node at (-.5,1) {\blab{7}};
    \node at (-.5,3) {\blab{6}};
    \node at (-.5,5) {\blab{3}};
    \node at (-.5,7) {\blab{1}};
    \node at (-.5,9) {\blab{9}};
    
    \draw [fill=black] (1,1) circle [radius=0.25];
    \draw [fill=black] (3,3) circle [radius=0.25];
    \draw [fill=black] (7,5) circle [radius=0.25];
    \draw [fill=black] (1,7) circle [radius=0.25];
    \draw [fill=black] (3,7) circle [radius=0.25];
    \draw [fill=black] (5,7) circle [radius=0.25];
    \draw [fill=black] (1,9) circle [radius=0.25];
    \draw [fill=black] (7,9) circle [radius=0.25];
    \draw [fill=black] (9,9) circle [radius=0.25];
    
    \draw[color=red,thin] (9,10.2)  -- (9,9.1)  -- (10.25,9.1); 
    \draw[color=green,thin] (7,10.2)  -- (7,8.9)  -- (9,8.9) -- (9,6.9) -- (10.2,6.9); 
    \draw[color=blue,thin] (8.25,5) -- (-0.25,5) ;

\end{scope}
   
\begin{scope}[shift={(9,0)}]
    \draw[step=2cm,thick] (2,0) grid (4,10);
    \draw[step=2cm,thick] (4,4) grid (8,10);
    \draw[step=2cm,thick] (8,6) grid (10,10);
    \node at (10.5,9) {\rlab{1}};
    \node at (10.5,7) {\rlab{2}};
    \node at (9,5.5) {\rlab{3}};
    \node at (8.5,5) {\rlab{4}};
    \node at (7,3.5) {\rlab{5}};
    \node at (5,3.5) {\rlab{6}};
    \node at (4.5,3) {\rlab{7}};
    \node at (4.5,1) {\rlab{8}};
    \node at (3,-.5) {\rlab{9}};

    \node at (3,10.5) {\blab{8}};
    \node at (5,10.5) {\blab{5}};
    \node at (7,10.5) {\blab{4}};
    \node at (9,10.5) {\blab{2}};
    \node at (1.5,1) {\blab{7}};
    \node at (1.5,3) {\blab{6}};
    \node at (1.5,5) {\blab{3}};
    \node at (1.5,7) {\blab{1}};
    \node at (1.5,9) {\blab{9}};
    
    \node at (3,1) {$1$};
    \node at (3,3) {$0$};
    \node at (3,5) {$0$};
    \node at (3,7) {$1$};
    \node at (3,9) {$0$};
    \node at (5,5) {$0$};
    \node at (5,7) {$1$};
    \node at (5,9) {$0$};
    \node at (7,5) {$0$};
    \node at (7,7) {$1$};
    \node at (7,9) {$1$};
    \node at (9,7) {$1$};
    \node at (9,9) {$1$};
    
\end{scope}

\begin{scope}[shift={(22,0)}]
    \draw[step=2cm,thick] (0,0) grid (4,10);
    \draw[step=2cm,thick] (4,4) grid (8,10);
    \draw[step=2cm,thick] (8,6) grid (10,10);
    
    \node at (10.5,9) {\rlab{0}};
    \node at (10.5,7) {\rlab{1}};
    \node at (9,5.5) {\rlab{2}};
    \node at (8.5,5) {\rlab{3}};
    \node at (7,3.5) {\rlab{4}};
    \node at (5,3.5) {\rlab{5}};
    \node at (4.5,3) {\rlab{6}};
    \node at (4.5,1) {\rlab{7}};
    \node at (3,-.5) {\rlab{8}};
    \node at (1,-.5) {\rlab{9}};
    
    \node at (1,1) {$0$};
    \node at (1,3) {$0$};
    \node at (1,5) {$1$};
    \node at (1,7) {$0$};
    \node at (1,9) {$1$};
    \node at (3,1) {$1$};
    \node at (3,3) {$1$};
    \node at (3,5) {$1$};
    \node at (3,7) {$0$};
    \node at (3,9) {$1$};
    \node at (5,5) {$1$};
    \node at (5,7) {$1$};
    \node at (5,9) {$1$};
    \node at (7,5) {$0$};
    \node at (7,7) {$0$};
    \node at (7,9) {$1$};
    \node at (9,7) {$0$};
    \node at (9,9) {$1$};
\end{scope}

  \end{tikzpicture}
\caption{An example of a \tlt\ (left) and corresponding \lt\ (center) and \ewt\ (right). In the \tlt, the first three paths given by rule~\eqref{eq:rule-tree-ew}, starting at $r$-labels 1, 2 and 4, are shown in red, green and blue, respectively.\label{fig-tree-ew}}
\end{figure}
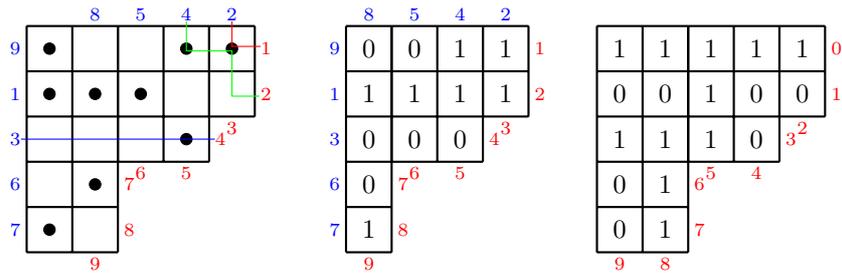

%%%%%%%%%%%%%%%%%%%%%%%%%%%%%%%%%%%%%%%%%%%%%%%%%%%%%%%%%%%%%%%%%%%

\section{Tableaux, spanning trees and the Abelian sandpile model}\label{sec:tableaux_trees_ASM}

In this section we exhibit a connection between the \tlts\ and certain spanning trees of the underlying Ferrers graph. We combine this with the work in \cite{CLB} to obtain a new bijection between \tlts\ and \ewts. This bijection is different from that of Section~\ref{sec-bij-new-tree}. 

\subsection{\ewts\ and minimal recurrent configurations of the Abelian sandpile model}\label{subsec:EWT_ASM}

The Abelian sandpile model (ASM) is a dynamic process on
a graph $G=(V \cup \{s \},E)$ with a special distinguished vertex $s$ called the sink. More precisely, it is a Markov chain on the set of \emph{configurations} on $G$.
We consider the ASM on Ferrers graphs. Let $G(F)$ be a Ferrers graph, with vertex set $0,1,\ldots,n$ where we identify a vertex with the label of its row or column in the corresponding Ferrers diagram. We will always take vertex $0$, the label of the top row of the diagram, to be the sink of the ASM.

A \emph{configuration on $G(F)$} is a vector $c=(c_1,\ldots,c_n)$ of nonnegative integers. 
We think of $c_i$ as the number of ``grains of sand'' at the vertex $i$.
Given a configuration $c$, we say that the vertex $i$ is \emph{stable} if $c_i < d_i$, where $d_i$ is the degree of vertex $i$, that is, the number of cells in the row or column labeled $i$ in the Ferrers diagram. A configuration is stable if all its vertices are stable.

Unstable vertices \emph{topple}, sending one grain to each of their neighbors. This may cause some of these neighbors to become unstable themselves, and topple in turn. The sink never topples, and can be viewed as absorbing grains.
It is possible to show (see for instance \cite[Section 5.2]{Dhar}) that starting from any configuration $c$ and toppling unstable vertices, one eventually reaches a stable configuration $c'$. 
Moreover, $c'$ does not depend on the order in which vertices are toppled in this sequence. We call $c'$ the \emph{stabilization} of $c$ and write $c'=\sigma(c)$.

For a configuration $c$, define $\tilde{c}$ by 
$$
\tilde{c}_i := 
\begin{cases} 
c_i + 1, \quad \text{if $i$ is a column label},\\ c_i, \quad \text{otherwise}. 
\end{cases}
$$ 
A configuration $c$ is \emph{recurrent} if $\sigma(\tilde{c}) = c$. Moreover, in the stabilization of $\tilde{c}$, every vertex $i$ for $1 \leq i \leq n$ topples exactly once \cite[Section 6.1]{Dhar}. 
A recurrent configuration $c$ is \emph{minimal} if it is minimal with respect to the total number of grains $\sum_{i=1}^n c_i$. The set of minimal recurrent configurations of a graph $G$ is denoted $\MRC(G)$.

Let $F$ be a Ferrers diagram. To a \ewt\ $\cle$ of shape $F$, we associate a configuration $c=c(\cle)$ on the Ferrers graph $G=G(F)$ as follows:
\begin{itemize}
\item If $i$ is the label of a column $C$, then $c_i$ is the number of $0$s in $C$.
\item If $i$ is the label of a row $R$, then $c_i$ is the number of $1$s in $R$.
\end{itemize}
Since each row of a \ewt\ contains at least one $0$, and each column at least one $1$ (in the top row), $c$ is stable by construction. Combining the results from \cite[Cor. 3]{Sch} and Section \ref{sec-intro} on acyclic orientations, we get the following result, where $\EWT(F)$ is the set of \ewts\ of shape $F$.
\begin{theorem}\label{thm:EWT_ASM}
The map $\EWT(F)\longrightarrow\MRC(G)$ that maps $\cle$ to $c=c(\cle)$
is a bijection. Moreover, let $\tau=\Phi(\cle)$ be the permutation obtained by Definition~\ref{phidef}. Then, starting from the configuration $\tilde{c}$, the vertices of $G$ can be toppled in the order of $\tau$, read from left to right, to reach the configuration $c$.
\end{theorem}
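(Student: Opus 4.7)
The plan is to reduce the bijection statement to the orientation correspondence from Section~\ref{sec-intro} and then to establish the toppling claim by simulating Definition~\ref{phidef} in lockstep with the sandpile dynamics.

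For the first assertion, recall from Section~\ref{sec-intro} that $\EWT(F)$ is in bijection with the acyclic orientations of $G=G(F)$ having unique sink $t_0$, via the rule that cell $(i,j)$ of $\cle$ contains a $0$ iff $(t_i,b_j)$ is directed $t_i\to b_j$. Under this identification the configuration $c(\cle)$ takes a clean form: for a column vertex $b_j$, $c_j$ is the number of $0$s in column $j$, which equals $\mathrm{indeg}_{\mathcal{O}}(b_j)$, and likewise $c_i = \mathrm{indeg}_{\mathcal{O}}(t_i)$ for a non-sink row vertex $t_i$. The cited \cite[Cor.~3]{Sch} provides a bijection $\mathcal{O}\mapsto(\mathrm{indeg}_{\mathcal{O}}(v))_{v\neq t_0}$ between acyclic orientations of $G$ with unique sink $t_0$ and $\MRC(G)$; composing it with the orientation bijection of Section~\ref{sec-intro} gives exactly $\cle\mapsto c(\cle)$.

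For the toppling claim, I would run the algorithm of Definition~\ref{phidef} alongside the sandpile starting from $\tilde{c}$, inductively maintaining the invariant that after $a_1,\dots,a_{k-1}$ have been read, the vertices $a_1,\dots,a_{k-1}$ are precisely those that have already toppled. Suppose $a_k=j$ is a column label; Definition~\ref{phidef} tells us column $j$ has no remaining $1$s outside the already-deleted top row, i.e., every non-sink vertex $t_i$ with $b_j\to t_i$ in $\mathcal{O}$ has already toppled. There are $\mathrm{outdeg}(b_j)-1$ such vertices (subtracting the edge $b_j\to t_0$ to the sink), so the current grain count at $b_j$ equals
\[
\tilde{c}_j + (\mathrm{outdeg}(b_j)-1) = (c_j+1) + (\mathrm{outdeg}(b_j)-1) = \mathrm{indeg}(b_j)+\mathrm{outdeg}(b_j) = d_{b_j},
\]
exactly the toppling threshold. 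A symmetric calculation handles the case $a_k=i$ a row label (no sink correction is needed here, since every neighbor of $t_i$ is a column vertex, all of which do topple). Hence each $a_k$ can be legally toppled in turn, and every non-sink vertex topples exactly once; a quick net-change check ($0$ for rows, $-1$ for columns, accounting for the one grain absorbed by $t_0$) confirms the final configuration is $c$.

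The main obstacle will be the inductive bookkeeping, specifically verifying that the parsing conditions of Definition~\ref{phidef} (``no $1$s remaining in column $j$'' and ``no $0$s remaining in row $i$'') translate precisely into ``all outgoing neighbors of the corresponding vertex have already toppled''. This rests on a small but essential observation: a row carrying a $0$ in column $j$ cannot be read before column $j$ itself (it would still possess a $0$ among the unread columns), so the rows toppled at the moment column $j$ is read are exactly those containing a $1$ in column $j$; the dual statement applies to rows. Once this synchronization is pinned down, the grain-count identity above drops out, and the recurrence of $c$ follows via the characterization in \cite[Section~6.1]{Dhar} that every non-sink vertex topples exactly once in the stabilization of $\tilde{c}$.
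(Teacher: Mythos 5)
Your proposal is correct. For the bijection part it coincides with the paper's argument: both identify $\cle$ with an acyclic orientation of $G(F)$ having unique sink $0$ (the correspondence from Section~\ref{sec-intro}), observe that $c(\cle)$ records indegrees, and invoke \cite[Cor.~3]{Sch}. Where you diverge is the toppling claim. The paper disposes of it by citation: it notes that the toppling order of $\tilde{c}$ is the \emph{firing sequence} of \cite{Sch}, obtained by iteratively deleting sinks of the orientation, and that this sink-removal procedure is the exact analogue of the reading process of Definition~\ref{phidef} (a column becomes a sink precisely when its $1$s are exhausted, a row when its $0$s are). You instead verify the claim from scratch by an inductive grain count: when a column label $j$ is read, exactly its $\operatorname{outdeg}(b_j)-1$ non-sink out-neighbours have toppled, giving $\tilde{c}_j+\operatorname{outdeg}(b_j)-1=d_{b_j}$ grains, and dually for rows, so each letter of $\tau$ can legally topple in turn; your synchronization observation (a row with a $0$ in column $j$ cannot be read before column $j$, and dually) is precisely the content of the paper's ``exact analogue'' remark, made explicit. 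The paper's route is shorter because it leans on the firing-sequence machinery of \cite{Sch}; yours is self-contained and makes the legality of each toppling and the final net-change computation explicit, at the cost of the bookkeeping you flag. One cosmetic point: your closing appeal to \cite[Section~6.1]{Dhar} for recurrence is unnecessary, since membership of $c$ in $\MRC(G)$ already comes from \cite[Cor.~3]{Sch} in the first part, and your explicit toppling sequence itself exhibits $\sigma(\tilde{c})=c$.
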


\begin{proof}
Let $G=G(F)$ be a Ferrers graph. Write $O(G)$ for the set of acyclic orientations of $G$ where the vertex labeled $0$ corresponding to the top row of $F$ is the unique sink. 
For an orientation $\clo\in O(G)$, we define a configuration $c=c(\clo)$ on $G$ by 
$c_i=\mathrm{in}_i$, the number of incoming edges to the vertex $i$ in $\clo$. 
In terms of the ASM, and after switching the orientations of all edges, Corollary 3 in \cite{Sch} can be restated as follows: The map $O(G) \longrightarrow \MRC(G)$ that maps $\clo$ to $c=c(\clo)$ is a bijection. Combined with the observation of a one-to-one correspondence between $\EWT(F)$ and $O(G(F))$ in Section~\ref{sec-intro}, this yields the desired bijection.

The toppling order of $\tilde{c}$ is the same as the \emph{firing sequence} in \cite{Sch}. For an orientation $\clo$, this is given by removing the sink and
all incoming edges, recording all vertices that are now sinks, removing these, and iterating. This is the exact analogue of the construction of the bijection $\Psi$ in Definition~\ref{phidef}, though in the toppling order the order in which we record vertices in
each block in the run-decomposition of $\tau$ is unimportant.
\end{proof}

\subsection{Tree-tableaux and spanning trees}\label{subsec:TLT_ST}

Let $F$ be a Ferrers diagram, and $G=G(F)$ the corresponding Ferrers graph. A \emph{dotted tableau} is a filling of $F$ where each cell is either empty or contains a single dot. A \tlt\ is thus an example of a dotted tableau. To any dotted tableau $ \clt $ we may associate a subgraph $S=S(\clt)$ of $G$ by keeping the edges corresponding to dotted cells, and removing those corresponding to empty cells. This clearly gives a bijective correspondence between dotted tableaux and subgraphs of $G$. Now let $<_E$ be a total order on the
edges of $G$, and $S$ be a spanning tree of $G$. An edge $e \notin S$ is \emph{externally active} if it is the minimal edge in the unique cycle of $S \cup \{e\}$ for the order $<_E$. The external activity $e(S)$ of a spanning tree $S$ is its number of externally active edges. In this subsection we identify a cell in a Ferrers diagram with its corresponding edge in the Ferrers graph.

\begin{definition}
Let $G=G(F)$ be a Ferrers graph, and $<_E$ a total order on its edges or, equivalently, on the cells of $F$. We say that $<_E$ is \emph{$F$-compatible} if it is increasing in every row from left to right, and every column from top to bottom, in $F$.
\end{definition}

\begin{theorem}\label{thm:tree-tt}
Let $G=G(F)$ be a Ferrers graph, and $<_E$ an $F$-compatible order. Let $ \clt $ be a dotted tableau of shape $F$. Then $ \clt $ is a \tlt\ if and only if $S(\clt)$ is a spanning tree with external activity $0$.
\end{theorem}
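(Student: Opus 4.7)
Plan:

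I prove the two implications separately, handling the forward direction structurally and the reverse by a counting argument.

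For the forward direction, let $\clt$ be a tree-tableau. To show $S(\clt)$ is a spanning tree, I count its edges and verify acyclicity. Each column of $\clt$ contains a unique up-free dot (its topmost) and each row a unique left-free dot (its leftmost), while by property~(1) of Definition~\ref{def-treetab} the top-left cell is the only dot counted in both; hence $\clt$ has exactly $(k+1)+m-1=|V(G)|-1$ dots. By Lemma~\ref{4-cycle}, to rule out cycles it suffices to forbid $4$-cycles, but four corners $(i_1,j_1),(i_1,j_2),(i_2,j_1),(i_2,j_2)$ of a rectangle (with $i_1<i_2$, $j_1<j_2$) being simultaneously dotted would force the dotted cell $(i_2,j_2)$ to have both a dot above and a dot to the left, contradicting property~(2). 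Acyclicity together with the edge count shows $S(\clt)$ is a spanning tree.

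It remains to show external activity zero: for every non-tree edge $e=(t_i,b_j)$, the fundamental cycle $C_e$ in $S(\clt)\cup\{e\}$ must contain an edge strictly smaller than $e$ under $<_E$. The cell $(i,j)$ is empty, so I split on the location of dots in row~$i$ and column~$j$. When row~$i$ has a dot in some column~$j'<j$, or symmetrically column~$j$ has a dot in some row $i'<i$, tracing the tree path from $t_i$ to $b_j$ locates such a dot among the edges of $C_e$, and that dot yields an edge $<_E e$ by $F$-compatibility. The delicate case is when all dots of row~$i$ lie in columns $>j$ \emph{and} all dots of column~$j$ lie in rows $>i$. Here I trace the unique tree path from $t_i$ to $b_j$ in $S(\clt)$ and exploit the fact that $(t_0,b_1)\in S(\clt)$ is the $<_E$-minimum edge of~$G$: by iterating the chord reduction from the proof of Lemma~\ref{4-cycle} along $C_e$ and using the tree-tableau structure of $\clt$, the path must pass through the ``northwest region'' and hence contain some $(t_r,b_c)$ with $r<i$ and $c<j$, which is $<_E e$. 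This path-chasing step is the main technical obstacle of the proof.

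For the reverse direction, I argue enumeratively. The map $\clt\mapsto S(\clt)$ from dotted tableaux to subgraphs is injective, and the forward direction shows that its restriction to $\TT(F)$ lands in the set of spanning trees of $G(F)$ with external activity zero. Composing the bijection from tree-tableaux of shape $F$ to \lts\ of shape $F$-minus-leftmost-column (Section~\ref{sec-bij-new-tree}) with the bijection $\leewmap$ of Section~\ref{sec:bij} that reintroduces that column yields a shape-preserving bijection $\TT(F)\to\EWT(F)$, so $|\TT(F)|=|\EWT(F)|$. By Theorem~\ref{thm:EWT_ASM} this equals $|\MRC(G(F))|$, and by the Cori--Le Borgne bijection~\cite{CLB} (matching the level of a recurrent configuration with the external activity of its spanning tree), $|\MRC(G(F))|$ equals the number of spanning trees of $G(F)$ with external activity zero. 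An injection between finite sets of equal cardinality is a bijection, so every spanning tree of external activity zero is $S(\clt)$ for some tree-tableau $\clt$, giving the reverse implication.
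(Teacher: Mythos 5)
There are genuine gaps, both in the forward direction. First, your acyclicity argument misapplies Lemma~\ref{4-cycle}: that lemma concerns \emph{orientations of the full Ferrers graph}, where every edge of $G$ is present, and its proof crucially uses chords $(t_{i_2},b_{j_n})$ that exist in $G$ but need not be dotted in $\clt$. For a subgraph $S(\clt)$ the reduction ``cycle $\Rightarrow$ 4-cycle'' is simply false: in the complete bipartite Ferrers graph on rows $\{1,2,3\}$ and columns $\{a,b,c\}$, the six edges $(1,a),(1,b),(2,b),(2,c),(3,c),(3,a)$ form a 6-cycle containing no 4-cycle. So ruling out dotted rectangles does not rule out cycles; you need a direct argument, e.g.\ the paper's: take the lowest row meeting a putative cycle and the rightmost dot of the cycle in that row, and observe it must have a dot above it and a dot to its left, contradicting Definition~\ref{def-treetab}. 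Second, and more seriously, the external-activity-zero part is not actually proved. Your ``easy case'' claim --- that if row $i$ has a dot on the small side of column $j$ then the tree path from $t_i$ to $b_j$ passes through such a dot --- is unjustified: which dots of row $i$ lie on the fundamental cycle is dictated by the tree's structure (the path toward the root leaves a row through its leftmost dot and a column through its topmost dot, which is exactly the paper's Lemma~\ref{lem:zigzag}), not by the mere existence of a conveniently placed dot. And your ``delicate case'' is explicitly left open (``the main technical obstacle''); the proposed fix by ``iterating the chord reduction from Lemma~\ref{4-cycle} along $C_e$'' cannot work as stated, because chords of the fundamental cycle are non-tree edges, whereas non-external-activity of $e$ requires exhibiting a \emph{tree} edge of $C_e$ that is $<_E$-smaller than $e$. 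This is precisely the content the paper supplies via Lemmas~\ref{lem:zigzag} and~\ref{lem:order_ineq} and the analysis of the paths $i\rightarrow 0$ and $j\rightarrow 0$; without it the forward direction is incomplete.

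Your reverse direction is a genuinely different route from the paper's: instead of arguing structurally that a spanning tree with $e(S)=0$ yields the dotted-cell conditions, you count, using injectivity of $\clt\mapsto S(\clt)$, the shape-preserving chain $\TT(F)\to\EWT(F)$ from Sections~\ref{sec:bij}--\ref{sec-bij-new-tree}, Theorem~\ref{thm:EWT_ASM}, and Cori--Le Borgne to equate $|\TT(F)|$ with the number of spanning trees of external activity $0$. This is legitimate in principle (and the paper's remark after Theorem~\ref{thm:tree-ewt-bij} endorses the needed CLB fact), though it imports heavier machinery --- including the order-independence of the count of trees with external activity $0$ and the identification of minimal recurrent configurations with level-$0$ ones --- where the paper gets the converse by an elementary local argument. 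But since your counting argument presupposes the forward inclusion, the unproved forward direction leaves the whole proof incomplete.
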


\begin{proof}
First suppose that $ \clt $ is a \tlt , and let $S=S(\clt)$. We first show that $S$ contains no cycles. Assume that $S$ does contain a cycle $C$, and consider the lowest row in $ \clt $ that contains a dotted cell belonging to $C$, and take $c$ to be the rightmost dotted cell in that row belonging to $C$. Then each vertex of $c$ must be connected by an edge to at 
least one other vertex in the cycle $C$, and by construction this means that $c$ must have a dotted cell above it in its column, and to its left in its row, which contradicts the definition of a \tlt . Thus $S$ contains no cycles. Moreover, from~\cite{aval-al-tree-tableaux}, we have that the number of dotted cells in $ \clt $, that is, of edges in $S$, is one less than the number of vertices of $G$, and thus $S$ is a spanning tree.

We now need to show that $e(S)=0$. First we will prove two useful lemmas. Since $S$ is a 
spanning tree, for any pair of vertices $v,w$ in the graph $G$, there is a unique path from $v$ to $w$, which we denote $v \rightarrow w$.
\begin{lemma}\label{lem:zigzag}
Let $v$ be a vertex of $S$. Then on the unique 
path $v \rightarrow 0$ in $S$, the row labels of vertices appear in decreasing order, and the column labels in increasing order.
\end{lemma}

\begin{proof}
Given a \tlt\ $ \clt $ and a vertex $v$ of $S(\clt)$ we obtain the path from $v \rightarrow 0$ via the following steps,
\begin{itemize}
\item if $v$ is a row of the Ferrers diagram, set $v'$ to be the leftmost column of $ \clt $ that contains a dot in the row $v$;
\item if $v$ is a column of the Ferrers diagram, set $v'$ to be the topmost row of $ \clt $ that contains a dot in the column $v$;
\end{itemize}
and iterating the above on $v'$ until reaching the top row, which is the vertex~$0$. Since going from bottom to top decreases row labels, and going from right to left increases column labels, we have the desired result.
\end{proof}

\begin{lemma}\label{lem:order_ineq}
Let $(i,j)$ and $(i',j')$ be two edges of $G$ such that $i' \leq i$ and $j' \geq j$, that is, the cell $(i',j')$ is above and to the left of the cell $(i,j)$. Then $(i',j') \leq_E (i,j)$.
\end{lemma}

\begin{proof}
Since $F$ is a Ferrers diagram, the edge $(i',j)$ is in $G$, and the corresponding cell is to the right of $(i',j')$ in the same row and above $(i,j)$ in the same column. The order $<_E$ being $F$-compatible, this implies $(i',j') \leq_E (i',j) \leq_E (i,j)$ as desired.
\end{proof}

We return to the proof that $e(S)=0$. Let $e=(i,j)$ be an edge that isn't in $S$, where $i$, resp. $j$, is the label of the row, resp. column, of the cell $e$. The unique cycle $C$ in $S \cup \{e\}$ is obtained by taking the union of the paths $i \rightarrow 0$ and $j \rightarrow 0$, removing edges that appear in both paths, and combining with the edge $e$. Let $i'$ be the first row label encountered on the path $j \rightarrow 0$ such that $i' \leq i$, and $j'$ the column label of the vertex immediately preceding $i'$ on this path. 
By Lemma~\ref{lem:zigzag}, $j'\geq j$. If $(i',j')$ isn't on the path $i \rightarrow 0$, then it belongs to $C$, and by Lemma~\ref{lem:order_ineq}, we have $(i',j') <_E (i,j)$. The inequality is strict since $(i,j)$ is not on the path $j \rightarrow 0$ (it isn't in $S$). Thus $(i,j)$ is not externally active, since $C$ contains an edge that is strictly smaller in $<_E$. If the path $i \rightarrow 0$ does go through $j'$ and $i'$ in that order (it must go through them in the same order as the path $j \rightarrow 0$), then let $i''$ be the vertex immediately preceding $j'$ on the path $i \rightarrow 0$. As above, by Lemma~\ref{lem:zigzag}, $i'' \leq i$. Since $i'$ is the first vertex encountered on the path $j \rightarrow 0$ with $i' \leq i$, and by construction this path encounters $i''$ before $i'$, this implies that $(i'',j')$ is not in the path $j \rightarrow 0$. Moreover, from Lemma~\ref{lem:order_ineq} we get that $(i'',j') <_E (i,j)$, which shows that $(i,j)$ is not externally active. We have thus shown that if $ \clt $ is a \tlt , then $S=S(\clt)$ is a spanning tree of $G$ with $e(S)=0$.

For the converse, suppose that $S=S(\clt)$ is a spanning tree with $e(S)=0$. We show that $ \clt $ is a \tlt . Since $<_E$ is $F$-compatible, it follows that the edge $e^{\rho}$ corresponding to the top left corner of $F$ is minimal in $<_E$. Therefore it must be in the spanning tree $S$, since otherwise it would be externally active. Thus the top left corner of $ \clt $ is dotted. Now consider a dotted cell $e_1$ of $ \clt $, and suppose it has both a dot above it in its column, say some $e_2$, and a dotted cell to its left in its row, say some $e_3$. Consider the cell $e_4$ that is in the same row as $e_2$ and column as $e_3$. Since $e_1,e_2,e_4,e_3$ is a cycle of $F$, and $S$ is a spanning tree, $e_4$ cannot be in $S$. But the condition that $<_E$ is $F$-compatible implies that $e_4 <_E e_2 <_E e_1$ and $e_4 <_E e_3 <_E e_1$, and thus $e_4$ is externally active, hence the desired contradiction.

It remains to show that every dotted cell $e$ has a dotted cell either above it in its column or to its left in its row. Let $e=(i,j)$ where $i$ is the label of its row and $j$ of its column. Since $S$ is a spanning tree, there are paths $i \rightarrow 0$ and $j \rightarrow 0$ using edges of $S$, and exactly one of these paths starts along the edge~$e$. Suppose this is the path $i \rightarrow 0$. We write this path as $i,e,j,e',i',\ldots,0$. It is sufficient to show that the labels $i,i'$ satisfy $i'<i$ since then the cell $e'=(i',j)$ is a dotted cell above $e$ in its column. Seeking contradiction, suppose that $i'>i$. Let $e''=(i'',j'')$ be the first edge on the path $i \rightarrow 0$ such that $i''<i$ or $j''<j$. By construction exactly one of these conditions will hold. If we have $j''<j$,  then the edge $(i,j'')$ is externally active, since it is minimal in the cycle $i,j,i',\ldots,i'',j'',i$. Similarly, if $i''<i$ then the edge $(i'',j)$ is externally active. This is the desired contradiction, and thus $i'<i$ as desired. We can show in exactly the same fashion that if it is the path $j \rightarrow 0$ that goes through $e$, then there is a dotted cell to the left of $e$ in its row.
\end{proof}

Thus, fixing an order $<_E$ that is $F$-compatible, we have a bijection between \tlts\ and spanning trees with external activity equal to $0$. We now use the work of Cori and Le Borgne \cite{CLB} to build a new bijection from \tlts\ to \ewts.

\subsection{Tree-tableaux and \ewts\ }\label{sec:tree-ew}

Let $G=G(F)$ be a Ferrers graph on $n+1$ vertices,
labeled $0,\ldots,n$.  We present an algorithm that constructs an $n$-permutation $\tau=v_1\ldots v_n$ from a \tlt~$\clt$ on the Ferrers diagram corresponding to $G$. This allows us to construct a bijection between \tlts\ and \ewts, which is different from that of Section~\ref{sec-bij-new-tree}. 
\begin{algorithm}\label{algo-tree-tab-perm}
Given a \tlt\ $\clt$ with row and column labels $0,\ldots,n$, initialize $v_0=0$ and~$i=1$, and determine each $v_i$ for $i>0$ as follows:
\begin{enumerate}
\item\label{step1} Set $j=i-1$.
\item\label{step2} If $v_j$ is a column (resp. row) let $e_i$ be the bottommost (resp. rightmost) dotted cell in $v_j$ not in $\{e_1,\ldots,e_{i-1}\}$. If no such cell exists decrease $j$ by one and repeat Step~\ref{step2}.
\item Let $v_i$ be the row or column of $e_i$ not in $\{v_0,\dots,v_{i-1}\}$.
\item Increase $i$ by one, if $i=n+1$ then stop, otherwise go to Step~\ref{step1}.
\end{enumerate}
\end{algorithm}

The algorithm above can be described in the following way: 
Let $e_1$ be the rightmost dotted cell in the top row, set $v_1$ as the column of $e_1$, and say that this cell and column have been visited. We then take the bottommost dotted cell in column $v_1$, we set this as $e_2$ and $v_2$ as the row of $e_2$. We then repeat this with the rightmost dotted cell in $v_2$. We continue this until there are no unvisited dotted cells in the current row/column $v_i$, at which point
we go backwards through the visited rows and columns $v_{i-1},\ldots,v_1$ until we find the first row/column that does have an unvisited dotted cell.  We then take the rightmost/bottommost such cell, and then repeat the previously described process until all rows and columns have been visited.

\begin{example}\label{ex:TLT_perm1}
Let $ \clt $ be the \tlt\ in Figure~\ref{fig:ex_tlt_perm1}, with rows and columns labeled. We construct the permutation $\tau=v_1\ldots v_n$ by Algorithm~\ref{algo-tree-tab-perm}. Start with $v_0 = 0$, which is the top row, and set $i=1$ and $j=0$. Since $v_j=0$ is a row and $i=1$, we set $e_1$ to be the rightmost dotted cell in $v_0$, that is, $ e_1 = (0,3)$. We then set $v_1=3$, increase $i$ by one, so $i=2$, and return to Step 1, setting $j=1$. Now $v_j = v_1 = 3$, which is a 
column, so we let $e_2$ be the bottommost dotted cell in the column $3$ that is not in $\{e_1\} = \{ (0,3) \}$. Thus $e_2 = (1,3)$, $v_2 = 1$, and we set $i=3$ and return to Step 1. Now we set $j=2$, so $v_j = v_2 = 1$, which is a row. Thus $e_3$ is the rightmost dotted cell in the row $1$ that isn't in $\{e_1,e_2\}$ so $e_3 = (1,2)$ and $v_3=2$. We set $i=4$ and return to Step 1. 

Now $j=3$, so $v_j=v_3 = 2$. But the column $2$ contains a single dotted cell (which is $e_3$), so we must decrease $j$ by one, that is, set $j=2$ and $v_j = v_2 = 1$ and repeat Step 2. But the two dotted cells of the row $1$ are $e_2$ and $e_3$, so again we decrease $j$ by one, set $v_j = v_1 = 3$ and repeat Step 2. Once again, the two dotted cells $e_1$ and $e_2$ of the column $3$ have already appeared, so again we decrease $j$ by one, setting $v_j=v_0=0$. This time, the dotted cell $(0,5)$ isn't in $\{e_1,e_2,e_3\}$, and it is the rightmost such dotted cell in the row $0$, so we set $e_4 = (0,5)$, $v_4=5$, $i=5$ and return to Step 1, setting $j=4$. This time $v_j = v_4 = 5$, and $e_5$ is the bottommost dotted cell in the column $5$ that is not in $\{e_1,\ldots,e_4 \}$, so $e_5=(4,5)$, $v_5=4$, and we increase $i$ by one to $i=6$. Now we have reached $i=n+1$, so the algorithm terminates, producing the permutation $\tau = 31254$.

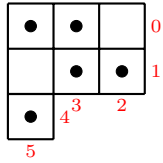
\begin{figure}[h]
  \centering
  \begin{tikzpicture}[scale=0.3]
    \draw[step=2cm,thick] (0,2) grid (6,6);
    \draw[step=2cm,thick] (0,0) grid (2,2);
    \node at (6.5,5) {\rlab{0}};
    \node at (6.5,3) {\rlab{1}};
    \node at (5,1.5) {\rlab{2}};
    \node at (3,1.5) {\rlab{3}};
    \node at (2.5,1) {\rlab{4}};
    \node at (1,-0.5) {\rlab{5}};
    \draw [fill=black] (1,1) circle [radius=0.25];
    \draw [fill=black] (1,5) circle [radius=0.25];
    \draw [fill=black] (3,3) circle [radius=0.25];
    \draw [fill=black] (3,5) circle [radius=0.25];
    \draw [fill=black] (5,3) circle [radius=0.25];
    
  \end{tikzpicture}
  \caption{An example of a \tlt\ from which Algorithm~\ref{algo-tree-tab-perm} constructs the permutation $\tau=31254$. \label{fig:ex_tlt_perm1}}
\end{figure}
\end{example}

\begin{lemma}\label{lem:well-def}
Algorithm~\ref{algo-tree-tab-perm} is well defined, and thus produces an $n$-permutation $\tau := v_1 \ldots v_n$.
\end{lemma}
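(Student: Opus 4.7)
The plan is to interpret Algorithm~\ref{algo-tree-tab-perm} as a depth-first traversal of the spanning tree $S = S(\clt)$ of the Ferrers graph $G = G(F)$ furnished by Theorem~\ref{thm:tree-tt}, in which the $e_i$ correspond to edges of $S$ and the $v_i$ to the vertices visited, with $v_0=0$ as the root.

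First I would establish by induction on $i$ the invariants that the cells $e_1,\ldots,e_i$ are $i$ distinct edges of $S$, and that together they form a subtree of $S$ whose vertex set equals $\{v_0,\ldots,v_i\}$ (in particular, these are $i+1$ distinct vertices). The only nontrivial part of the inductive step is verifying that the vertex $v_i$ produced in Step~3 is genuinely new: if $v_i$ were already in $\{v_0,\ldots,v_{i-1}\}$, then $e_i$ together with the unique path in the current subtree joining $v_j$ and $v_i$ would form a cycle in $S$, contradicting the fact that $S$ is a tree.

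Next I would verify that Step~2 always succeeds so long as $i\le n$. By the inductive invariant the visited vertices $\{v_0,\ldots,v_{i-1}\}$ form a proper subtree of $S$ (their number $i$ is strictly less than $n+1=|V(G)|$), and since $S$ is connected and spans $V(G)$ there must be an edge of $S$ joining some visited $v_k$ to an unvisited vertex. Such an edge appears as a dotted cell in the row or column indexed by $v_k$ which is not yet among $e_1,\ldots,e_{i-1}$. Thus, as Step~2 decrements $j$ down through the stack, it will encounter such a $k$ before $j$ reaches $-1$ and return a valid $e_i$; then Step~3 is automatically well defined since each edge of $G$ has exactly two endpoints.

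Finally, termination at $i=n+1$ follows from the invariants: at that point $\{v_0,\ldots,v_n\}$ consists of $n+1$ distinct entries, hence equals $\{0,1,\ldots,n\}$, so $v_1\cdots v_n$ is an $n$-permutation. The main obstacle is pinning down the depth-first interpretation precisely enough to guarantee that Step~2's backtracking mechanism (decrementing $j$ one step at a time) is equivalent to asking whether \emph{any} already-visited vertex has an unused incident edge of $S$; once this equivalence is recorded, the acyclicity and connectivity of $S$ supply the rest.
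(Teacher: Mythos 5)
Your proof is correct, but it takes a genuinely different route from the paper. You lean on Theorem~\ref{thm:tree-tt}: viewing Algorithm~\ref{algo-tree-tab-perm} as a depth-first traversal of the spanning tree $S(\clt)$, you get Step~2's success from connectivity (a proper subtree of a spanning tree always has an incident edge leading to an unvisited vertex, and such an edge cannot already be among $e_1,\ldots,e_{i-1}$ since included edges have both endpoints visited) and Step~3's success from acyclicity (a repeated endpoint would close a cycle in $S(\clt)$). The paper instead argues directly on the tableau, without invoking the spanning-tree theorem at all: assuming the algorithm is stuck, it picks a non-included dotted cell and repeatedly passes to the leftmost dot of its row or the topmost dot of its column, showing each of these is again non-included, until it reaches a non-included dot in the top row $v_0$, a contradiction. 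Your approach buys a cleaner conceptual picture and, notably, an explicit verification that Step~3 is well defined and that the $v_i$ are pairwise distinct (so the output really is a permutation), points the paper's proof leaves implicit, since it only establishes that Step~2 can always be executed; the paper's approach buys self-containedness, needing only the defining local conditions of tree-like tableaux rather than the machinery of Section~\ref{subsec:TLT_ST}. One small remark: your closing worry about matching the backtracking in Step~2 exactly to depth-first search is not actually needed for well-definedness --- since Step~2 scans $j=i-1,i-2,\ldots,0$, it suffices (as you in fact argue) that \emph{some} visited $v_k$ has an unused incident dotted cell, and your crossing-edge argument supplies exactly that.
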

\begin{proof}
We need to show that for any $i < n$ there exists a vertex $v_j$ for some $j \le i$ such that the row or column $v_j$ contains a dotted cell that isn't in $\{e_1,\ldots,e_{i-1}\}$. We proceed by contradiction. Fix $i <n$ such that for all $j \le i$, all the dotted cells of the row or column $v_j$ are in $\{e_1,\ldots,e_{i-1}\}$. Since $i <n$, there exists some dotted cell $e$ in $\clt$ that is not in $\{e_1,\ldots,e_{i-1}\}$. Without loss of generality, we may assume that $e$ has a dotted cell to its left (and none above). Let $v$ be the row containing $e$, and let $e'$ be the leftmost dotted cell in row $v$. Since $v$ can't be in $\{v_0,\ldots,v_i\}$, this means that $e'$ is not in $\{e_1,\ldots,e_{i-1}\}$.  By definition, $e'$ has no dotted cell to its left, so it must have one above. Let $e''$ be the topmost dotted cell in the column containing $e'$. As before, $e''$ cannot be in $\{e_1,\ldots,e_{i-1}\}$. By iterating these steps, we eventually reach a dotted cell $f$ in the top row that isn't in $\{e_1,\ldots,e_{i-1}\}$. But this is a contradiction, since the top row is $v_0$, and by assumption all dotted cells in $v_0$ are in $\{e_1,\ldots,e_{i-1}\}$.
\end{proof}

\begin{proposition}\label{pro:desc_rows}
Let $\tau = v_1 \ldots v_n$ be the $n$-permutation produced from a \tlt\ $\clt$ by Algorithm~\ref{algo-tree-tab-perm}. Then the descent bottoms of $\tau$ are exactly the row labels of the 
Ferrers diagram $F$ of $\clt$.
\end{proposition}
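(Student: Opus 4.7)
The plan is to show that for each $i \in \{1, \ldots, n\}$, the vertex $v_i$ is a row label if and only if $v_{i-1} > v_i$ (with the convention $v_0 = 0$, so $v_1$ is never a descent bottom). The case analysis will split on how $v_i$ is produced by Algorithm~\ref{algo-tree-tab-perm}. The single structural fact about the Ferrers graph $G(F)$ that I will invoke repeatedly is that every edge $(r,c)$ satisfies $r < c$ as labels; this follows immediately from tracing the SE border of $F$, since a vertical segment labeled $r$ is encountered before the horizontal segment labeled $c$ for any cell $(r,c)\in F$.

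The easy subcase is when step $i$ finds $e_i$ directly in $v_{i-1}$ (that is, $j=i-1$ in Step~\ref{step2}). Then $v_i$ is a spanning-tree neighbor of $v_{i-1}$, so $(v_{i-1},v_i)$ is an edge of $G(F)$: if $v_{i-1}$ is a row then $v_i$ is a column with $v_i > v_{i-1}$, and if $v_{i-1}$ is a column then $v_i$ is a row with $v_i < v_{i-1}$, matching the claim in both subcases.

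The harder subcase is when the algorithm backtracks. Then $v_{i-1}$ is a leaf of the spanning tree $S(\clt)$, and $v_i$ is chosen as a child of the deepest ancestor $A$ of $v_{i-1}$ in $S(\clt)$ with an unvisited tree-child. Let $P$ denote the most recently visited child of $A$, so $v_{i-1}$ is the last visited vertex in the subtree rooted at $P$. The child-selection rule gives $v_i > P$ when $A$ is a row (the algorithm picks the smallest unvisited column, which is therefore larger than those previously chosen) and $v_i < P$ when $A$ is a column. The heart of the proof is a key lemma asserting that $v_{i-1}$ lies on the same side of $v_i$ as $P$, namely $v_{i-1} < v_i$ when $A$ is a row and $v_{i-1} > v_i$ when $A$ is a column. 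Together with the easy subcase this yields the theorem, since $v_i$ is a row precisely when $A$ is a column.

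To prove the key lemma I trace the ``last-child path'' in the DFS of the subtree rooted at $P$: starting at $P$ and at each step moving to the child the algorithm visits last, which is the largest column child at a row and the smallest row child at a column, ending at $v_{i-1}$. The crucial input is the tree-tableau condition from Theorem~\ref{thm:tree-tt}: every non-top-left dotted cell has a dot strictly above in its column or strictly to its left in its row, but not both. At each step along the last-child path, the extremality of the chosen child rules out one of the two possibilities for the corresponding dotted cell (for instance, at the largest column child of a row, no tree-edge beyond the parent column can supply a dot strictly to the left), and the tree-tableau condition then forces a specific inequality between the label of the parent and the next vertex of the path, which propagates the desired inequality with $v_i$. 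Iterating this along the whole path pins down the sign of $v_{i-1}-v_i$. The main obstacle will be the careful bookkeeping required to carry out this induction, handling both alternations (row$\to$column$\to$row$\to\cdots$ starting from $P$ a row, and column$\to$row$\to$column$\to\cdots$ starting from $P$ a column), and verifying in each configuration that the sign of the inequality is preserved and that the path can terminate only at a vertex on the required side of~$v_i$.
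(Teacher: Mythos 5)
Your skeleton is sound and in fact mirrors the paper's proof: the non-backtracking steps are the paper's Cases 1 and 2, and your ``key lemma'' for the backtracking step is true and equivalent to what the paper establishes in its Case 3. However, two layers are asserted rather than proved, and the second one is where the real content of the proposition lives. First, the DFS-structure claims: Algorithm~\ref{algo-tree-tab-perm} backtracks through \emph{all} visited vertices in reverse chronological order, not up an ancestor stack, so the assertions that the scan stops at the deepest ancestor $A$ of $v_{i-1}$ having an unvisited dotted cell, that $v_{i-1}$ lies in the subtree of the most recently visited child $P$ of $A$, and that it is reached from $P$ by the last-child path, all require an invariant (for instance: at every step, the only visited vertices with unvisited incident dots are weak ancestors of the current vertex, which also uses that the dotted cells form a spanning tree). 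These are provable but not free.

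The genuine gap is the proof of the key lemma itself. The propagation you sketch --- at each vertex of the last-child path, use extremality of the last-visited child together with the dot condition to force ``a specific inequality between the label of the parent and the next vertex'' --- does not close. At a row $u$ on the path with tree-parent column $c$, extremality only tells you the chosen cell is the leftmost \emph{child} cell of $u$; the bad case you must exclude is that this cell lies strictly to the left of the parent cell $(u,c)$ (equivalently, weakly left of column $v_i$). In that case the tree-tableau condition merely says the cell has a dot strictly above it in its column, which gives no inequality involving $v_i$ or $c$ and no local contradiction. Excluding it is exactly the statement that in a \tlt\ the tree-parent edge of a row is its leftmost dot and of a column its topmost dot, i.e.\ the zigzag property of tree paths, which the paper proves as Lemma~\ref{lem:zigzag} (by following up-left moves to the top-left corner and using uniqueness of paths in $S(\clt)$), and which it then exploits in a different form via the inner induction of Case 3 showing the new edge lies on the up-left edge path from the previous one. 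If you invoke Lemma~\ref{lem:zigzag}, your key lemma does follow quickly (along the last-child path every column label is at most $P<v_i$ and every row label is below its parent column, and dually when $A$ is a column); without it, the ``careful bookkeeping'' you defer is not bookkeeping but the missing central argument.
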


\begin{proof}
We show by induction on $i \geq 1$ that 
\begin{equation}\label{eq:desc_rows}
\mbox{$v_i$ is a descent of $\tau$ if and only if it is the label of a row of $F$.}
\end{equation} 
For $i=1$, by construction $e_1$ is the rightmost dotted cell in the top row $v_0$, and $v_1$ is the column containing $e_1$. Since $v_1$ is not a descent of $\tau$, this shows \eqref{eq:desc_rows} for $i=1$. Now fix $i \ge 1$,  and suppose that we have shown \eqref{eq:desc_rows} to hold for all $j \le i$. We say a dotted cell $e$ is \emph{included} if it is in $\{e_1,\dots,e_i\}$. We distinguish three cases.

\paragraph{Case 1} Suppose $v_i$ is a column, and there are some non-included dotted cells in $v_i$. Then $e_{i+1}$ is the bottommost of these dotted cells, and $v_{i+1}$ the row containing $e_{i+1}$. Thus $e_{i+1}$ is a cell of the Ferrers diagram $F$ in row $v_{i+1}$ and column $v_i$, so that by construction of the edge labels $v_{i+1} < v_i$, and $v_{i+1}$ is a descent, as desired.
\paragraph{Case 2} Suppose $v_i$ is a row, and there are some non-included dotted cells in~$v_i$. As above, $e_{i+1}$ is the rightmost of these cells, and it is a cell of $F$ in row $v_i$ and column $v_{i+1}$ so that $v_{i+1} > v_i$, and $v_{i+1}$ isn't a descent.
\paragraph{Case 3} Suppose all dotted cells of the row or column $v_i$ are included.
Let $e^{\rho}$ be the dotted cell in the top left corner of $\clt$. Given any dotted cell $e$, we can construct an ``edge path'' $e\rightarrow e^{\rho}$ by going from $e$ to the dotted cell immediately above it in its column or immediately to its left in its row (by definition of a \tlt\ exactly one of these exists), and iterating until we reach $e^{\rho}$.
We claim that the cell $e_{i+1}$ must be the first dotted cell on the path $e_i\rightarrow e^\rho$ that is not in $\{e_1,\ldots,e_{i-1}\}$. If $e_i$ is the first dot to be encountered with no cell below or to the right, then $e_i,\ldots,e_1$ forms the path $e_i \rightarrow e^\rho$ by definition. So $e_{i+1}$ is the first non-included dotted cell in $e_i\rightarrow e^\rho$. 

By induction suppose $e_i$ is the $k$-th cell to be encountered with no dots below and right and let $e_j$ be the $(k-1)$-th such cell, and assume the claim is true for~$e_j$. By construction $v_j,\ldots,v_{i}$ forms a path from $e_j$ to $e_{i}$, because in between these no other dotted cell is encountered with no dot below or to its right, so at each step we take the bottom or rightmost cell. Therefore, if the first non-included dotted cell we encounter, denoted $v_t$, is between $v_j$ and $v_{i}$ we take the bottommost or rightmost non-included dotted cell of $v_t$, which must be the first cell above or to the left of the previously included dotted cell of $v_t$, which implies $e_{i+1}$ is the first dot on the path from $e_i$ to $e^\rho$. If $t<j$, then by induction $e_{i+1}$ is the first non-included dotted cell on the path  $e_j\rightarrow e^\rho$, and the path $e_i\rightarrow e^\rho$ is the concatenation of $e_i\rightarrow e_j$, where we didn't encounter a non-included dotted cell, and $e_j\rightarrow e^\rho$. So $e_{i+1}$ must be the non-included dotted cell on the path $e_i\rightarrow e^\rho$.

Thus $e_{i+1}$ is on the path $e_i\rightarrow e^\rho$, which implies $e_{i+1}$ must be weakly northwest of $e_i$. Therefore, if $v_{i+1}$ is a column, then the label of $v_{i+1}$ is left of the label of $v_i$ so $v_{i+1}>v_i$, and so $v_{i+1}$ is not a descent bottom. If $v_{i+1}$ is a row, then the label of $v_{i+1}$ is above the label of $v_i$ so $v_{i+1}<v_i$, so $v_{i+1}$ is a descent bottom. This completes Case~3, and the proof.
\end{proof}

\begin{theorem}\label{thm:tree-ewt-bij}
  Let $F$ be a Ferrers diagram and let $M:\TT(F)\longrightarrow\EWT(F)$
  be the map taking $\clt$ to $\ewpmap^{-1}(\tau(\clt))$, where
  $\tau(\clt)$ is the permutation constructed from $\clt$ by
  Algorithm~\ref{algo-tree-tab-perm} and $\ewpmap$ is the map from
  Definition~\ref{phidef}. Then $M$ is a bijection from \tlts\ of shape $F$ to
  \ewts\ of shape $F$.
\end{theorem}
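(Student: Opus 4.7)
The plan is to establish the theorem in three steps: (i) $M$ is well-defined as a map $\TT(F)\to\EWT(F)$, (ii) $|\TT(F)|=|\EWT(F)|$, and (iii) $M$ is injective. Steps (ii) and (iii) together force bijectivity.

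For (i), Lemma~\ref{lem:well-def} ensures $\tau(\clt)$ is an $n$-permutation, and Proposition~\ref{pro:desc_rows} says its descent bottoms are exactly the nonzero row labels of $F$. By Proposition~\ref{prop-rows-desbot}, the row labels of the EW-tableau $\ewpmap^{-1}(\tau(\clt))$ are $\{0\}$ together with the descent bottoms of $\tau(\clt)$, hence coincide with the row labels of $F$. Since a Ferrers diagram of size $n$ is determined by the set of its row labels along the southeast border, $M(\clt)$ has shape $F$.

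For (ii), I would use the composition $\leewmap\circ\treelemap$ from Sections~\ref{sec-bij-new-tree} and~\ref{sec:bij}. The map $\treelemap$ sends $\TT(F)$ bijectively onto the set of \lts\ of shape $F$ minus its leftmost column (possibly with some empty bottom rows), and $\leewmap$ then appends a leftmost column of length equal to the full left border, restoring the shape $F$. Thus the composition restricts to a bijection $\TT(F)\to\EWT(F)$, giving the desired equality of cardinalities.

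For (iii), since $\ewpmap^{-1}$ is a bijection, it suffices to show that $\clt\mapsto\tau(\clt)$ is injective. I would proceed by induction on $i$ to prove that each dotted cell $e_i$ is determined by $\tau(\clt)$: the base case $e_1=(0,v_1)$ is immediate; in the inductive step, $e_i$ must equal $(\min(v_i,v_{j_i}),\max(v_i,v_{j_i}))$, since any cell of a Ferrers diagram has its row label smaller than its column label and $v_i,v_{j_i}$ are of opposite type, so only the index $j_i$ remains to be determined. The backtracking rule in Algorithm~\ref{algo-tree-tab-perm} forces $j_i$ to be the largest $j\le i-1$ of opposite type to $v_i$ for which declaring $(\min(v_i,v_j),\max(v_i,v_j))$ to be a dot is compatible both with the dots already placed and with the \tlt\ axioms. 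The main obstacle is verifying this compatibility claim, which couples the DFS-style exploration of the algorithm to the structural constraints defining \tlts; an equivalent route is to describe $M^{-1}$ directly by placing each $e_i$ via the rule just given, starting from $\pi=\ewpmap(\cle)$, and then verifying that the resulting filling is a \tlt\ of shape $F$ that returns $\pi$ under Algorithm~\ref{algo-tree-tab-perm}.
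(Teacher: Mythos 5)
Your steps (i) and (ii) are sound and consistent with the paper: well-definedness is obtained exactly as in the paper from Propositions~\ref{pro:desc_rows} and~\ref{prop-rows-desbot}, and your appeal to the shape-preserving composition $\leewmap\circ\treelemap$ of Sections~\ref{sec:bij} and~\ref{sec-bij-new-tree} supplies the equinumerosity $|\TT(F)|=|\EWT(F)|$ that the paper uses implicitly when it says injectivity suffices. The genuine gap is step (iii), which is the heart of the theorem and which you only outline: you yourself flag that the ``compatibility claim'' pinning down the backtrack index $j_i$ is unverified, and as formulated it is not even a well-specified condition. ``Compatible with the dots already placed and with the \tlt\ axioms'' could mean either that no axiom is violated among the cells placed so far, or that the partial placement extends to a full \tlt\ of shape $F$ whose run of Algorithm~\ref{algo-tree-tab-perm} reproduces the given prefix; these are different requirements, and the quantity the algorithm actually tests --- whether the line $v_j$ still contains an unused dot --- depends on the part of the (unknown) tableau not yet reconstructed. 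So nothing in your sketch shows that your candidate rule selects the same $j_i$ as the algorithm did, i.e.\ that $\tau(\clt)$ determines $\clt$; without that, injectivity is not established, and injectivity is precisely what the whole theorem rests on.

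For comparison, the paper proves injectivity without constructing an explicit inverse: it compares two runs of Algorithm~\ref{algo-tree-tab-perm} on tableaux $\clt\neq\clt'$, notes that the edge sequence $e_1,\ldots,e_n$ exhausts the $n$ dotted cells of the tableau (so distinct tableaux have distinct edge sequences), and argues from the construction that equality of the vertex prefixes $(v_0,\ldots,v_i)=(v_0',\ldots,v_i')$ is equivalent to equality of the edge prefixes $(e_1,\ldots,e_i)=(e_1',\ldots,e_i')$; hence distinct tableaux yield distinct permutations. The nontrivial direction of that equivalence is the same determinism statement you are after, but phrasing it as a comparison of two runs of the algorithm avoids having to guess and justify an explicit reconstruction rule for $e_i$ from the permutation alone. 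To complete your proof you would need to either prove such a prefix-determinism statement or make your compatibility criterion precise and show it reproduces the algorithm's choice at every step; the latter is closer in spirit to the (unproven) characterization $e_{i+1}=\max_{<_E}\{(w,w')\in S:\ w\notin\{v_0,\ldots,v_i\},\ w'\in\{v_0,\ldots,v_i\}\}$ mentioned in the paper's remark, which again refers to the tableau itself rather than only to the permutation.
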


\begin{proof}
Proposition~\ref{pro:desc_rows}, coupled with Proposition~\ref{prop-rows-desbot}, shows that the map $M$ is well defined. To show that $M$ is a bijection, it is sufficient to show that it is injective. In fact, it suffices to show that $\clt \mapsto \tau(\clt)$ is injective, since $\ewpmap$ is a bijection by Proposition~\ref{prop-bij}. Let $\clt,\clt'$ be two \tlts\ and write $v_0,\ldots,v_n$ (resp. $v_0',\ldots,v_n'$) and $e_1,\ldots,e_n$ (resp. $e_1',\ldots,e_n'$) for the order in which vertices and edges appear in the construction of $\tau$ (resp. $\tau'$). The construction implies that for any $i \geq 1$, $(v_0,\ldots,v_i) = (v_0',\ldots,v_i')$ if and only if $(e_1,\ldots,e_i) = (e_1',\ldots,e_i')$. Now $\clt \neq \clt'$ means that the sets of dotted cells of the two tableaux are different, and in particular $(e_1,\ldots,e_n) \neq (e_1',\ldots,e_n')$. This implies that $\tau(\clt) = v_1 \cdots v_n \neq v_1' \cdots v_n' = \tau(\clt')$, as desired.
\end{proof}

\begin{example}\label{ex:TLT_EWT1}
Let $ \clt $ be the \tlt\ on the left in Figure~\ref{fig:ex_tlt_ewt1}, with rows and columns labeled.
This is the same \tlt\ as in Example~\ref{ex:TLT_perm1} and in Figure~\ref{fig:ex_tlt_perm1}, and the corresponding permutation is $\tau=31254$. The \ewt\ on the right is constructed by first filling the top row with 1s, then reading $\tau$ from left to right, and filling in the remaining cells of $v_i$ with 1s if $v_i$ is the label of a row, and with 0s if it is the label of a column.

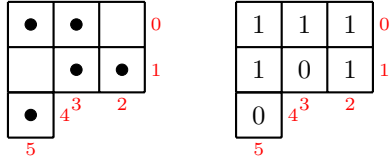
\begin{figure}[h]
  \centering
  \begin{tikzpicture}[scale=0.3]
    \draw[step=2cm,thick] (0,2) grid (6,6);
    \draw[step=2cm,thick] (0,0) grid (2,2);
    \node at (6.5,5) {\rlab{0}};
    \node at (6.5,3) {\rlab{1}};
    \node at (5,1.5) {\rlab{2}};
    \node at (3,1.5) {\rlab{3}};
    \node at (2.5,1) {\rlab{4}};
    \node at (1,-0.5) {\rlab{5}};
    \draw [fill=black] (1,1) circle [radius=0.25];
    \draw [fill=black] (1,5) circle [radius=0.25];
    \draw [fill=black] (3,3) circle [radius=0.25];
    \draw [fill=black] (3,5) circle [radius=0.25];
    \draw [fill=black] (5,3) circle [radius=0.25];

    \def\x{10}
    \draw[step=2cm,thick] (0+\x,2) grid (6+\x,6);
    \draw[step=2cm,thick] (0+\x,0) grid (2+\x,2);
    \node at (6.5+\x,5) {\rlab{0}};
    \node at (6.5+\x,3) {\rlab{1}};
    \node at (5+\x,1.5) {\rlab{2}};
    \node at (3+\x,1.5) {\rlab{3}};
    \node at (2.5+\x,1) {\rlab{4}};
    \node at (1+\x,-0.5) {\rlab{5}};
    \node at (1+\x,1) {$0$};
    \node at (3+\x,3) {$0$};
    \node at (1+\x,3) {$1$};
    \node at (1+\x,5) {$1$};
    \node at (3+\x,5) {$1$};
    \node at (5+\x,3) {$1$};
    \node at (5+\x,5) {$1$};
  \end{tikzpicture}
  \caption{An example of a \tlt\ $ \clt $ (left) and its corresponding \ewt\ $\clt '$ (right) via the bijection of Theorem~\ref{thm:tree-ewt-bij}, both corresponding to the permutation $\tau=31254$ \label{fig:ex_tlt_ewt1}.}
\end{figure}
\end{example}

\begin{remark}
The map in Theorem~\ref{thm:tree-ewt-bij} can be combined with the results of Theorems~\ref{thm:EWT_ASM} and \ref{thm:tree-tt} to obtain a bijection between spanning trees of the Ferrers graph $G=G(F)$ and minimal recurrent configurations of the ASM on~$G$. In fact, this bijection is equivalent to, but presented in a different form from, the bijection obtained by composing those of Lemmas 2 and 4 in~\cite{CLB}, restricted to spanning trees with no external activity, which map bijectively to minimal recurrent configurations. Indeed, it is possible to show that the construction 
in Algorithm~\ref{algo-tree-tab-perm} yields the following property. Let $<_E$ be any $F$-compatible total order on the edges of $G$. Then for all $i \geq 0$, $e_{i+1}$ is the largest edge, according to $<_E$, where the corresponding cell in $ \clt $ lies in exactly one column or row of $\{v_0,\ldots,v_i\}$, that is, 
$$e_{i+1} = \max\limits_{<_E} \{ (w,w') \in S: w \notin \{v_0,\ldots,v_i\}, w' \in \{v_0,\ldots,v_i\} \}.$$
\end{remark}

\begin{remark}\label{rem:diff_bij1}
The bijection from Theorem~\ref{thm:tree-ewt-bij} is different from the bijection in Section~\ref{sec-bij-new-tree}. Consider the \tlt\ on the left in Figure~\ref{fig:ex_diff_bij1}. The permutation $\tau$ obtained by Lemma~\ref{lem:well-def} is $\tau = 4231$, which leads to the \ewt\ in the middle of Figure~\ref{fig:ex_diff_bij1}. However, the \ewt\ obtained through the bijection of Section~\ref{sec-bij-new-tree} is the one on the right, and these are indeed different \ewts.
\end{remark}

\begin{figure}[h]
  \centering
  \begin{tikzpicture}[scale=0.3]
    \draw[step=2cm,thick] (0,0) grid (4,6);
    \node at (4.5,5) {\rlab{0}};
    \node at (4.5,3) {\rlab{1}};
    \node at (4.5,1) {\rlab{2}};
    \node at (3,-0.5) {\rlab{3}};
    \node at (1,-0.5) {\rlab{4}};
    \draw [fill=black] (1,1) circle [radius=0.25];
    \draw [fill=black] (1,5) circle [radius=0.25];
    \draw [fill=black] (1,3) circle [radius=0.25];
    \draw [fill=black] (3,1) circle [radius=0.25];
    
    \def\x{8}
    \draw[step=2cm,thick] (0+\x,0) grid (4+\x,6);
    \node at (4.5+\x,5) {\rlab{0}};
    \node at (4.5+\x,3) {\rlab{1}};
    \node at (4.5+\x,1) {\rlab{2}};
    \node at (3+\x,-0.5) {\rlab{3}};
    \node at (1+\x,-0.5) {\rlab{4}};
    \node at (1+\x,1) {$0$};
    \node at (3+\x,3) {$0$};
    \node at (1+\x,3) {$0$};
    \node at (3+\x,1) {$1$};
    \node at (1+\x,5) {$1$};
    \node at (3+\x,5) {$1$};
    \node at (2+\x,-2) {$4231$};
    
    \def\y{16}
    \draw[step=2cm,thick] (0+\y,0) grid (4+\y,6);
    \node at (4.5+\y,5) {\rlab{0}};
    \node at (4.5+\y,3) {\rlab{1}};
    \node at (4.5+\y,1) {\rlab{2}};
    \node at (3+\y,-0.5) {\rlab{3}};
    \node at (1+\y,-0.5) {\rlab{4}};
    \node at (1+\y,1) {$0$};
    \node at (3+\y,3) {$1$};
    \node at (1+\y,3) {$0$};
    \node at (3+\y,1) {$1$};
    \node at (1+\y,5) {$1$};
    \node at (3+\y,5) {$1$};
    \node at (2+\y,-2) {$4213$};
  \end{tikzpicture}
  \caption{An example of a \tlt\ $ \clt $ (left), its corresponding \ewt\ $\clt'$ and permutation via the bijection of Theorem~\ref{thm:tree-ewt-bij} (center), and the corresponding \ewt\ and permutation via the bijection of Section~\ref{sec-bij-new-tree}.
The two \ewts\ are different. \label{fig:ex_diff_bij1}}
\end{figure}
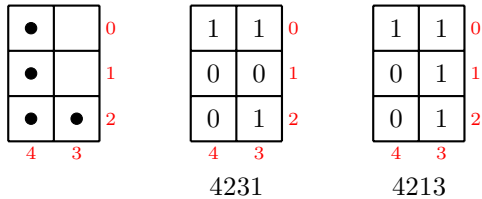

\begin{question}
Although the \ewts\ in Figure~\ref{fig:ex_diff_bij1} are different, their corresponding permutations have a common prefix. In Example~\ref{ex:TLT_EWT1} we get the same \ewts\ and thus the same permutation for both bijections, namely 31254.  It is easy to see that the first letter of the corresponding permutation  is always the same for both bijections; the question is what determines the maximal common prefix. In particular, can we find conditions under which the two bijections give the same \ewts\ (and permutations)?
\end{question}

\section{Statistics and properties of \ew- and \newts\ and their corresponding permutations}\label{sect-stats}

In this section we collect various results on \ew- and \newts, and their corresponding permutations, and statistics on these.  We also mention two open problems and one conjecture that we have been unable to resolve.

\begin{proposition}\label{ew-fixed} There is a one-to-one correspondence between all-1 columns in a \ew- or \newt\ $\clt$ and fixed points in the permutation $\desexc(\ewpmap(\clt))$.
\end{proposition}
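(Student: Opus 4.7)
The plan is to characterize fixed points of $\sigma = \desexc(\pi)$ combinatorially in terms of $\pi = \ewpmap(\clt)$, and then match them to all-$1$ columns of $\clt$ using Corollary~\ref{invbij} (or Theorem~\ref{props-new} for \newts).

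First I would analyze Proposition~\ref{de-bij} to show that $b_m = m$ in $\sigma$ if and only if $m = a_i$ for some letter $a_i$ of $\pi = a_1\cdots a_n$ (with the convention $a_0 = 0$) that is a right-to-left minimum of $0a_1\cdots a_n$ and satisfies $a_{i-1} < a_i$. Indeed, Case~1 of Proposition~\ref{de-bij} places $a_i$ at position $a_{i+1} \neq a_i$ and therefore never produces a fixed point, so every fixed point must come from Case~2, which places a right-to-left minimum $a_i$ at position $a_{k+1}$ where $a_k$ is the rightmost letter smaller than $a_i$ (with $k<i$). The equality $a_{k+1} = a_i$ then forces $k = i-1$, and hence $a_{i-1} < a_i$. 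By Lemma~\ref{desbot-lemma} (or Theorem~\ref{props-new} for \newts), the letters of $\pi$ satisfying $a_{i-1} < a_i$, together with $a_1$, are exactly the column labels of $\clt$; so fixed points of $\sigma$ correspond to column labels that appear as right-to-left minima of $\pi$.

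The core step is then: a column $c$ of $\clt$ is all-$1$ if and only if its label is a right-to-left minimum of $\pi$. Direction $(\Leftarrow)$ is quick from Corollary~\ref{invbij}: if some cell $(r,c)$ were $0$, then $c$ would precede $r$ in $\pi$, but a row $r$ intersecting column $c$ satisfies $r < c$ as labels, so $r$ appearing after $c$ would contradict $c$ being a right-to-left minimum.

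The main obstacle is the direction $(\Rightarrow)$. Assume $c$ is all-$1$ and some label $\ell < c$ appears after $c$ in $\pi$. If $\ell = r$ is a row label, then $r$ intersects $c$ and Corollary~\ref{invbij} gives $(r,c) = 0$, contradicting the all-$1$ hypothesis. The subtler case is $\ell = c'$ a column label with $c' < c$: here the rows intersecting $c'$ (those with labels $<c'$) form a subset of the rows intersecting $c$, so $c'$ becomes $1$-free no later than $c$ during the algorithm of Definition~\ref{phidef}. Since step~\ref{phi1} reads $1$-free columns in increasing order of labels within each pass, $c'$ would have to be read before $c$, contradicting that $c'$ appears after $c$ in $\pi$. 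This yields the explicit bijection sending an all-$1$ column with label $c$ to the fixed point at position $c$, and the \newt\ case follows by the same argument using Theorem~\ref{props-new} in place of Corollary~\ref{invbij}.
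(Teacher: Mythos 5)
Your argument is correct and takes essentially the same route as the paper's proof: both characterize the fixed points of $\desexc(\ewpmap(\clt))$ as the letters that are right-to-left minima of $\pi$ and are preceded by a smaller letter (hence column labels, by Lemma~\ref{desbot-lemma}), and both then use the reading order of Definition~\ref{phidef} together with Corollary~\ref{invbij} to show these letters are exactly the labels of the all-$1$ columns. The only caveat, which the paper's own proof shares and which is acknowledged only in the proof of Proposition~\ref{all0rows}, concerns \newts: there the first letter $a_1$ of $\pi$ is a row label even though $a_0=0<a_1$, so the letters with $a_{i-1}<a_i$ are the column labels together with $a_1$, and the letter $1$ can be a fixed point of $\desexc(\pi)$ with no corresponding all-$1$ column (namely when the top row is the unique $0$-free row).
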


\begin{proof} Given an all-1 column $C$, we need to show that when constructing the permutation $\pi=\ewpmap(\clt)$ the label $\ell$ of column $C$ is read after all rows and columns with smaller labels have been read and that the label read just before $\ell$ is smaller than $\ell$.  These two conditions imply, according to the definition of the map $\desexc$ in Proposition~\ref{de-bij}, that $\ell$ will be in place $\ell$ in $\desexc(\pi)$.

  The (label of a) column $C$ is read only after all its 1s have been deleted, so if $C$ is an all-1 column, all the rows with smaller labels will have been read before $C$ is read.  When that has been done, since columns are read from right to left, next all unread columns to the right of $C$ will be read, since they will have been cleared of 1s, just as $C$.  Thus all smaller labels will be read before $\ell$, and one of them will be the last one read before $\ell$.

Conversely, if $\ell$ is a fixed point of $\desexc(\ewpmap(\clt))$, then the label read just before $\ell$ in $\pi=\ewpmap(\clt)$ is smaller than $\ell$, so that $\ell$ is the label of a column $C$. Moreover, all row labels that are smaller than $\ell$ are read before $\ell$ in $\pi$, so that by Corollary~\ref{invbij} the column $C$ is all 1s.
\end{proof}

A  \emph{decreasing adjacency} in a permutation is a pair of adjacent letters $a_ia_{i+1}$ such that $a_i=a_{i+1}+1$. For example, 65478132 has three decreasing adjacencies, 65, 54 and 32. 
\begin{lemma}\label{fix-adj}
  The number of fixed points different from 1 has the same distribution on $n$-permutations as does the number of decreasing adjacencies.
\end{lemma}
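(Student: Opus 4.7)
The plan is to prove equidistribution by deriving matching explicit formulas for the two statistics and checking that they agree. Let $g_n(k)$ be the number of $n$-permutations with exactly $k$ fixed points other than~$1$, and $s_n(k)$ the number of $n$-permutations with exactly $k$ decreasing adjacencies; I will show $g_n(k)=s_n(k)$ for all $k$. Throughout, $D_m$ denotes the number of derangements of~$[m]$.

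First I would compute $g_n(k)$ by splitting on whether $1$ is itself a fixed point. If $\pi(1)=1$, choose the $k$ additional fixed points from $\{2,\ldots,n\}$ in $\binom{n-1}{k}$ ways and derange the remaining $n-k-1$ letters, contributing $\binom{n-1}{k}D_{n-k-1}$; if $\pi(1)\ne 1$, choose the $k$ fixed points in $\{2,\ldots,n\}$ and derange the other $n-k$ letters (including position~$1$), contributing $\binom{n-1}{k}D_{n-k}$. Adding, $g_n(k)=\binom{n-1}{k}(D_{n-k-1}+D_{n-k})$.

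For $s_n(k)$ I would use the block decomposition of $\pi$ into its $n-k$ maximal decreasing runs of consecutive integers. The underlying value-sets of these blocks are $n-k$ consecutive intervals that partition $[n]$, determined by a choice of $k$ of the $n-1$ gaps, giving a factor of $\binom{n-1}{k}$. The blocks then appear in some order within $\pi$, and relabeling them by their natural order produces a permutation $\tau$ of $[n-k]$. The condition that no two consecutive blocks merge into a longer decreasing run (which would contradict maximality of the runs) translates precisely into $\tau$ having no decreasing adjacency, so there are $d_{n-k}$ valid orderings, where $d_m$ is the number of $m$-permutations with no decreasing adjacency. Hence $s_n(k)=\binom{n-1}{k}\,d_{n-k}$.

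Finally I would prove $d_m=D_m+D_{m-1}$ by inclusion-exclusion: for each $i\in[m-1]$ let $A_i$ be the set of permutations in which $i+1$ is immediately followed by~$i$; gluing forced pairs gives $|A_{i_1}\cap\cdots\cap A_{i_l}|=(m-l)!$, so $d_m=\sum_{l=0}^{m-1}(-1)^l\binom{m-1}{l}(m-l)!$, and applying Pascal's identity $\binom{m-1}{l}=\binom{m}{l}-\binom{m-1}{l-1}$ rearranges this into $D_m+D_{m-1}$. Combining everything yields $s_n(k)=\binom{n-1}{k}(D_{n-k}+D_{n-k-1})=g_n(k)$, as required. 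The main subtlety is the block-decomposition step, where one must check carefully that the merging of two consecutive blocks in $\pi$ corresponds exactly to a decreasing adjacency in the relabeled block-order $\tau$; the remaining steps are routine inclusion-exclusion calculations.
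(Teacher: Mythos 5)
Your proof is correct, but it takes a genuinely different route from the paper. The paper's argument is a two-line bijection: the map $\desexc$ sends each decreasing adjacency to a ``minimal excedance'' (an excedance whose top exceeds its bottom by exactly one), and the cyclic right shift $\cycr$ then turns each minimal excedance into a fixed point, with every fixed point other than $1$ arising in this way; thus the composition $\cycr\circ\desexc$ carries the statistic ``number of decreasing adjacencies'' to ``number of fixed points different from $1$'' directly, using machinery already set up in the paper. You instead prove equidistribution enumeratively: both counts equal $\binom{n-1}{k}\bigl(D_{n-k}+D_{n-k-1}\bigr)$, via the split on whether $1$ is fixed on one side, and on the other side the decomposition into maximal decreasing runs of consecutive values (choice of $k$ of the $n-1$ value-gaps, times an ordering $\tau$ of the $n-k$ blocks with no decreasing adjacency) together with the inclusion--exclusion identity $d_m=D_m+D_{m-1}$. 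Your key block-decomposition step is sound: the only possible decreasing adjacency across a block boundary is exactly a merge of two value-adjacent blocks, which corresponds precisely to a decreasing adjacency in $\tau$, so the map is a bijection and the count $\binom{n-1}{k}\,d_{n-k}$ is right; the Pascal-identity manipulation also checks out. What each approach buys: yours is self-contained and elementary, requiring none of the $\desexc$/$\cycr$ apparatus, and yields closed formulas as a by-product; the paper's is much shorter and gives an explicit statistic-preserving bijection, which fits the bijective theme of the surrounding results (only equidistribution is needed later, so both suffice for the application in Proposition~\ref{all0rows}).
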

\begin{proof}
  The bijection $\desexc$ translates a decreasing adjacency to a ``minimal excedance'', that is, an excedance whose top is one greater than its bottom.  A cyclic right shift of the resulting permutation $\pi$ takes each such minimal excedance to a fixed point.  All fixed points in $\pi$ arise this way, except 1, if 1 is a fixed point.
\end{proof}

\begin{proposition}\label{all0rows} The distribution of the number of all-zero-rows in \ewts\ of size  $n$ is the same as the distribution of fixed points different from 1 in $n$-permutations, and the same as the distribution of decreasing adjacencies on such permutations.
\end{proposition}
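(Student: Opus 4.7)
I would reduce the statement to a \newt\ version of Proposition~\ref{ew-fixed} by first transferring the ``all-zero row'' statistic across the bijection between \ewts\ and \newts\ described in the paragraph preceding Definition~\ref{def-new}: given $\clt$, reflect it along its NW-SE diagonal, swap $0$s and $1$s, and delete the resulting all-zero leftmost column to obtain a \newt\ $\clt'$. Under this map each non-top all-zero row of $\clt$ becomes an all-one column of $\clt'$, and the correspondence is a bijection (only the column arising from the all-one top row of $\clt$ is deleted). Hence the number of all-zero rows of $\clt$ equals the number of all-one columns of $\clt'$.

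Next I would count all-one columns of $\clt'$ via an adaptation of Proposition~\ref{ew-fixed} to the \newt\ setting. The forward direction of that proof carries over verbatim: an all-one column of $\clt'$ with label $\ell$ is read only after every smaller label and is preceded in $\pi = \ewpmap(\clt')$ by a smaller letter, so $\ell$ is a fixed point of $\desexc(\pi)$. For the converse, a fixed point must be preceded in $\pi$ by a smaller letter; by the \newt\ analogue of Lemma~\ref{desbot-lemma} established in the proof of Theorem~\ref{props-new}, every row label \emph{other} than the very first letter of $\pi$ is preceded by a larger letter, while column labels are always preceded by smaller letters. Hence a fixed point is either a column label (in which case the standard converse argument shows the corresponding column is all-one) or the first letter of $\pi$ itself. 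By Theorem~\ref{props-new} part~1 the first letter equals the position of the letter~$1$ in $\desexc(\pi)$, so it is itself a fixed point precisely when it equals~$1$; and since the border labeling assigns label~$1$ to a row in any \newt, this exceptional fixed point always has value~$1$ and is disjoint from those arising from all-one columns. Therefore the number of all-one columns of $\clt'$ equals the number of fixed points of $\desexc(\pi)$ different from~$1$.

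Finally, $\desexc\circ\ewpmap$ is a bijection from \newts\ of size $n$ to $\clsn$ (Theorem~\ref{props-new} together with Proposition~\ref{de-bij}), so the distribution of the number of all-one columns over \newts\ of size $n$ coincides with the distribution of the number of fixed points different from~$1$ over $\clsn$. Chaining this with the first step yields the first equality in the proposition; the second equality is exactly Lemma~\ref{fix-adj}. The main obstacle is the converse analysis in the second step: one must verify that the single ``extra'' fixed point coming from the first letter of $\pi$---which, unlike in the \ewt\ case, is a row label---is always precisely the value~$1$, so that it is exactly the fixed point removed in passing from fixed points to fixed points different from~$1$.
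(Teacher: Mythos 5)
Your proposal is correct and follows essentially the same route as the paper: pass from all-$0$ rows of a \ewt\ to all-$1$ columns of the reflected-and-complemented \newt, identify those columns with fixed points of $\desexc(\ewpmap(\cdot))$ different from $1$, and finish with Lemma~\ref{fix-adj}. Your careful analysis of the exceptional fixed point coming from the first letter of $\pi$ (showing it can only be the value $1$, since label $1$ is always the top row of a \newt) is exactly the point the paper's proof states tersely as ``$1$ is a fixed point if and only if the top row, and no other row, is all $1$s.''
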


\begin{proof}
An \az\ row in a \ewt\ $\clt$ corresponds to an \ao\ column in the \newt\ $\cls=\tr(\clt)$, where $\tr$ is the diagonal reflection of $\clt$ described before Definition~\ref{def-new}. The \ao\ columns in $\cls$ correspond one-to-one to fixed points in 
$\desexc(\ewpmap(\cls))$ except that 1 is a fixed point if and only if the top row of $\cls$, and no other row, is all 1s. The latter part of the claim therefore follows from Lemma~\ref{fix-adj}.
\end{proof}

\begin{proposition}\label{paths} The number of vertices in a longest directed path in a unique-sink acyclic orientation of a Ferrers graph $\clf$ corresponding to a \ewt\ $\clt$ equals the number of blocks in the run-decomposition of\/ $\pi=\ewpmap(\clt)$.  Moreover, any directed path in such an orientation contains at most one vertex label from each such block in $\pi$.
\end{proposition}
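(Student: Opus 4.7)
The plan is to read the blocks of the run-decomposition of $\pi=\ewpmap(\clt)$ as the layers of a distance stratification on the associated oriented Ferrers graph $\clo$, so that the block index of a vertex records the length of a longest directed path from it down to the sink $t_0$. Recall from the paragraph following Lemma~\ref{4-cycle} that an edge $(t_i,b_j)$ of the Ferrers graph is oriented $t_i\to b_j$ precisely when the cell at $(i,j)$ of $\clt$ contains a $0$ (and $b_j\to t_i$ when it contains a $1$), and that the three conditions of Definition~\ref{def:ewt} translate into $t_0$ being the unique sink of $\clo$. The algorithm of Definition~\ref{phidef} is then iterative sink-removal on $\clo$ after first setting aside $t_0$: by Lemma~\ref{well-def} together with condition~\ref{ewt2} of Definition~\ref{def:ewt}, each pass strips off exactly one complete layer of current sinks, alternating between the $b_j$ (1-free columns) and the $t_i$ (0-free rows), and by Lemma~\ref{run-decomp} these successive passes are exactly the blocks of the run-decomposition.

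For each vertex $v\neq t_0$ let $d(v)$ denote the number of edges on a longest directed path in $\clo$ from $v$ to $t_0$, and put $d(t_0):=0$. I would prove by induction on $k$ that the $k$-th block $B_k$ of the run-decomposition equals $\{v:d(v)=k\}$. The base case $k=1$ is immediate: a vertex $v$ is a sink of $\clo\setminus\{t_0\}$ if and only if its unique out-neighbor in $\clo$ is $t_0$, equivalently $d(v)=1$. For the inductive step, assuming the claim for all indices less than $k$, every $v$ with $d(v)<k$ has already been removed before pass $k$ begins, and at that moment $v$ is a current sink if and only if every out-neighbor $u$ of $v$ satisfies $d(u)\le k-1$ with at least one $u$ achieving $d(u)=k-1$---equivalently, $d(v)=k$.

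Both conclusions of the proposition then drop out. For the ``moreover'' half, whenever $v\to u$ is a directed edge of $\clo$ one has $d(v)\ge d(u)+1$, so along any directed path the $d$-values, and hence the block indices, are strictly decreasing; in particular each block contributes at most one vertex to any directed path. For the first half, let $K$ denote the total number of blocks, so that $K=\max_{v\neq t_0}d(v)$, and pick $v^*$ with $d(v^*)=K$; by repeatedly choosing an out-neighbor of the current vertex whose $d$-value drops by exactly $1$, we build a directed path $v^*=w_K\to w_{K-1}\to\cdots\to w_1\to t_0$ that meets each of the $K$ blocks in exactly one vertex before terminating at the sink, realizing the ``one vertex per block'' bound as an equality. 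The only real subtlety in the write-up is bookkeeping around $t_0$, which anchors the distance stratification but sits outside every block of the run-decomposition; this is handled uniformly via the ``disregard the label~$0$'' convention of Definition~\ref{phidef}.
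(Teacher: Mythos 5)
Your argument is correct, and it proves the proposition by a genuinely different route from the paper. You characterize the blocks structurally, as the level sets of the height function $d(v)=$ length of a longest directed path from $v$ to the sink $t_0$, by viewing the algorithm of Definition~\ref{phidef} as iterated removal of all current sinks; both halves of the proposition then fall out at once ($d$ strictly decreases along edges, and a greedy descent from a vertex of maximal height realizes one vertex per block). The paper instead argues the two bounds separately and more combinatorially: Lemma~\ref{desbot-lemma} gives that each block is purely rows or purely columns, Corollary~\ref{invbij} gives that every directed edge points to a vertex read \emph{earlier} in $\pi$ (so positions in $\pi$, hence blocks, strictly decrease along a path), and the matching lower bound comes from an explicit path through the rightmost letter of each block, using the Ferrers shape to guarantee the needed cells exist ($c>r$ forces a cell in column $c$, row $r$, etc.). Your level-set identification is arguably the more conceptual statement (it says exactly which vertices lie at each height), while the paper's version avoids any induction on the toppling/peeling process. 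One spot in your write-up deserves an explicit line: the claim that each pass removes a \emph{complete} layer of current sinks is not quite a consequence of Lemma~\ref{well-def} and condition~\ref{ewt2} of Definition~\ref{def:ewt} alone — you must rule out a wrong-type sink lingering through a pass (e.g.\ a row that is already a sink when a column pass is performed). This is true, either because deleting columns never creates a new column sink and deleting rows never creates a new row sink (with condition~\ref{ewt2} ensuring no row sink initially), or by parity: $d$ is odd on column vertices and even on row vertices, matching the strict alternation of passes from Lemma~\ref{desbot-lemma}; without this, your induction $B_k=\{v: d(v)=k\}$ would not close. Finally, note that your constructed longest path ends with the extra vertex $t_0$, which lies in no block; the count in the proposition is of labelled vertices, and on this bookkeeping point your proof and the paper's are in exactly the same position (the paper's path simply stops before $t_0$), so your explicit caveat about the label-$0$ convention is appropriate rather than a defect.
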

\begin{proof}
  As pointed out just after the proof of Lemma~\ref{4-cycle}, a \ewt\ $\clt$ corresponds
uniquely to an acyclic orientation (with a unique sink) of the corresponding Ferrers graph $\clf$.  More precisely, a 1 in column $c$ and row $r$ corresponds to an edge from the vertex labeled $c$ to the vertex labeled $r$ in $\clf$, a 0 to that edge being oriented the other way.  A directed path through $\clf$  must alternate between ``top'' and ``bottom'' vertices in the bipartite graph $\clf$, corresponding, respectively, to row and column labels of $\clt$, and the contents of the cells in~$\clt$ corresponding to successive row  and column labels must alternate between 0 and 1.  

It follows from Lemma~\ref{desbot-lemma} that each block in the run-decomposition of $\pi=\ewpmap(\clt)$ contains only row labels or only column labels, and so a directed path in $\clf$ must alternate between column and row label blocks in $\pi$.  We claim that if there is an edge from a vertex $u$ to a vertex $v$ then $v$ must precede $u$ in $\pi$, which implies that a path can have at most one vertex from each block. 
Indeed, if $u$ labels a row and there is an edge from $u$ to $v$, then $v$ is a column and the entry in the cell $(u,v)$ is $0$, so that by Corollary~\ref{invbij} $v$ is read before $u$ in $\pi$. An analogous argument shows that if $u$ is a column label then the row label $v$ must have been read before $u$.

We claim that the sequence consisting of the rightmost letter in each block in the run-decomposition of $\pi$ gives the labels of vertices constituting a directed path in $\clf$, when we read that sequence backwards, which will complete the proof.  

When a column label is read, a 0 is cleared from all rows with a cell in that column whose labels are yet to be read; when a row label is read, a 1 is cleared from all columns with a cell in that row whose labels are yet to be read.  The rightmost label in a block of column labels is the largest of them; the rightmost label in a block of row labels is the smallest of them.  Thus, if $c$ is the last label of a column in a block and $r$ the last label of a row in the following block, then $c>r$, which implies that there is a cell in column $c$ and row $r$, and that cell must have a 0, since $c$ is read before $r$,
implying that there is an edge from vertex $r$ to vertex $c$ in $\clf$.  An analogous argument shows that $r<c'$, where $c'$ is the rightmost label in next block after that of $r$, and that there is a cell with a 1 in column $c'$ and row~$r$.
\end{proof}

\begin{proposition}\label{one0fib} The number of \ewts\ of size $n$ with exactly one 0 in each non-top row is the Fibonacci number $F_{2n-2}$.  More precisely, the number of such tableaux whose first row has length $k$ is $\binom{2n-1-k}{k-1}$.
\end{proposition}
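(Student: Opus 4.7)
The plan is to first reduce the combinatorial condition to a clean description of valid $0$-placements on a fixed shape, then count per shape, and finally sum over shapes using generating functions. Write $m=n-k$ for the number of non-top rows and $\lambda_1\geq\cdots\geq\lambda_m\geq 1$ (with $\lambda_1\leq k$) for their lengths; since each non-top row has exactly one~$0$, any such tableau is determined by the shape together with the column $c_i$ of the $0$ in row $i$.

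My first step will be to show that condition~\ref{ewt3} forces $(c_i)$ to be weakly decreasing, with the added property that whenever $c_i>c_j$ for $i<j$, one has $\lambda_j<c_i$. If $c_i\ne c_j$ with $i<j$, the cells $(i,c_i),(i,c_j),(j,c_i),(j,c_j)$ would form a forbidden rectangle (two diagonally opposite $0$s, two $1$s in the other corners) unless one of the off-diagonal cells lies outside the shape; since $\lambda_i\geq\lambda_j$, the cell $(i,c_j)$ always exists, so $(j,c_i)$ must not, giving $c_i>\lambda_j\geq c_j$.

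Next I will partition the non-top rows into maximal blocks on which $c$ is constant. Setting $d_i=\lambda_i-\lambda_{i+1}$ with $\lambda_0=k$ and $\lambda_{m+1}=0$, the above characterization says a block ending at row $b$ has its common $c$-value in $(\lambda_{b+1},\lambda_b]$, giving $d_b$ choices; row $m$ necessarily ends a block, while each internal row $i\in\{1,\dots,m-1\}$ is independently a block-boundary (factor $d_i$) or not (factor~$1$). Hence, for each fixed shape,
\[
C(\lambda)=d_m\prod_{i=1}^{m-1}(1+d_i).
\]
Summing over shapes amounts to summing over weak compositions $(d_0,\dots,d_m)$ of $k$ with $d_m\geq 1$ and the other $d_i\geq 0$. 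Using the one-variable generating functions $\frac{1}{1-x}$ for $d_0$, $\frac{1}{(1-x)^2}$ for each of the $m-1$ middle $d_i$ (since $\sum_{d\geq 0}(1+d)x^d=\frac{1}{(1-x)^2}$), and $\frac{x}{(1-x)^2}$ for $d_m$, the product collapses to $\frac{x}{(1-x)^{2m+1}}$, whose coefficient of $x^k$ is $\binom{2m+k-1}{k-1}=\binom{2n-1-k}{k-1}$, yielding the refined count. Summing over $k$ via the classical identity $\sum_{j\geq 0}\binom{N-j}{j}=F_{N+1}$ then gives the Fibonacci number $F_{2n-2}$ in the indexing used here.

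The main obstacle is the first step: translating the ``no bad rectangle'' axiom into the ``weakly decreasing $c$ with synchronized drops'' description requires careful bookkeeping about which cells of the Ferrers shape exist (and so are eligible to be corners of a forbidden rectangle). Once that characterization is in hand, the block decomposition and the generating function computation are routine; the key trick is to encode shapes by the drop sequence $(d_i)$ rather than by $(\lambda_i)$, which makes the per-shape count factor and the subsequent sum telescope.
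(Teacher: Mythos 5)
Your plan is correct, and I checked the key steps: the forbidden-rectangle axiom does reduce, for one-$0$-per-row fillings, to exactly the condition that the $0$-columns $c_i$ are weakly decreasing and that $c_i>\lambda_j$ whenever $c_i>c_j$ with $i<j$ (the top row can never participate in a bad rectangle, and the only candidate rectangles have their $0$-corners at the unique $0$s of two rows); this makes the per-shape count $d_m\prod_{i=1}^{m-1}(1+d_i)$ valid, and the generating-function sum over drop sequences indeed gives $[x^k]\,x/(1-x)^{2m+1}=\binom{2n-1-k}{k-1}$ with $m=n-k$, then Fibonacci by the standard identity (you should state the convention $F_0=F_1=1$ to match the claim, and note the degenerate case $m=0$, i.e.\ $k=n$, separately, where the count is trivially $1$ and still agrees). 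This is a genuinely different route from the paper's proof: the paper argues that a $0$-minimal tableau is determined by the entries in the bottom cells of columns grouped by rows, and then encodes the tableau bijectively as a binary word of length $2n$ (read off the NE--SW border edges together with those entries) with no two adjacent $1$s, so that the refined count is the classical number $\binom{2n-1-k}{k-1}$ of such sparse words with $k-1$ ones and the total is Fibonacci. Your approach buys an explicit per-shape formula and a short computational summation (and would easily yield further refinements by shape), while the paper's buys an explicit bijection onto a Fibonacci-counted family without any summation; the main thing your write-up still owes is the careful statement that your characterization is an equivalence (both necessity and sufficiency of the ``synchronized drops'' condition), which is what justifies that the block-by-block choices are exactly the valid fillings.
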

\begin{proof}  Call a \ewt\ 0-minimal if it has just one zero in each non-top row. Such a tableau is completely determined by the letters in each maximal set of adjacent cells at the bottom of columns in the same row, at most one of which letters can be a 0, and exactly one of which must be 0 in the case of the bottom row. 
This is because the cells directly above such a maximal set of cells can only have a 0 if there is a 0 in the bottom cell of the same column, since otherwise we would violate condition \ref{ewt3} in Definition \ref{def:ewt}. Therefore, every cell above a $1$ must be filled with a $1$ and every cell above a $0$ must be filled with a $0$ until a row that already contains a $0$ is encountered and then everything from that row up must contain a $1$.

Given a filling of each of the sets of maximal adjacent cells mentioned above, with at most one 0 in each set and exactly one 0 in the bottom row, we construct a binary string determined by the orientations of the edges on the NE-SW border of the tableaux and the content of their adjacent cells, starting from the edge just after the vertical edge of the top row.  If the first edge is horizontal we write 01 for each edge in the maximal initial sequence of horizontal edges.  For any other set of adjacent horizontal edges we write, starting from the right, 10 for an edge if there is a 1 in the cell above it, until we see a 0 (if at all), at which point we write 01.  For each of the remaining edges in the same maximal set of edges (that are at the same level) we write 01.  For each vertical edge we write 00. Note that we read the edges of the tableau from right to left, but construct the string from left to right.

It is easy to see that the binary string thus constructed begins with a 0, ends with 01 and has no adjacent 1s.  It is also easy to verify the following.  Given a binary string $w$ of length $2n-3$ with no adjacent 1s, let $\widehat{w}=0w01$.  Then $\widehat{w}$ is the binary string, of length $2n$, corresponding, as described above, to a unique \ewt\ of size $n$ with a single 0 in each non-top row.  Moreover, if $\widehat{w}$ has $k$ occurrences of 1 (so $w$ has $k-1$ occurrences of 1)  then $\widehat{w}$ corresponds to such a tableau with top row of length $k$.  It is easy to show that the number of binary strings of length $2n-3$ with $k-1$ occurrences of 1, none of them adjacent, is $\binom{2n-1-k}{k-1}$, and it is well known that the sum of those binomial coefficients, over all integers $k$, is the Fibonacci number $F_{2n-2}$, if $F_0=F_1=1$.
\end{proof}

A permutation $\pi$ is said to \emph{contain the pattern $p$}, where $p$ is also a permutation, if $\pi$ has a subsequence of letters that appear in the same order of size as the letters of $p$, and $\pi$ is said to \emph{avoid $p$} if $\pi$ has no such occurrence.  For example, $\pi=351624$ contains the pattern 231 in the subsequence 562, but $\pi$ avoids 321, as it has no decreasing subsequence of length 3.

\begin{proposition}\label{pro:231_pattern}
Let $\clt$ be a \ewt\ with at least two rows, and $\pi=\newpmap(\clt)$ the corresponding
permutation. If $\pi$ avoids $231$, then the second row of $ \clt $ has a unique~$0$.
\end{proposition}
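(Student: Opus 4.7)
My plan is to prove the contrapositive: if the second row of $\clt$ has at least two $0$s, then $\pi=\newpmap(\clt)$ contains the pattern $231$; combined with the definitional fact that every non-top row has at least one $0$, this will establish the uniqueness. I would let $e_1$ denote the label of the second row and suppose it has $0$s in columns with labels $c_1<c_2$. From the southeast-border labeling, a column with label $c$ meets a row with label $r$ precisely when $c>r$, so in particular $e_1<c_1<c_2$, and by Corollary~\ref{invbij} both $c_1$ and $c_2$ precede $e_1$ in $\pi$.

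If $c_1$ precedes $c_2$ in $\pi$, then $c_1,c_2,e_1$ appear in this positional order with $e_1<c_1<c_2$, immediately giving a $231$-pattern. The real work lies in the case when $c_2$ precedes $c_1$. Lemma~\ref{desbot-lemma} forces $c_1$ and $c_2$ to lie in different column blocks of $\pi$, with $c_2$'s block first. I would argue that when $c_2$'s block is read in the algorithm of Definition~\ref{phidef}, column $c_1$ must still contain a $1$ in the current sub-tableau (otherwise $c_1$ would also be read at that pass), and this produces a row label $r$, not yet read at that moment, such that the cell in row $r$ and column $c_1$ contains a $1$.

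The finishing step is to pin down $r$. Since the cell in row $e_1$ and column $c_1$ is $0$ while the corresponding cell in row $r$ is $1$, we have $r\ne e_1$; together with $e_1$ being the smallest positive row label, this forces $r>e_1$. Also $r<c_1$, because row $r$ meets column $c_1$. By Corollary~\ref{invbij}, $r$ precedes $c_1$ in $\pi$, while by construction $r$ appears strictly after $c_2$'s block, so $\mathrm{pos}(c_2)<\mathrm{pos}(r)<\mathrm{pos}(c_1)<\mathrm{pos}(e_1)$. Reading off the values, $(r,c_1,e_1)$ at these three positions is a $231$-pattern with $e_1<r<c_1$, contradicting the hypothesis that $\pi$ avoids $231$.

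The main obstacle will be exactly this second case: the naive candidate $(c_1,c_2,e_1)$ gives the pattern $321$ rather than $231$ when $c_2$ precedes $c_1$, so a different witness must be extracted. Pulling the row $r$ out of the reading algorithm of Definition~\ref{phidef}, and then nailing down $e_1<r<c_1$ via the minimality of $e_1$ among positive row labels, is where the argument has its real content.
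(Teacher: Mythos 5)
Your proof is correct and rests on exactly the same ingredients as the paper's: the two $231$-witnesses you extract are the ones used there, namely $c_1,c_2,e_1$ when the columns are read in increasing order, and $r,c_1,e_1$ built from a $1$ lying in column $c_1$ in a lower row, combined with Corollary~\ref{invbij}, the reading order of Definition~\ref{phidef}, and the fact that the second-row label is the smallest positive row label. The only difference is organizational: the paper first shows that any column with a $0$ in the second row must be all-$0$ below the top row and then observes that two such columns are read in increasing order at the start of $\pi$, whereas you argue the contrapositive and split into cases according to the relative order of $c_1$ and $c_2$ in $\pi$.
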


\begin{proof}
If $\clt$ has a second row, we first show that any column with a $0$ in the second row of $\clt$ must have all entries equal to $0$ except in the top row. Let $i$ be the label of the second row, and $j$ the label of a column with a $0$ in the second row. Then $j$ appears before $i$ in the permutation $\pi$ by construction. Since $j>i$ and $\pi$ avoids $231$, this implies in particular that there cannot be any row label~$i'$ with $i' < j$ that appears before $j$ in $\pi$. Thus, all entries in the column labeled $j$ (other than the top row) must be a $0$, since these correspond to such labels $i'$, which must appear after $j$ in $\pi$.

Suppose now that there are two columns with labels $j<j'$ whose entries in the second row are both $0$. By the above, both columns must have all non-top entries equal to $0$. By construction, such columns are read in increasing order at the start of the permutation $\pi$. In particular, $j$ appears before $j'$, and both appear before $i$ (since their entries in the row labeled $i$ are $0$). But since $i<j<j'$, it follows that $j$, $j'$ and $i$ appearing in that order in $\pi$ form a $231$ pattern, which is forbidden.
\end{proof}
\begin{question}
  Proposition~\ref{pro:231_pattern} is not an equivalence; there are \ewts\ with a unique 0 in the second row whose corresponding permutations do not avoid 231, such as the $2\times2$ tableau with a single 0, in the bottom right corner, whose corresponding permutation is in fact 231 itself.  Is there some nice further condition on \ewts\ that would make this an equivalence?
\end{question}

\begin{proposition}\label{pro:213_pattern}
Let $\clt$ be a \ewt, and  $\pi=\newpmap(\clt)$ the corresponding permutation. Then the following are equivalent.
\begin{enumerate}
\item The permutation $\pi$ avoids the pattern $213$.
\item 	
  \begin{itemize}
  \item All rows of $\clt$ are of the form $0 \cdots 0 1 \cdots 1$, where the sequence of $1$s may be empty, and
  \item in any row of $\clt$, the leftmost $1$ has no $1$ in its column below it.
  \end{itemize}
\end{enumerate}
\end{proposition}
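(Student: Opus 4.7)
The strategy is to apply Corollary~\ref{invbij} to pass between cell values in $\clt$ and the order of labels in $\pi$, and then to argue via 213-patterns. Throughout I use the basic fact that any column of row $r$ has label $>r$, so for a row label $r$ and column label $c$ one has $r<c$ whenever the cell $(r,c)$ exists.

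For $(1)\Rightarrow(2)$ I argue contrapositively. If (a) fails in some row $r$, there are columns $c>c'$ of row $r$ with $(r,c)=1$ and $(r,c')=0$; by Corollary~\ref{invbij} the letters $c',r,c$ appear in $\pi$ in this order, and with $r<c'<c$ this is a 213-pattern. For (b), assume (a) holds and that $(r',c^*)=1$ for some $r'>r$, where $c^*$ is the column of the leftmost $1$ in row $r$. I plan to show $r'$ precedes $r$ in $\pi$; then $r',r,c^*$ is a 213-pattern since $r<r'<c^*$. The key step uses (a) to identify the $0$-columns of $r$ and $r'$ as precisely the columns of the respective rows whose labels exceed the leftmost-$1$ label. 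Since $(r',c^*)=1$ forces the leftmost-$1$ column of $r'$ to have label $\ge c^*$, and every column of row $r'$ is a column of row $r$ (as $r<r'$), the $0$-columns of $r'$ form a subset of those of $r$. Hence in the algorithm of Definition~\ref{phidef}, $r'$ becomes $0$-free no later than $r$: either in the same row block (where the decreasing order puts the larger $r'$ first) or in an earlier one.

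For $(2)\Rightarrow(1)$ I again argue contrapositively: any 213-pattern $v_1v_2v_3$ at positions $q_1<q_2<q_3$ with $v_2<v_1<v_3$ forces (a) or (b) to fail. By Lemma~\ref{desbot-lemma} column labels increase in $\pi$, so $v_2$ cannot be a column: otherwise $v_1$ would have to be a row, forcing the $0$-columns of row $v_1$ (all with labels $>v_1>v_2$) to appear before $q_1$, contradicting that every column before $q_2$ has label $<v_2$. So $v_2$ is a row. If $v_3$ is a column and $v_1$ is a column, then row $v_2$ has $(v_2,v_1)=0$ and $(v_2,v_3)=1$ with $v_1<v_3$, producing a $1$ left of a $0$ in row $v_2$ and violating (a). If $v_3$ is a column and $v_1$ is a row, then $(v_2,v_3)=1$ forces the leftmost-$1$ column $c^*$ of row $v_2$ to satisfy $c^*\ge v_3>v_1$, so cell $(v_1,c^*)$ exists and equals $1$ (as $v_1$ precedes $c^*$, which in turn lies after $v_2$), and $v_1>v_2$ then refutes (b) for row $v_2$.

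The remaining, and main, case is $v_3$ a row; the plan is to reduce it to the previous case via an auxiliary column. Since $v_2<v_3$ with $v_2$ preceding $v_3$, the two rows lie in different row blocks of $\pi$, so at least one column block sits strictly between $q_2$ and $q_3$; moreover $v_3$ is non-top and therefore has a $0$, so one of its $0$-columns (label $>v_3$) must appear at a position below $q_3$. Let $M$ be the largest column label occurring at a position $<q_3$; then $M>v_3$, and because column labels strictly increase in $\pi$, $M$ must sit in the column block immediately before $v_3$'s row block, hence at a position in $(q_2,q_3)$. The triple $v_1,v_2,M$ is then a 213-pattern with $M$ a column in the last slot, so the previously handled case forces (a) or (b) to fail.
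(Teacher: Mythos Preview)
Your proof is correct. Both you and the paper rely on Corollary~\ref{invbij} to translate between cell values and relative order in $\pi$, and both do a case analysis on which of the three letters in a $213$-pattern are row labels versus column labels. The organization differs mainly in the direction $(2)\Rightarrow(1)$. You observe directly, via Lemma~\ref{desbot-lemma}, that a column label is never preceded by a larger letter, so the ``$1$'' of any $213$-pattern must be a row label; this is sharper and quicker than the paper's Lemma~\ref{lem:213_pattern_1_is_row}, which only shows that \emph{some} $213$-pattern has this property, and proves it by replacing the smallest letter with a nearby row label. For the remaining cases, the paper splits according to whether the ``$2$'' is a row or a column and, when it is a row, argues directly from the structural hypotheses in~(2). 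You instead split on the type of the ``$3$'', and when it is a row you reduce to the already-handled column case by locating the largest column label $M$ before $v_3$ and noting that it must lie strictly between $v_2$ and $v_3$; this reduction is a clean device not present in the paper. Either route works; yours is slightly more uniform, while the paper's Case~3 exploits condition~(2) more explicitly.
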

\begin{proof}
We first show that $(1)$ implies $(2)$. Suppose $\pi$ avoids $213$. First,
fix some row of $\clt$, labeled $i$. Let $j$ be the label of the column of the leftmost $1$ in that row (if it exists). By construction we have $j>i$ and since the entry $(i,j)$ is a~$1$, $i$ appears before $j$ in $\pi$. Let $j'$ be the label of any column to the right of the column labeled $j$. Since $i<j'<j$ and $\pi$ avoids $213$, $j'$ must also appear after~$i$ in $\pi$, and thus the entry $(i,j')$ must be a $1$. This shows the first point of~$(2)$.

For the second point, we need to show that all entries in the column labeled~$j$ that are below the row labeled $i$ are $0$. Let $i'$ be the label of the row of such an entry. By construction we have $i<i'<j$. Moreover, recall that $i$ appears before $j$ in $\pi$. Since $\pi$ avoids $213$, $i'$ must appear after $i$ in $\pi$. Now by the above, the rows labeled $i$ and $i'$ are both of the form $0 \cdots 0 1 \cdots 1$. By construction, if the leftmost $1$ in the row $i'$ was to the left (or directly below) the leftmost $1$ in the row labeled $i$, then $i'$ would appear before $i$ in $\pi$, which is not the case. In particular, the entry directly below the leftmost $1$ in the row labeled $i$ must be a $0$, as desired.

We now show that $(2) \Rightarrow (1)$. Suppose that $\clt$ satisfies both points of $(2)$. We first show the following lemma.

\begin{lemma}\label{lem:213_pattern_1_is_row}
If $\pi$ contains a $213$-pattern, then it contains a $213$-pattern where the letter
corresponding to the $1$ of the pattern is the label of a row of $\clt$.
\end{lemma}
\begin{proof}
Let $b,a,c$ be the letters of an occurrence of 213 corresponding, respectively, to $2,1,3$.  Suppose $a$ is the label of a column (otherwise we are done).  The~$b$ cannot be the label of a column in the same block in the run-decomposition of~$\pi$, since such blocks are increasing. Let $a'$ be the  leftmost letter in the block of $a$, so $a'\le a$.  By Lemma~\ref{run-decomp}, the rightmost letter $x$ of the block of row labels preceding $a'$ is smaller than $a'$, and thus $b,x,c$ form the desired occurrence of~213.
\end{proof}

Seeking contradiction, suppose $\pi$ contains an occurrence of $213$, and write $i,j,k$ for the letters corresponding respectively to $1,2$ and $3$. By Lemma~\ref{lem:213_pattern_1_is_row}, we may assume that $i$ is the label of a row of $\clt$. We distinguish three cases, where $(i,j)$ refers to the cell in row $i$ and column $j$ of $\clt$.

\begin{itemize}
\item[Case 1.] Suppose $j$ and $k$ both label columns of $\clt$. Since $i<j<k$, row $i$ has entries in both those columns, and column $j$ is to the right of column $k$. Moreover, since $j,i,k$ appear in $\pi$ in that order, the entry in $(i,j)$ is a $0$ and the entry in $(i,k)$ is a $1$. This contradicts the fact that each row is of the form $0 \ldots 0 1 \ldots 1$.

\item[Case 2.] Suppose $j$ labels a column and $k$ labels a row of $\clt$. Since $i<j<k$, column $j$ must have an entry in row $i$ but not in row $k$. Since $j$ appears before $i$ in $\pi$, the entry in $(i,j)$ is a $0$. Because of the form of row $i$, all entries to the left of $(i,j)$ must also be $0$. In particular, all entries of row~$i$  in columns that also contain entries in row $k$ are $0$, which implies that $k$ appears before $i$ in $\pi$. This contradicts the assumption that $j,i,k$ appear in $\pi$ in that order. 

\item[Case 3.] Suppose $j$ labels a row of $\clt$. Because $i<j$, row $j$ is below row $i$ in $\clt$. 
Now the conditions of $(2)$, together with the fact that $j$ appears before $i$ in $\pi$, imply that the entry of all the columns in row $i$ that also have an entry in row $j$ are 0s. This implies that $k$ labels a row, since any column label greater than $j$ must therefore appear before $i$ in $\pi$. But $j<k$, so that all the columns in row $i$ that have an entry in row $j$ also have an entry in row $k$. Since these entries are all 0s, and $i$ appears before $k$ in $\pi$, this implies that the row $k$ is all-0. But such a row would be read before~$i$, since the column of the leftmost 1 of row $i$ has no entry in row $k$. This is the desired contradiction.
\end{itemize}
\kern-5.25ex
\end{proof}

\begin{definition}
  A row $R$ in a tableau $\clt$ \emph{dominates} a row $S$ in $\clt$ if $S$
lies above $R$ and there is no column in $\clt$ where $R$ has a $0$ and $S$ has a $1$.  A tableau~$\clt$ is \emph{domination-free} if no row of $\clt$ dominates another.
\end{definition}

\begin{definition}
  A tableau $\clt$ is \emph{top-justified} if all the 1s in each column are at the top, that is, no 1 has a 0 above it in the same column.
\end{definition}

\begin{lemma}\label{dom-top-just}
  A domination-free tableau $\clt$ is top-justified.
\end{lemma}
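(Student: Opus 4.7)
I will prove the contrapositive: if $\clt$ is not top-justified, then some row of $\clt$ dominates another, so $\clt$ is not domination-free. The key tool will be condition~\ref{ewt3} of Definition~\ref{def:ewt} (the ``forbidden rectangle'' condition, which also appears in Definition~\ref{def-new} for \newts).

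Suppose $\clt$ is not top-justified. Then there exists a column $c$ of $\clt$ containing a $1$ that has a $0$ above it in the same column. Pick any such $c$, and let $U$ be a row of $\clt$ containing a $0$ in column $c$ and $L$ a row of $\clt$ strictly below $U$ with a $1$ in column $c$, so $(U,c)=0$ and $(L,c)=1$. I claim that $L$ dominates $U$.

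Since $L$ lies below $U$ in the Ferrers diagram, every column in which $L$ has a cell is also a column in which $U$ has a cell, so the four cells $(U,c),(U,c'),(L,c),(L,c')$ form a rectangle for any column $c'$ of $L$. Suppose, for contradiction, that there is a column $c'$ (necessarily distinct from $c$) in which $L$ has a cell and for which $(L,c')=0$ while $(U,c')=1$. Then this rectangle has $0$s at the diagonally opposite corners $(U,c)$ and $(L,c')$ and $1$s at the other two corners $(U,c')$ and $(L,c)$, which directly violates condition~\ref{ewt3} of Definition~\ref{def:ewt}. Hence no such $c'$ exists, which is precisely the statement that $L$ dominates $U$, contradicting the assumption that $\clt$ is domination-free.

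I do not foresee any real obstacle here: the argument is essentially one line once the direction of the dominance relation is pinned down. The only mild subtlety is making sure to take the lower row $L$ as the dominator (it is the bigger row in the domination order, despite sitting below), and to pair it with an upper row $U$ that has a $0$ exactly where $L$ has a $1$; the rectangle condition then forces $L$ to have a $1$ at every column where $U$ has a $1$. The same proof works verbatim for \newts, since Definition~\ref{def-new} contains the same forbidden-rectangle condition.
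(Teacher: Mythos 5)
Your proof is correct and is essentially the paper's argument run in the contrapositive direction: both hinge on the observation that the forbidden-rectangle condition of Definitions~\ref{def:ewt} and~\ref{def-new} prevents a pair of rows from having both a column with $0$ above $1$ and a column with $1$ above $0$, so a violation of top-justification forces the lower row to dominate the upper one. You also correctly identify the lower row as the dominator and restrict attention to columns where it has cells, matching the paper's use of the definition.
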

\begin{proof}
Note first that no two rows of $\clt$ can have the same filling or the lower one would dominate the upper row. Given a row $u$ above a row $r$ in $\clt$, the filling of the cells in $r$ can of course be obtained from the filling of $u$ by changing some entries. We cannot only change some 0s to 1s, since then $r$ would dominate $u$, so we must change some 1s in $u$ to 0s in $r$. If we then change some 0s in $u$ to 1s in $r$ we would get the kind of pattern forbidden by the definition of \newts.  Therefore, any cell containing a $1$ must have only $1$s above it. 
\end{proof}

Recall the definition of a pattern in a permutation, given at the beginning of this section.  A permutation $\pi$ avoids the \emph{vincular pattern} 32--1 if $\pi$ has no decreasing subsequence of length 3 whose first two letters are adjacent.  For example, 4132 avoids 32--1, whereas 4231 does not, because of the subsequence 421. Recall also that a \emph{right-to-left minimum} of a permutation $\pi$ is a letter of $\pi$ that is smaller than all letters to its right.  For example, 3172546 has right-to-left minima $1,2,4,6$.

\begin{proposition}
  A domination-free  \newt\ $\cln$ of size $n$ has exactly $k$ columns containing a 0 if and only if the permutation $\pi=\newpmap(\cln)$ begins with 1, avoids the vincular pattern $32$--$1$ and has $(n-k)$ right-to-left minima.  Thus, such \newts\ are counted by \mbox{$S(n-1,n-1-k)$}, the Stirling number of the second kind, that is, the number of partitions of a set of $(n-1)$ elements into $(n-1-k)$ blocks.
\end{proposition}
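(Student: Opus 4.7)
The plan is to describe the permutation $\pi=\newpmap(\cln)$ explicitly for a domination-free $\cln$, verify the three listed properties, and invert using the classical bijection between $32$--$1$-avoiding permutations and set partitions. By Lemma~\ref{dom-top-just}, $\cln$ is top-justified, so its filling is encoded by the column sums $a_c$ (the number of $1$s at the top of column $c$) together with the shape. I will repeatedly use two elementary border-label facts: (a) the row labels, call them $r_1<r_2<\cdots<r_R$, appear in that increasing order from top to bottom; and (b) for every column label $c$, the number of row labels strictly less than $c$ equals the height $h_c$ of the column in the Ferrers shape, so in particular $h_c\geq i$ if and only if $c>r_i$. Both are immediate from the clockwise edge-by-edge labelling of the border.

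For the forward direction I would first note that in a top-justified tableau the top row is $0$-free, while domination-freeness between consecutive rows forces every lower row to contain a $0$; hence the top row (with label $r_1=1$) is the lowest $0$-free row, so $\pi$ begins with $1$. Running the algorithm of Definition~\ref{phidef} one row at a time then yields the alternating block form
\[
\pi=1,\;C_1^{\uparrow},\;r_2,\;C_2^{\uparrow},\;\ldots,\;r_R,\;C_R^{\uparrow},\qquad C_i:=\{c:a_c=i\},
\]
where each $C_i^{\uparrow}$ lists the set $C_i$ in increasing label order. Combining the block form with fact~(b), any descent of $\pi$ is of the type $\max(C_i)\to r_{i+1}$, the descent bottom is $r_{i+1}$, and every letter appearing after $r_{i+1}$ exceeds it. This simultaneously precludes a $32$--$1$ pattern and makes each $r_i$ a right-to-left minimum. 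A parallel check shows that a column label $c\in C_i$ is a right-to-left minimum exactly when $h_c=a_c$, i.e.\ when column $c$ is all $1$s. Hence the number of right-to-left minima equals $R+(C-k)=n-k$, where $C$ denotes the number of columns.

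For the converse I would start from such a $\pi$ and invoke the classical bijection between $32$--$1$-avoiding permutations of $[n]$ and set partitions of $[n]$: list the blocks $B_1,\ldots,B_m$ in order of their increasing minima $m_1<\cdots<m_m$, and obtain $\pi$ by concatenating, for $j=1,\ldots,m$, the word ``$(B_j\setminus\{m_j\})$ in increasing order, then $m_j$.'' The condition that $\pi$ begins with $1$ forces $B_1=\{1\}$; the descent bottoms of $\pi$ are exactly the minima of the non-singleton blocks, so by Theorem~\ref{props-new} the row labels of $\cln=\newpmap^{-1}(\pi)$ are $\{m_1\}\cup\{m_j:|B_j|\geq 2\}$, and in increasing value they also appear in $\pi$ in that order. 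Corollary~\ref{invbij} then delivers top-justification at once: if $r_1<r_2$ are row labels and $r_2$ precedes a column label $c$ in $\pi$, so does $r_1$. For domination-freeness I would exhibit, for each adjacent pair $r_i,r_{i+1}$, a column $c$ with $a_c=i$ and $h_c\geq i+1$; the non-minimum elements of the (necessarily non-singleton) block with minimum $r_{i+1}$ lie in $\pi$ strictly between $r_i$ and $r_{i+1}$, and each one exceeds $r_{i+1}$, so fact~(b) yields $h_c\geq i+1$. The value $k$ for the number of $0$-containing columns is then forced by inverting the right-to-left-minimum count from the forward direction.

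The main obstacle will be the border-label bookkeeping underpinning fact~(b) and the careful extraction of the block-concatenation form of $\pi$ from Definition~\ref{phidef}; once those are in place, the Stirling count is immediate, since permutations of $[n]$ beginning with $1$, avoiding $32$--$1$, and having $m=n-k$ right-to-left minima biject with set partitions of $\{2,\ldots,n\}$ into $m-1=n-k-1$ blocks, counted by $S(n-1,n-1-k)$.
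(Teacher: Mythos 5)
Your argument is correct, and its forward half is in essence the paper's own: via Lemma~\ref{dom-top-just} the tableau is top-justified, domination-freeness forces the algorithm of Definition~\ref{phidef} to empty exactly one row per pass, the row labels together with the all-$1$ columns account for all right-to-left minima, and since only row labels can be descent bottoms the permutation avoids $32$--$1$. What you do differently is to make this procedural argument structural: you write down the explicit block form $\pi=1\,C_1^{\uparrow}\,r_2\,C_2^{\uparrow}\cdots$ with $C_i=\{c:a_c=i\}$ (the paper never states it), and you run both the converse and the enumeration through the explicit Claesson-type bijection between $32$--$1$-avoiding permutations and set partitions, whereas the paper argues the converse directly (for row labels $s<r$ it uses the column label immediately preceding $r$ in $\pi$ as the witness that $r$ does not dominate $s$) and cites \cite{cla-gpa} for the Stirling count. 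Your route buys a self-contained count and a transparent description of $\newpmap$ on domination-free \newts; the paper's is shorter because it avoids the canonical form. Two spots must be written out in full: (i) the block form hinges on there being exactly one $0$-free row at every stage, and this is where domination-freeness is used beyond the initial step (a lower $0$-free row $r_j$ after deleting the columns with $a_c\le i-1$ would dominate $r_i$) -- this is the heart of the proof, not mere bookkeeping; (ii) checking only adjacent pairs is not in general enough for domination-freeness, but your witnesses suffice because they have $a_c=i$ exactly, hence a $1$ in every row above $r_{i+1}$ and a $0$ in $r_{i+1}$; say so explicitly, and note that identifying the row labels of $\newpmap^{-1}(\pi)$ as $\{1\}\cup\{\text{descent bottoms}\}$ also needs the fact from Lemma~\ref{desbot-lemma} that column labels are never descent bottoms.
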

\begin{proof}
  By Lemma~\ref{dom-top-just}, $\cln$ is top-justified, so every column has a cell that contains a $1$ with only $1$s above and no 1s below, and we call such a cell a \emph{transition cell} if it has some 0 below it (that is, is not the bottom cell in its column).  This implies the top row is all $1$s and that every non-bottom row must contain a transition cell, since a 1 must be lost in some column going from a row to the next one below.  We let $\ell$ denote the number of rows and $z$ the number of columns with no zeros, so $n-k=\ell+z$.

First we show the forward direction. Given a \newt\ $\clt$ with no dominating rows, the first letter of $\pi=\newpmap(\clt)$ must be a $1$ because  only the top row is all $1$s. Next note that on a pass from bottom to top we will never find two rows that have no $0$s, since otherwise the lower row would dominate the upper one. Also note that a row cannot be only $1$s unless every row above is only $1$s, which means all the rows above must already have been emptied. Moreover, if all rows above are empty, then all columns with a smaller label must be empty as well. So on each pass from bottom to top we encounter exactly one row that we can empty, which gives a letter that is a right-to-left minimum. As these are the only elements that can be the bottom of a descent this implies that $\pi$ avoids $32$--$1$. So every row contributes a right-to-left minimum.  
In addition, the label of an all-$1$ column appears in the block immediately after the label of its bottom row in the run-decomposition of $\pi$, and since column labels are read in increasing order, the all-$1$ columns contribute $z$ extra right-to-left minima (see the end of the previous paragraph).

For the other direction, suppose $\pi=\newpmap(\clt)$ avoids $32$--$1$, has $t$ 
right-to-left minima and begins with $1$. Let $s,r$ be two row labels of $\clt$ such that row $s$ is above row $r$. Then $r \neq 1$ so $r$ is not the first letter of $\pi$ and is therefore a descent bottom by point 2 of Theorem~\ref{props-new}. Since $\pi$ avoids $32$--$1$, this implies in particular that $r$ must appear after $s$ in $\pi$. Let $c$ be the column label immediately preceding $r$. By construction, $c > r > s$ and $s,c,r$ appear in $\pi$ in that order, so the column $c$ has a $0$ in row $r$ and a $1$ in row $s$, so that row $r$ does not dominate row $s$. Thus $\clt$ is domination-free. Moreover, we have shown that the labels of rows appear in $\pi$ in order from top to bottom, so that each is a right-to-left minimum of $\pi$. As above, any all-$1$ column is also a right-to-left minimum. Finally, a column $c$ containing a $0$ in row $r$ must be read before $r$ in $\pi$, and since $c>r$, $c$ is not a left-to-right minimum. Thus the number of columns containing a $0$ is $n-t$ as desired.

Finally, the fact that the permutations described in the statement of the theorem are counted by the Stirling numbers of the second kind was shown in~\cite[Porism~1]{cla-gpa}.
\end{proof}

\begin{proposition}\label{top-ones}
There is a bijection between top-justified \newts\ of size $n$ with~$k$ rows
and set partitions of $[n]=\{1,\ldots,n\}$ with $k$ blocks.  The number of such \newts\ thus equals $S(n,k)$, the Stirling number of the second kind.  
\end{proposition}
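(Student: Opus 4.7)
My plan is to encode a top-justified \newt\ of size $n$ with $k$ rows by a pair of sequences, and then exhibit an explicit bijection to set partitions of $[n]$ into $k$ blocks.

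First I would observe that, in any top-justified filling, each column consists of a block of $1$s followed by a block of $0$s, so the tableau is completely determined by (i) its column lengths (reading left to right), which form a weakly decreasing sequence $k\ge\lambda_1\ge\lambda_2\ge\cdots\ge\lambda_{n-k}\ge 1$, and (ii) the row $r_c\in\{1,\ldots,\lambda_c\}$ containing the bottommost $1$ in column $c$. Condition~(\ref{newt2}) of Definition~\ref{def-new} is automatic here: if $(i_1,j)$ holds a $0$, then every cell $(i_2,j)$ with $i_2>i_1$ in the diagram also holds a $0$, which rules out both forbidden rectangle patterns.

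Next I would define a map $\phi$ from set partitions to such pairs $(\lambda,r)$. Given a partition of $[n]$ with $k$ blocks, order them by their minima so that $B_h$ has minimum $m_h$ with $1=m_1<m_2<\cdots<m_k$, and list the non-minima in strictly decreasing order $n_1>n_2>\cdots>n_{n-k}$. Set
\[
\lambda_c \;:=\; |\{h : m_h\le n_c\}|, \qquad r_c \;:=\; \text{the unique $h$ with $n_c\in B_h$.}
\]
Then $\lambda_c$ is weakly decreasing in $c$ (since $n_c$ is strictly decreasing), $1\le\lambda_c\le k$ (because $m_1=1\le n_c$ and there are only $k$ minima), and $n_c\in B_{r_c}$ gives $m_{r_c}\le n_c$, forcing $r_c\le\lambda_c$. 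So $\phi(\pi)$ is a valid top-justified \newt.

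For the inverse, given $(\lambda,r)$, set $\mu_h:=|\{c:\lambda_c=h\}|$, place the minima at positions $m_h:=h+\mu_1+\cdots+\mu_{h-1}$, fill the remaining $n-k$ positions with the non-minima, and assign the $c$-th largest non-minimum to block $B_{r_c}$. A check using that the non-minima in each range $(m_h,m_{h+1})$ are exactly the $\mu_h$ elements of height $h$ shows that $\phi$ and this map are mutually inverse, yielding the asserted bijection and hence the count $S(n,k)$. The main obstacle is bookkeeping around orderings: the non-minima must be listed in \emph{decreasing} value order so that the heights of $n_1,\ldots,n_{n-k}$ reproduce the weakly decreasing column-length sequence $\lambda$; once this convention is fixed, every verification is immediate.
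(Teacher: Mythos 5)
Your proposal is correct and amounts to the paper's own bijection written in positional coordinates: identifying the $c$-th column from the left with the $c$-th largest column label and the block minima with the row labels, your rule ``$r_c=$ block of $n_c$'' is exactly the paper's map sending a tableau to the partition whose blocks are a row label together with the column labels whose bottommost $1$ lies in that row. The only difference is presentational — your intermediate encoding $(\lambda,r)$ and the formula $m_h=h+\mu_1+\cdots+\mu_{h-1}$ make explicit the inverse-check that the paper leaves as ``easy to see.''
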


\begin{proof}
Let $\mathsf{N}(n,k)$ be the set of top-justified \newts\ of size $n$ with $k$ rows, and $\mathsf{P}(n,k)$ the set of set partitions of $[n]$ with $k$ blocks. Given $\cln \in \mathsf{N}(n,k)$, we construct a set partition $F(\cln)=\clp_1,\ldots,\clp_k \in \mathsf{P}(n,k)$ as follows. 
Let $r_1,\ldots,r_k$ be the row labels of $\cln$. For all
$j\in\{1,\ldots,k\}$, we define $\clp_j$ to be the set consisting of $r_j$ and of all column labels $c$ such that the bottommost 1 of the column labeled $c$ is in the row labeled $r_j$. We claim that $F: \mathsf{N}(n,k) \rightarrow \mathsf{P}(n,k)$ is a bijection.

Each column has exactly one bottommost 1, so that each column label belongs to exactly one block of $\clp$, as does each row label by construction. Thus $\clp$ is a set partition of $[n]$ with $k$ blocks, and $F$ is well defined. To see that $F$ is a bijection, we construct its inverse. 

Let $\clp=\clp_1,\ldots,\clp_k\in\mathsf{P}(n,k)$. For all $j\in\{1,\ldots,k\}$, let $r_j$ be the minimum element of the block $\clp_j$. Let $\clf$ be the Ferrers diagram whose row labels are $r_1,\ldots,r_k$.  We define a filling $\cln=G(\clp)$ of $\clf$ as follows.
Any element $c\not=r_j$ in block $\clp_j$ is greater than $r_j$ and labels a column of $\clf$, and thus row $r_j$ has a cell in column~$c$.  Fill that cell and all the cells above it in column $c$ with 1s, and all the cells below that cell in column $c$ with a 0.  This clearly defines a filling where every column has a 1 and no 1 has a 0 below it in the same column.

Moreover, no such filling of a Ferrers diagram can contain a rectangle violating condition~2 of Definition~\ref{def-new}, since there cannot be a 0 above a 1 in a column. Thus $\cln \in \mathsf{N}(n,k)$ as desired, and it is easy to see that the maps $F$ and $G$ are inverses of each other.
\end{proof}

\begin{proposition}\label{pro:231-newts}
  If the permutation $\pi=\newpmap(\cln)$ for a \newt\ $\cln$ 
avoids the pattern 231 then $\cln$ has no 0 above a 1 in a column.
\end{proposition}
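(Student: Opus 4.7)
The plan is to prove the contrapositive: assuming that some column of $\cln$ contains a $0$ lying above a $1$, I will exhibit an occurrence of the pattern $231$ in $\pi=\newpmap(\cln)$. So let $c$ be the label of a column and let $r_1,r_2$ be row labels with row $r_1$ lying strictly above row $r_2$ in $\cln$, such that the cell $(r_1,c)$ contains a $0$ and $(r_2,c)$ contains a $1$.

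First I would record the label inequality $r_1 < r_2 < c$. The border-labeling convention assigns labels in increasing order as one travels down the right border and then left along the bottom, so rows higher up have smaller labels, giving $r_1 < r_2$; and for any cell in the Ferrers diagram the row label is encountered on the border strictly before the column label, yielding $r_2 < c$.

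Then I would invoke Theorem~\ref{props-new}(3), the NEW-tableau analogue of Corollary~\ref{invbij}: the entry in cell $(r,c)$ equals $1$ precisely when $r$ precedes $c$ in $\pi$. From $(r_2,c)=1$ this gives $r_2$ before $c$ in $\pi$, and from $(r_1,c)=0$ it gives $c$ before $r_1$ in $\pi$. Thus the three letters appear in $\pi$ in the order $r_2,c,r_1$ with values satisfying $r_1 < r_2 < c$, which is exactly the pattern $231$, contradicting the hypothesis. There is no real obstacle: the whole argument is a one-line application of the tableau--permutation dictionary of Theorem~\ref{props-new} combined with the elementary label inequality $r < c$ for cells $(r,c)$ of a Ferrers diagram.
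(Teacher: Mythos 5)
Your proposal is correct and is essentially identical to the paper's own argument: both take a column with a $0$ in a higher row $r_1$ above a $1$ in a lower row $r_2$, note the label inequalities $r_1<r_2<c$, and apply point~3 of Theorem~\ref{props-new} to conclude that $r_2,c,r_1$ occur in that order in $\pi$, forming a 231 pattern. No differences worth noting beyond your phrasing it as a contrapositive rather than a direct contradiction.
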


\begin{proof}
Suppose $\cln$ has a column labeled $k$ that has a 0 in a row labeled $i$ above a~1 in a row labeled $j$. By construction $i<j<k$, and from  point~3 of Theorem~\ref{props-new} the labels $j,k,i$ appear in that order in $\pi$, which forms a 231 pattern.
\end{proof}

Propositions~\ref{top-ones} and \ref{pro:231-newts} suggest the following problem, since $n$-permutations avoiding 231 are equinumerous with non-crossing\footnote{A set partition has a crossing if it has a block containing $a$ and $c$ and another block containing $b$ and $d$, where $a<b<c<d$.} set partitions of $[n]$ (both counted by the Catalan numbers):
\begin{problem}
Find a bijection from the \newts\ in Proposition~\ref{top-ones} to set partitions, such that the set partition corresponding to a \newt\ $\cln$ is non-crossing if and only if the permutation  $\pi=\newpmap(\cln)$ avoids 231.
\end{problem}

Note that the bijection $F$ from the proof of Proposition~\ref{top-ones} does not solve this problem. Indeed, consider the \newt\ $\cln$ with two rows and two columns, whose entries read by rows are $[1,1],[1,0]$. We have $\cln \in \mathsf{N}(4,2)$. However, the permutation $\pi=\newpmap(\cln)$ is given by $1324$, which avoids 231, whereas the set partition $\clp=F(\cln)$ is given by $\{1,3\},\{2,4\}$, which has a crossing.

A \emph{big descent} in a permutation $\pi=a_1a_2\ldots a_n$ is an index $i$ such that $a_i\ge a_{i+1}+2$.

\begin{proposition}\label{pro:newts-0-column}
  The number of \newts\ of size $n$ with $k$ columns containing a 0 equals the number of $n$-permutations with $k$ big descents.
\end{proposition}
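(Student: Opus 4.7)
The plan is to express both the statistic $\text{cols-with-0}$ on \newts\ and $\text{bigdes}$ on $S_n$ as shifted versions of excedance/anti-excedance counts (with a common correction term for a fixed point at position~$1$), and then match the two via the inversion involution on $S_n$.

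First, I would unpack $\text{cols-with-0}$. Let $\cln$ be a \newt\ of size $n$, set $\pi = \newpmap(\cln) = a_1 \cdots a_n$, and let $\sigma = \desexc(\pi)$. By Theorem~\ref{props-new} the row labels of $\cln$ are $a_1$ together with the descent bottoms of $\pi$, so the number of columns of $\cln$ is $n - 1 - \text{des}(\pi)$. Proposition~\ref{ew-fixed} puts all-$1$ columns in bijection with fixed points of $\sigma$, with the single caveat (as noted in the proof of Proposition~\ref{all0rows}) that for a \newt\ the fixed point at position~$1$ of $\sigma$ corresponds to the row label~$1$ rather than a column, precisely when $a_1 = 1$. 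Hence the number of all-$1$ columns equals $\text{fix}(\sigma) - [\sigma(1) = 1]$. Using $\text{des}(\pi) = \text{exc}(\sigma)$ (Proposition~\ref{de-bij}) and $n = \text{exc}(\sigma) + \text{fix}(\sigma) + \text{antiexc}(\sigma)$, the count simplifies to
\begin{equation*}
\text{cols-with-0}(\cln) = \text{antiexc}(\sigma) - 1 + [\sigma(1) = 1].
\end{equation*}

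Next, I would rewrite $\text{bigdes}(\pi)$ in parallel terms via the cyclic right shift $\tau = \cycr(\sigma)$, defined by $\tau(1) = \sigma(n)$ and $\tau(i+1) = \sigma(i)$ for $i \in [n-1]$. The bijection $\desexc$ sends a small descent $a_i = a_{i+1} + 1$ in $\pi$ to a minimal excedance $\sigma(a_{i+1}) = a_{i+1} + 1$ (see the proof of Lemma~\ref{fix-adj}), so $\text{bigdes}(\pi) = \text{exc}(\sigma) - \text{minexc}(\sigma)$. A direct position-by-position check shows that excedances of $\sigma$ correspond to weak excedances of $\tau$ at positions $\ge 2$, and minimal excedances of $\sigma$ correspond to fixed points of $\tau$ at positions $\ge 2$; this yields $\text{exc}(\sigma) = \text{exc}(\tau) + \text{fix}(\tau) - 1$ and $\text{minexc}(\sigma) = \text{fix}(\tau) - [\tau(1) = 1]$, so
\begin{equation*}
\text{bigdes}(\pi) = \text{exc}(\tau) - 1 + [\tau(1) = 1].
\end{equation*}

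The conclusion will then follow from a symmetry argument: since $\newpmap$, $\desexc$, and $\cycr$ are all bijections, it suffices to show that the joint statistics $(\text{antiexc}(\rho), [\rho(1) = 1])$ and $(\text{exc}(\rho), [\rho(1) = 1])$ have the same distribution on $\rho \in S_n$. This is immediate from the involution $\rho \mapsto \rho^{-1}$, which swaps excedances and anti-excedances and preserves $[\rho(1) = 1]$ (since $\rho(1) = 1 \Leftrightarrow \rho^{-1}(1) = 1$). The main obstacle will be bookkeeping the $[\sigma(1) = 1]$ correction carefully, in particular in adapting Proposition~\ref{ew-fixed} to \newts\ where the first letter of the associated permutation is always a row label rather than a column label.
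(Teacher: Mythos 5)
Your proposal is correct, but it takes a genuinely different route from the paper. The paper's proof stays at the level of equidistribution: it uses the reflection-and-complement construction at the start of Section~\ref{sec-newt} to identify \newts\ of size $n$ having $k$ columns with a $0$ with \ewts\ of size $n$ having $k$ non-top rows containing a $1$, and then invokes Proposition~\ref{all0rows} (all-zero rows equidistributed with decreasing adjacencies, i.e.\ with the descents that are not big) to match the latter statistic with big descents. You instead work directly on the \newt\ side and make everything explicit: writing $\pi=\newpmap(\cln)$, $\sigma=\desexc(\pi)$, you compute the statistic as $\mathrm{antiexc}(\sigma)-1+[\sigma(1)=1]$, using that the rows of $\cln$ are $a_1$ together with the descent bottoms of $\pi$ and that all-$1$ columns are the fixed points of $\sigma$ other than a possible fixed point at $1$; in parallel you compute $\mathrm{bigdes}(\pi)=\mathrm{exc}(\cycr(\sigma))-1+[\cycr(\sigma)(1)=1]$ via the minimal-excedance observation from Lemma~\ref{fix-adj}; and you finish with the involution $\rho\mapsto\rho^{-1}$, which swaps excedances with anti-excedances and fixes the indicator $[\rho(1)=1]$. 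Both arguments rest on the same underlying machinery ($\ewpmap$, $\desexc$, the fixed-point/all-$1$-column correspondence), but your closing symmetry argument replaces the paper's detour through \ewts\ and Proposition~\ref{all0rows}, and it has the advantage of tracking the joint bookkeeping explicitly (the paper's deduction from the marginal statement of Proposition~\ref{all0rows} to the ``rows containing a $1$ versus big descents'' claim is left implicit), at the cost of more computation. Two small points of care: Theorem~\ref{props-new} literally states only that non-first row labels are descent bottoms, while your column count also needs the converse (no column label is a descent bottom), which follows from the argument of Lemma~\ref{desbot-lemma} as used in the proof of Theorem~\ref{props-new}; and your correction term $[\sigma(1)=1]$ is exactly the caveat to Proposition~\ref{ew-fixed} for \newts\ noted in the proof of Proposition~\ref{all0rows}, which you handle correctly.
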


\begin{proof}
Proposition~\ref{all0rows} implies that the distribution of the number of rows containing at least one 1 in \ewts\ of size $n$ is the same as the distribution of big descents on $n$-permutations, because decreasing adjacencies are precisely those descents that aren't big. But in the construction at the beginning of Section~\ref{sec-newt}, \newts\ of size $n$ with $k$ columns containing a 0 correspond exactly to \ewts\ of size $n$ with $k$ rows containing a 1, which proves the claim.
\end{proof}

It follows from Theorem~\ref{numtab} (and already from \cite[Cor.~4.5]{ew-ferrers}) that \ewts\ of size $n$ with 
$k+1$ rows are equinumerous with $n$-permutations with~$k$ excedances.  It follows, of course, that the number of such tableaux with $k$ columns equals the number of $n$-permutations with $k$ non-excedances.  We have however not been able to prove the following innocent-looking conjecture, which we have verified for $n\le12$.
\begin{conjecture}
  The number of \ewts\ of size $n$ with $k$ columns containing a 0 equals the Eulerian number $A(n,k)$ counting $n$-permutations with $k$ excedances.
\end{conjecture}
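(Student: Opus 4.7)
The plan is to translate the conjecture about \ewts\ of size $n$ into a statement about $n$-permutations via the composition $\desexc\circ\ewpmap$, and then verify it by a short classical argument.

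First I would compute the number of columns of $\clt$ containing a $0$ in terms of the permutation $\sigma:=\desexc(\ewpmap(\clt))$. By Proposition~\ref{prop-rows-desbot} the non-top rows of $\clt$ are in bijection with the excedance bottoms of $\sigma$, so $\clt$ has exactly $1+\mathrm{exc}(\sigma)$ rows; since $\text{size}=\text{rows}+\text{cols}-1$, the tableau $\clt$ has $n-\mathrm{exc}(\sigma)$ columns in total. By Proposition~\ref{ew-fixed} the number of all-$1$ columns of $\clt$ equals the number of fixed points $\mathrm{fp}(\sigma)$ of $\sigma$, so
\[
\#\{\text{columns of }\clt\text{ containing a }0\}
\;=\;n-\mathrm{exc}(\sigma)-\mathrm{fp}(\sigma)
\;=\;n-\mathrm{wexc}(\sigma),
\]
where $\mathrm{wexc}(\sigma):=|\{i:\sigma(i)\ge i\}|$ is the number of weak excedances of $\sigma$. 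Since $\desexc\circ\ewpmap$ is a bijection from \ewts\ of size $n$ to $\clsn$ (by Proposition~\ref{prop-bij} and the bijectivity of $\desexc$), the conjecture reduces to the purely permutation-theoretic identity
\[
\#\{\sigma\in\clsn:\mathrm{wexc}(\sigma)=n-k\}\;=\;A(n,k).
\]

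To establish this identity I would exhibit the classical reversal--complement involution $\rho:\clsn\to\clsn$ defined by $\rho(\sigma)(i):=n+1-\sigma(n+1-i)$. A direct check gives
\[
\rho(\sigma)(i)>i \iff \sigma(n+1-i)<n+1-i,
\]
so via the substitution $j=n+1-i$ the excedances of $\rho(\sigma)$ correspond bijectively to the ``strict sub-excedances'' $\{j:\sigma(j)<j\}$ of $\sigma$; in particular $\mathrm{exc}(\rho(\sigma))=n-\mathrm{wexc}(\sigma)$. Since $\rho$ is a bijection on $\clsn$ this yields
\[
\#\{\sigma\in\clsn:n-\mathrm{wexc}(\sigma)=k\}
\;=\;\#\{\tau\in\clsn:\mathrm{exc}(\tau)=k\}
\;=\;A(n,k),
\]
which, combined with the reduction of the first step, proves the conjecture.

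The only mildly delicate point is the first step: one must assemble Propositions~\ref{prop-rows-desbot} and~\ref{ew-fixed} together with the shape identity $\text{rows}+\text{cols}=n+1$ to recognise that the column-with-$0$ statistic on $\clt$ collapses cleanly to $n-\mathrm{wexc}(\sigma)$ on the permutation side. Once this reduction is in hand there is no real obstacle, since the Eulerian equidistribution of excedances and strict sub-excedances is an immediate consequence of the involution $\rho$. In effect, the authors' own preceding results on rows and on fixed points already pin down the column-with-$0$ statistic up to a known Eulerian distribution.
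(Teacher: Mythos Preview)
The paper does \emph{not} prove this statement: it is explicitly presented as an open conjecture that the authors ``have not been able to prove'' but have verified computationally for $n\le 12$. Your argument, by contrast, is correct and settles the conjecture.

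The key observation --- which the authors apparently overlooked --- is that their own Propositions~\ref{prop-rows-desbot} and~\ref{ew-fixed} already determine, for each individual \ewt\ $\clt$, both the total number of columns and the number of all-$1$ columns in terms of $\sigma=\desexc(\ewpmap(\clt))$: the former is $n-\mathrm{exc}(\sigma)$ (since the non-top row labels are exactly the excedance bottoms of $\sigma$, and $\text{rows}+\text{cols}=n+1$), and the latter is $\mathrm{fp}(\sigma)$. Subtracting gives the columns-with-a-$0$ statistic as $n-\mathrm{exc}(\sigma)-\mathrm{fp}(\sigma)=n-\mathrm{wexc}(\sigma)$, exactly as you write. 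The remaining equidistribution $\#\{\sigma\in\clsn:\mathrm{wexc}(\sigma)=n-k\}=A(n,k)$ via the reversal--complement involution $\rho(\sigma)(i)=n+1-\sigma(n+1-i)$ is classical, and your verification that $\mathrm{exc}(\rho(\sigma))=n-\mathrm{wexc}(\sigma)$ is correct.

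There is therefore nothing in the paper to compare your proof against; your argument stands on its own, assembled entirely from results the authors had already established in Sections~\ref{sect-bij-ew-perm} and~\ref{sect-stats}.
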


\bibliographystyle{abbrv}
\bibliography{ewtableaux}

\end{document}